\newtheorem{theorem}{Theorem}
\newtheorem{corollary}[theorem]{Corollary}
\newtheorem{definition}[theorem]{Definition}
\newtheorem{lemma}[theorem]{Lemma}
\newtheorem{proposition}[theorem]{Proposition}
\newenvironment{proof}[1][Proof]{\noindent\textbf{#1.} }{\ \rule{0.5em}{0.5em}}
\begin{document}

\date{}
\title{A Bernstein-type inequality for functions of bounded interaction}
\author{Andreas Maurer \\
Adalbertstr. 55, D-80799 Munich, Germany\\
am"at"andreas-maurer.eu}
\maketitle

\begin{abstract}
We give a distribution-dependent concentration inequality for functions of
independent variables. The result extends Bernstein's inequality from sums
to more general functions, whose variation in any argument does not depend
too much on the other arguments. Applications sharpen existing bounds for
U-statistics and the generalization error of regularized least squares.
\end{abstract}

\section{Introduction}

If $X_{1},...,X_{n}$ are independent real random variables, with $%
X_{k}-EX_{k}\leq 1$ almost surely, and $f\left( X_{1},...,X_{n}\right)
=\sum_{k}X_{k}$, then Bernstein's inequality \cite{Bernstein 1927} asserts
that for $t>0$%
\begin{equation*}
\Pr \left\{ f\left( X_{1},...,X_{n}\right) -E\left[ f\left(
X_{1},...,X_{n}\right) \right] >t\right\} \leq \exp \left( \frac{-t^{2}}{%
2\sum_{k}\sigma _{k}^{2}+2t/3}\right) ,
\end{equation*}%
where $\sigma _{k}^{2}$ is the respective variance of $X_{k}$. In this work
we extend Bernstein's inequality to more general functions $f$.

This extension requires two modifications. First the variance $%
\sum_{k}\sigma _{k}^{2}$ is replaced by the Efron-Stein upper bound, or
jackknife estimate, of the variance. Secondly a correction term $J\left(
f\right) $ is added to the coefficient $2/3$ of $t$ in the denominator of
the exponent. This correction term, which we call the interaction functional
of $f$, vanishes for sums and represents the extent to which the variation
of $f$ in any given argument depends on other arguments.

To proceed we introduce some notation and conventions. Let $\Omega
=\prod_{k=1}^{n}\Omega _{k}$ be some product of measurable spaces and let $%
\mathcal{A}\left( \Omega \right) $ be the algebra of all bounded, measurable
real valued functions on $\Omega $. For fixed $k\in \left\{ 1,...,n\right\} $
and $y,y^{\prime }\in \Omega _{k}$ define the substitution operator $%
S_{y}^{k}$ and the difference operator $D_{y,y^{\prime }}^{k}$ on $\mathcal{A%
}\left( \Omega \right) $ by 
\begin{equation*}
\left( S_{y}^{k}f\right) \left( x_{1},...,x_{n}\right) =f\left(
x_{1},...,x_{k-1},y,x_{k+1},...,x_{n}\right)
\end{equation*}%
and $D_{y,y^{\prime }}^{k}=S_{y}^{k}-S_{y^{\prime }}^{k}$. Both $S_{y}^{k}f$
and $D_{y,y^{\prime }}^{k}f$ are independent of $x_{k}$.

Let a probability measure $\mu _{k}$ be given on each $\Omega _{k}$ and let $%
\mu $ be the product measure $\mu =\prod \mu _{k}$ on $\Omega $. For $f\in 
\mathcal{A}\left( \Omega \right) $ the expectation $Ef$ and variance $\sigma
^{2}\left( f\right) $ are defined as $Ef=\int_{\Omega }fd\mu $ and $\sigma
^{2}\left( f\right) =E\left[ \left( f-Ef\right) ^{2}\right] $. For $k\in
\left\{ 1,...,n\right\} $ the conditional expectation $E_{k}$ and the
conditional variance $\sigma _{k}^{2}$ are operators on $\mathcal{A}\left(
\Omega \right) $, which act on a function $f\in \mathcal{A}\left( \Omega
\right) $ as%
\begin{eqnarray*}
E_{k}f &=&E_{y\sim \mu _{k}}\left[ S_{y}^{k}f\right] =\int_{\Omega
_{k}}S_{y}^{k}f~d\mu _{k}\left( y\right) \text{ and} \\
\sigma _{k}^{2}\left( f\right)  &=&E_{k}\left[ \left( f-E_{k}f\right) ^{2}%
\right] =\frac{1}{2}E_{\left( y,y^{\prime }\right) \sim \mu _{k}^{2}}\left[
\left( D_{y,y^{\prime }}^{k}f\right) ^{2}\right] ,
\end{eqnarray*}%
where $\mu _{k}^{2}$ is the product measure $\mu _{k}\times \mu _{k}$ on $%
\Omega _{k}\times \Omega _{k}$. The sum of conditional variances (SCV)
operator $\Sigma ^{2}\left( f\right) :\mathcal{A}\left( \Omega \right)
\rightarrow \mathcal{A}\left( \Omega \right) $ is defined as%
\begin{equation*}
\Sigma ^{2}\left( f\right) =\sum_{k=1}^{n}\sigma _{k}^{2}\left( f\right) .
\end{equation*}%
This operator appears in the Efron-Stein inequality (\cite{Efron 1981},\cite%
{Steele 1986}, see also Section \ref{SubSection Houdre bound}) as%
\begin{equation*}
\sigma ^{2}\left( f\right) \leq E\left[ \Sigma ^{2}\left( f\right) \right] ,
\end{equation*}%
which becomes an equality if $f$ is a sum of real valued functions $X_{k}$
on $\Omega _{k}$. It also appears in the following exponential tail bound
(see McDiarmid \cite{McDiarmid 1998}, Theorem 3.8, or \cite{Maurer 2012},
Theorem 11).

\begin{theorem}
\label{Theorem Bernstein with sup norm}Suppose that $f\in \mathcal{A}\left(
\Omega \right) $ satisfies $f-E_{k}f\leq b$ for all $k\in \left\{
1,...,n\right\} $. Then%
\begin{equation*}
\Pr \left\{ f-Ef>t\right\} \leq \exp \left( \frac{-t^{2}}{2\sup_{\mathbf{%
x\in }\Omega }\Sigma ^{2}\left( f\right) \left( \mathbf{x}\right) +2bt/3}%
\right) .
\end{equation*}
\end{theorem}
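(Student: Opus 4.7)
The plan is to apply the Cram\'er--Chernoff method. Setting $Z = f(X_{1}, \ldots, X_{n})$ and $v^{\ast} := \sup_{\mathbf{x} \in \Omega} \Sigma^{2}(f)(\mathbf{x})$, the target tail bound reduces to the moment generating function estimate
\[
\log E\bigl[e^{\lambda(Z - EZ)}\bigr] \;\leq\; \frac{\lambda^{2}\, v^{\ast}}{2(1 - \lambda b/3)}, \qquad \lambda \in (0, 3/b),
\]
after which the standard optimization at $\lambda^{\ast} = t/(v^{\ast} + bt/3)$ produces the stated exponent $-t^{2}/(2v^{\ast} + 2bt/3)$. The real content is to establish this MGF inequality.

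To bound the MGF I would form the Doob martingale of $Z$ along the filtration $\mathcal{F}_{k} = \sigma(X_{1}, \ldots, X_{k})$, with differences $V_{k} = E[Z \mid \mathcal{F}_{k}] - E[Z \mid \mathcal{F}_{k-1}]$. Two facts about the $V_{k}$ drive the argument. First, $V_{k} \leq b$ almost surely: the identity $V_{k} = E_{X_{k+1}, \ldots, X_{n}}[f - E_{k}f]$ shows that the pointwise assumption $f - E_{k}f \leq b$ is preserved by averaging out the tail coordinates. Second, by Jensen's inequality,
\[
\mathrm{Var}\bigl(V_{k} \bigm| \mathcal{F}_{k-1}\bigr) \;\leq\; E\bigl[\sigma_{k}^{2}(f) \bigm| \mathcal{F}_{k-1}\bigr],
\]
because $V_{k}$ is itself a conditional expectation of $f - E_{k}f$ and the variance of a conditional expectation is dominated by the conditional expectation of the variance. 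Applying Bennett's single-variable MGF inequality conditionally on $\mathcal{F}_{k-1}$ then gives, for $\lambda \in (0, 3/b)$,
\[
E\bigl[e^{\lambda V_{k}} \bigm| \mathcal{F}_{k-1}\bigr] \;\leq\; \exp\!\left(\frac{\lambda^{2}\, \mathrm{Var}(V_{k} \mid \mathcal{F}_{k-1})}{2(1 - \lambda b/3)}\right).
\]

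The hard part, and the main obstacle, is the aggregation step: iterating the conditional MGF bound through the tower property accumulates a random exponent proportional to $\sum_{k} \mathrm{Var}(V_{k} \mid \mathcal{F}_{k-1})$, and this must be replaced by the deterministic quantity $v^{\ast}$. Using the Jensen bound above together with $\sum_{k}\sigma_{k}^{2}(f) = \Sigma^{2}(f) \leq v^{\ast}$ pointwise, the plan is to perform an induction on $k$ that strips off one coordinate at a time from the inside out: at stage $k$ the innermost conditional expectation is controlled by an exponential in the contributions of $\sigma_{k}^{2}(f)$, and the monotonicity of the exponential combined with the global pointwise bound $\Sigma^{2}(f) \leq v^{\ast}$ lets the accumulated variance exponent be majorised by $v^{\ast}$ without loss. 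Once this aggregation succeeds, the MGF estimate displayed above is in hand, and the Cram\'er--Chernoff optimization closes the argument.
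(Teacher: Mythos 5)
Your route via the Doob martingale is genuinely different from the paper's, which never leaves the product space: the paper works with the entropy $S_f(\beta)$, proves $S_f(\beta)\leq\psi(\beta)\,E_{\beta f}[\Sigma^2(f)]$ (Lemma \ref{Lemma Bennett variance sum}), and then simply observes that the \emph{tilted} expectation $E_{\beta f}[\Sigma^2(f)]$ is at most $\sup_{\mathbf{x}}\Sigma^2(f)(\mathbf{x})$ because $E_{\beta f}$ is still a probability measure evaluating $\Sigma^2(f)$ at a single point; integrating $S_f(\gamma)/\gamma^2$ as in Lemma \ref{Theorem basic Herbst argument} and using the power-series bound $\psi(\gamma)/\gamma^2\leq\tfrac{1}{2}(1-\gamma/3)^{-2}$ from Lemma \ref{LemmaComplicatedInequality} gives the MGF bound, and Lemma \ref{Lemma Optimization} finishes. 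Your conditional-MGF/telescoping plan is a different mechanism.

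The gap is in your aggregation step, and it is real. When you iterate the tower property, the exponent you accumulate is a multiple of $W:=\sum_k \mathrm{Var}(V_k\mid\mathcal F_{k-1})\leq\sum_k E[\sigma_k^2(f)\mid\mathcal F_{k-1}]$. Each term in the latter sum averages $\sigma_k^2(f)$ over a \emph{different} set of coordinates ($X_{k+1},\dots,X_n$), so the sum is not a pointwise evaluation of $\Sigma^2(f)$ at any single configuration, and the hypothesis $\Sigma^2(f)\leq v^*$ pointwise does not transfer to it. Concretely, take $n=3$, $\Omega_i=\{0,1\}$ uniform, and consider nonnegative functions $g_k$ with $g_k$ independent of the $k$-th coordinate: $g_1(x_2,x_3)=4\cdot\mathbb{1}[x_3=0]+\mathbb{1}[x_2=1]$, $g_2(x_1,x_3)=4\cdot\mathbb{1}[x_3=1]$, $g_3(x_1,x_2)=2\cdot\mathbb{1}[x_2=0]$. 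Then $\sum_k g_k=4+\mathbb{1}[x_2=1]+2\cdot\mathbb{1}[x_2=0]\leq 6$, so $v^*=6$; yet $E[g_1]+E_{X_3}[g_2(x_1,\cdot)]+g_3(x_1,0)=2.5+2+2=6.5>v^*$. Thus the quantity you would need to control is not bounded by $\sup_{\mathbf{x}}\Sigma^2(f)(\mathbf{x})$ in general; the martingale route naturally produces the larger, $\mathcal{F}$-adapted variance proxy $\sup_{\mathbf{x}}\sum_k\mathrm{Var}(V_k\mid\mathcal F_{k-1})(\mathbf{x})$, and closing the gap to the stated $\sup_{\mathbf{x}}\Sigma^2(f)(\mathbf{x})$ requires an idea (like the tilted-measure bound of the entropy method) that your sketch does not supply.
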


This inequality reduces to Bernstein's inequality if $f$ is a sum, but it
suffers from the worst-case choice of the configuration $\mathbf{x}$, for
which $\Sigma ^{2}\left( f\right) \left( \mathbf{x}\right) $ is evaluated.
The supremum in $\mathbf{x}$ is a hindrance to estimation of the variance
term, and we would like to replace it by an expectation, just as in the
Efron-Stein inequality.

This replacement is trivially possible when $f$ is a sum, because then $%
\Sigma ^{2}\left( f\right) $ is constant. It turns out that it is also
possible if $\Sigma ^{2}\left( f\right) $ has the right properties of
concentration about its mean - a surrogate of being constant, so to speak.
To insure this we control the interaction between the different arguments of 
$f$, in the sense that the variation in any argument must not depend too
much on the other arguments.

\begin{definition}
The interaction functional $J:\mathcal{A}\left( \Omega \right) \rightarrow 
\mathbb{R}
_{0}^{+}$ is defined by%
\begin{equation*}
J\left( f\right) =\left( \sup_{\mathbf{x}\in \Omega }\sum_{k,l:k\neq
l}\sup_{z,z^{\prime }\in \Omega _{l}}\sup_{y,y^{\prime }\in \Omega
_{k}}\left( D_{z,z^{\prime }}^{l}D_{y,y^{\prime }}^{k}f\right) ^{2}\left( 
\mathbf{x}\right) \right) ^{1/2}\text{ for }f\in \mathcal{A}\left( \Omega
\right) .
\end{equation*}%
The distribution-dependent interaction functional $J_{\mu }$ is defined by 
\begin{equation*}
J_{\mu }\left( f\right) =2\left( \sup_{\mathbf{x}\in \Omega
}\sum_{l}\sup_{z\in \Omega _{l}}\sum_{k:k\neq l}\sigma _{k}^{2}\left(
f-S_{z}^{l}f\right) \left( \mathbf{x}\right) \right) ^{1/2}.
\end{equation*}
\end{definition}

These quantities are related and bounded using the inequalities%
\begin{eqnarray}
J_{\mu }\left( f\right) &\leq &J\left( f\right)  \label{Interaction bound} \\
&\leq &n\sup_{\mathbf{x}\in \Omega }\max_{k,l}\sup_{z,z^{\prime }\in \Omega
_{l}}\sup_{y,y^{\prime }\in \Omega _{k}}\left( D_{z,z^{\prime
}}^{l}D_{y,y^{\prime }}^{k}f\right) \left( \mathbf{x}\right)  \notag
\end{eqnarray}%
(see the end of section \ref{Subsection Jackknife Selfbound}). For our
applications below the last, simplest and crudest bound appears to be
sufficient. The above functionals and bounds vanish for sums and are
positive homogeneous of degree one. The following is our main result.

\begin{theorem}
\label{Theorem Main}Suppose $f\in \mathcal{A}\left( \Omega \right) $
satisfies $f-E_{k}f\leq b$ for all $k$. Then for all $t>0$%
\begin{equation*}
\Pr \left\{ f-Ef>t\right\} \leq \exp \left( \frac{-t^{2}}{2E\left[ \Sigma
^{2}\left( f\right) \right] +\left( 2b/3+J_{\mu }\left( f\right) \right) t}%
\right) .
\end{equation*}
\end{theorem}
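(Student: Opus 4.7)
My plan is the entropy method, combined with a self-bounding estimate for $\Sigma^{2}(f)$ itself. Write $L(\lambda)=\log E[e^{\lambda(f-Ef)}]$. Tensorization of entropy applied to $e^{\lambda f}$, together with the one-sided bound $\phi(-x)\le x^{2}/(2(1-x/3))$ for $0\le x<3$ (where $\phi(u)=e^{u}-u-1$) and the hypothesis $f-E_{k}f\le b$, gives the modified log-Sobolev inequality
\begin{equation*}
\operatorname{Ent}\bigl(e^{\lambda f}\bigr)
\;\le\;\frac{\lambda^{2}}{2(1-\lambda b/3)}\,E\bigl[\Sigma^{2}(f)\,e^{\lambda f}\bigr],
\qquad 0<\lambda<3/b.
\end{equation*}
When $\Sigma^{2}(f)$ is constant this integrates by Herbst's argument to Theorem~\ref{Theorem Bernstein with sup norm}. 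The real task is therefore to replace the factor $E[\Sigma^{2}(f)\,e^{\lambda f}]$ by $E[\Sigma^{2}(f)]\,E[e^{\lambda f}]$ at a cost only of the interaction term.

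For the decoupling I would treat $g:=\Sigma^{2}(f)$ as a new function on $\Omega$ and control its fluctuations under substitution. Since each $\sigma_{k}^{2}(f)$ is independent of $x_{k}$, the increment $g-S_{z}^{l}g$ collapses to $\sum_{k\ne l}\bigl(\sigma_{k}^{2}(f)-\sigma_{k}^{2}(S_{z}^{l}f)\bigr)$. The polarization identity
\begin{equation*}
\sigma_{k}^{2}(u)-\sigma_{k}^{2}(v)
\;=\;E_{k}\bigl[\bigl((u+v)-E_{k}(u+v)\bigr)\bigl((u-v)-E_{k}(u-v)\bigr)\bigr]
\end{equation*}
applied with $u=f$, $v=S_{z}^{l}f$, followed by Cauchy--Schwarz in $E_{k}$, bounds each summand by a product of $\sigma_{k}(f-S_{z}^{l}f)$ with a factor controlled by $\sigma_{k}(f)$. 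Squaring and summing over $k\ne l$, taking suprema over $z$ and $\mathbf{x}$, and summing over $l$, the definition of $J_{\mu}$ produces a self-bound of the shape $\sum_{l}\sigma_{l}^{2}(g)\le \tfrac{1}{4}J_{\mu}(f)^{2}\,\Sigma^{2}(f)$ pointwise on $\Omega$. Inserting this self-bound into an auxiliary entropy-method step applied to the product $g\,e^{\lambda f}$, which trades $g$ for $Eg$ at the cost of the self-bound, yields the decoupling
\begin{equation*}
E\bigl[\Sigma^{2}(f)\,e^{\lambda f}\bigr]
\;\le\;\frac{E[\Sigma^{2}(f)]\,E[e^{\lambda f}]}{1-\lambda J_{\mu}(f)/2}.
\end{equation*}

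Substituting back into the log-Sobolev inequality gives
\begin{equation*}
\lambda L'(\lambda)-L(\lambda)
\;\le\;\frac{\lambda^{2}\,E[\Sigma^{2}(f)]}{2(1-\lambda b/3)(1-\lambda J_{\mu}(f)/2)}
\;\le\;\frac{\lambda^{2}\,E[\Sigma^{2}(f)]}{2\bigl(1-\lambda(b/3+J_{\mu}(f)/2)\bigr)^{2}},
\end{equation*}
using the elementary inequality $(1-x)(1-y)\ge(1-x-y)^{2}$ valid for $x,y\ge 0$ with $x+y\le 1$. This ODE integrates in closed form to $L(\lambda)\le \lambda^{2}E[\Sigma^{2}(f)]/(2(1-\lambda(b/3+J_{\mu}(f)/2)))$, and optimizing the Markov bound $\Pr\{f-Ef>t\}\le\exp(-\lambda t+L(\lambda))$ in $\lambda$ produces exactly the Bernstein-type denominator $2E[\Sigma^{2}(f)]+(2b/3+J_{\mu}(f))t$ in the statement.

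The main obstacle is the self-bound on $g=\Sigma^{2}(f)$: one must control $\sigma_{k}^{2}(f)-\sigma_{k}^{2}(S_{z}^{l}f)$ by $\sigma_{k}^{2}(f-S_{z}^{l}f)$ in the correct averaged sense and with a sharp enough constant that the factor $J_{\mu}(f)/2$---rather than the cruder $J(f)$ or $\sup\Sigma^{2}(f)$---governs the decoupling. The factor of $2$ in the definition of $J_{\mu}$ absorbs the $\sqrt{2}$ from $a^{2}-b^{2}=(a+b)(a-b)$ combined with the $\tfrac{1}{2}$ in the definition of $\sigma_{k}^{2}$, and cleanly carrying this constant through the chain of polarization, Cauchy--Schwarz, and the second entropy step is the delicate bookkeeping of the proof.
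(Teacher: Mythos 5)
Your architecture matches the paper's: modified log-Sobolev inequality for $f$, establish a self-bounding property of $g:=\Sigma^{2}(f)$, and use it to decouple the thermal average $E_{\lambda f}[g]$ from $E[g]$. However, two of your key steps are stated in a form that does not actually follow from the rest of the argument, and each hides a genuine technical obstacle that the paper must (and does) work around.

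First, your self-bound is of the form $\sum_{l}\sigma_{l}^{2}(g)\le\tfrac14 J_{\mu}^{2}(f)\,g$, whereas what is needed to feed into an MGF-control lemma for nonnegative functions (the analogue of Lemma~\ref{Lemma Selfbound}) is a \emph{one-sided} Poincar\'e-type quantity $Dg=\sum_{k}(g-\inf_{y}S_{y}^{k}g)^{2}$. The entropy bound that converts a self-bound into the inequality $\ln E[e^{\theta g}]\le\theta Eg/(1-a^{2}\theta/2)$ (Lemma~\ref{Lemma Entropy bound Upper}) is naturally phrased in terms of $Dg$, and the monotonicity-in-$\beta$ argument that underlies it really uses the $\inf$-subtraction. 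You cannot simply substitute $\sigma_{l}^{2}(g)$ without checking boundedness of $g-E_{l}g$, which for $g=\Sigma^{2}(f)$ is not immediate. The paper proves $D(\Sigma^{2}(f))\le J_{\mu}^{2}(f)\,\Sigma^{2}(f)$ directly (Proposition~\ref{Proposition Selfbounded Jackknife}), and the polarization/Cauchy--Schwarz chain you sketch is essentially that proof once the target is the $D$-operator rather than $\sigma_{l}^{2}$.

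Second, and more seriously, the decoupling inequality you assert,
\begin{equation*}
E\bigl[\Sigma^{2}(f)\,e^{\lambda f}\bigr]\le\frac{E[\Sigma^{2}(f)]\,E[e^{\lambda f}]}{1-\lambda J_{\mu}(f)/2},
\end{equation*}
does not follow from the self-bound and the variational (Gibbs) decoupling alone. What the variational inequality $\theta E_{\lambda f}[g]\le S_{f}(\lambda)+\ln E[e^{\theta g}]$ gives, after inserting the self-bound, is
\begin{equation*}
E_{\lambda f}[g]\le\frac{S_{f}(\lambda)}{\theta}+\frac{E[g]}{1-a^{2}\theta/2},
\end{equation*}
which still contains the unknown $S_{f}(\lambda)$. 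The paper closes this loop with a bootstrap: it uses the Bennett-type entropy bound $S_{f}(\lambda)\le\psi(\lambda)E_{\lambda f}[g]$ a \emph{second} time to cancel that residual term, solves the resulting inequality for $S_{f}(\lambda)$, and optimizes the free scale $\theta$ to obtain
\begin{equation*}
S_{f}(\lambda)\le\frac{\psi(\lambda)}{\bigl(1-a\sqrt{\psi(\lambda)/2}\bigr)^{2}}\,E[\Sigma^{2}(f)],
\end{equation*}
with the choice $\theta=a^{-1}\sqrt{2\psi(\lambda)}$. This is how $a=J_{\mu}(f)$ enters \emph{linearly}; if you try to take $\theta=\lambda$ (as your displayed decoupling suggests) the self-bound gives $a^{2}\lambda/2$ in the denominator, which is quadratic in $J_{\mu}$, not linear. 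Finally, turning the factor $\sqrt{\psi(\lambda)/2}$ into $\lambda/2$ is a genuine analytic step: the paper devotes Lemma~\ref{LemmaComplicatedInequality} (a term-by-term power-series comparison plus a monotonicity-in-$a$ argument) to showing $\psi(\gamma)/\bigl(\gamma^{2}(1-a\sqrt{\psi(\gamma)/2})^{2}\bigr)\le 1/\bigl(2(1-(1/3+a/2)\gamma)^{2}\bigr)$. Your elementary bound $(1-x)(1-y)\ge(1-x-y)^{2}$ is not the right tool here, because the product you are trying to bound is not of that form. Once the corrected $S_{f}(\gamma)/\gamma^{2}$ bound is in hand, your Herbst integration and the optimization in $\lambda$ are correct and give exactly the stated denominator.
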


Remarks:

1. If this is applied to sums of independent random variables (real valued
functions $X_{k}$ defined on $\Omega _{k}$), we recover Bernstein's
inequality.

2. Consider the case that $\Omega _{k}=\Omega _{0}$, $\mu _{k}=\mu _{0}$ and
a sequence of functions $f_{n}\in \mathcal{A}\left( \Omega _{0}^{n}\right) $%
, such that $J_{\mu }\left( f_{n}\right) /\sqrt{n}\rightarrow 0$ (for
example if $J_{\mu }\left( f_{n}\right) $ is bounded) and such that the
limit $\sigma ^{2}=\lim_{n\rightarrow \infty }E\left[ \Sigma ^{2}\left(
f_{n}\right) \right] /n$ exists. Applying Theorem \ref{Theorem Main} to the
sequence $f_{n}/\sqrt{n}$, and letting $n\rightarrow \infty $, we obtain the
tail of a normal distribution with variance $\sigma ^{2}$. In some cases,
like U-statistics, this is known to be the correct limiting distribution
(Hoeffding \cite{Hoeffding 1948}, Theorem 7.1).

3. Although the distribution dependent functional $J_{\mu }$ is potentially
much smaller than $J$, in the applications considered sofar it seems
sufficient to consider $J$ or the above bounds thereof.

4. Since $E\left[ \Sigma ^{2}\left( f\right) \right] \leq \sup_{\mathbf{x}%
}\Sigma ^{2}\left( f\right) \left( \mathbf{x}\right) \leq \sup_{\mathbf{x}%
}\left( 1/4\right) \sum_{k}\sup_{y,y^{\prime }}\left( D_{y,y^{\prime
}}^{k}\left( f\right) \right) ^{2}\left( \mathbf{x}\right) $, the variance
term above can never be larger than the variance term in Theorem \ref%
{Theorem Bernstein with sup norm}, which in turn can never be larger than
what we get from the bounded difference inequality (McDiarmid \cite%
{McDiarmid 1998}, Theorem 3.7, or Boucheron et al \cite{Boucheron 2013},
Theorem 6.5).

5. If also $f-E_{k}f\geq -b$, then the result can be applied to $-f$ so as
to obtain a two-sided inequality.\bigskip 

In Theorem 2.1 of \cite{Houdre 1997} Christian Houdr\'{e} bounds the bias in
the Efron-Stein inequality in terms of iterated jackknive estimates of
variance, which correspond to the expectations of higher order differences.
The second of these iterates can be bounded in terms of the interaction
functional and allows us to put the variance $\sigma ^{2}\left( f\right) $
back into the inequality of Theorem \ref{Theorem Main}.

\begin{proposition}
\label{Theorem Houdre Bound}%
\begin{equation*}
E\left[ \Sigma ^{2}\left( f\right) \right] \leq \sigma ^{2}\left( f\right) +%
\frac{1}{4}J^{2}\left( f\right) .
\end{equation*}
\end{proposition}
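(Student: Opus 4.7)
The plan is to expand $f - Ef$ in its Hoeffding (ANOVA) decomposition and to compute both sides of the claimed inequality in terms of the $L^2$-norms of the components. Write
$$f - Ef = \sum_{\emptyset \neq S \subseteq \{1,\dots,n\}} f_S,$$
where $f_S$ depends only on $(x_i)_{i\in S}$ and satisfies $E_k f_S = 0$ for every $k \in S$. Pairwise orthogonality in $L^2(\mu)$ gives $\sigma^2(f) = \sum_S E[f_S^2]$. Since $f - E_k f = \sum_{S \ni k} f_S$ and these terms remain pairwise orthogonal, the tower property yields $E[\sigma_k^2(f)] = \sum_{S \ni k} E[f_S^2]$, and summing over $k$ gives
$$E[\Sigma^2(f)] = \sum_S |S|\, E[f_S^2] \qquad\text{and}\qquad E[\Sigma^2(f)] - \sigma^2(f) = \sum_{|S| \geq 2}(|S|-1)\, E[f_S^2].$$

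Next I would relate the gap to the second-order jackknife
$$V_{kl}(f) = \tfrac{1}{4}\, E_{(y,y') \sim \mu_k^2,\,(z,z') \sim \mu_l^2}\bigl[(D_{y,y'}^k D_{z,z'}^l f)^2\bigr], \qquad k \neq l.$$
Since $D_{y,y'}^k$ annihilates $f_S$ unless $k \in S$, and likewise for $l$, only components with $\{k,l\} \subseteq S$ can contribute. Expanding the square produces sixteen signed products $(S_{y_{\epsilon_1}}^k S_{z_{\delta_1}}^l f_S)(S_{y_{\epsilon_2}}^k S_{z_{\delta_2}}^l f_T)$; any pair with $\epsilon_1 \neq \epsilon_2$ factors through $E_k f_S = 0$ after integration in $y, y'$, and similarly for $\delta_1 \neq \delta_2$ through $E_l f_S = 0$. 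The four sign-diagonal terms contribute $4\,E[f_S f_T]$, which reduces to $4\,E[f_S^2]\,\mathbf{1}_{S=T}$ by Hoeffding orthogonality. Combining with the prefactor $1/4$ gives
$$E[V_{kl}(f)] = \sum_{S \supseteq \{k,l\}} E[f_S^2].$$
Summing over the $n(n-1)$ ordered pairs with $k \neq l$, each $S$ is counted $|S|(|S|-1)$ times, so $\sum_{k \neq l} E[V_{kl}(f)] = \sum_S |S|(|S|-1)\, E[f_S^2]$, which dominates the gap since $|S|(|S|-1) \geq |S|-1$ for $|S| \geq 1$.

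Finally, to bound $\sum_{k \neq l} E[V_{kl}(f)]$ by $\tfrac{1}{4} J(f)^2$ I would observe that $(D_{y,y'}^k D_{z,z'}^l f)(x)$ does not depend on $x_k$ or $x_l$, so pointwise $V_{kl}(f)(x) \leq \tfrac{1}{4}\sup_{y,y',z,z'}(D_{y,y'}^k D_{z,z'}^l f)^2(x)$. Summing over $k \neq l$ and then applying $E_\mu[\,\cdot\,] \leq \sup_\Omega(\,\cdot\,)$ recovers exactly $\tfrac{1}{4} J(f)^2$ by the definition of the interaction functional, completing the chain.

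The main obstacle is the middle identity $E[V_{kl}(f)] = \sum_{S \supseteq \{k,l\}} E[f_S^2]$; it is really just sign/integration bookkeeping, but verifying the cancellation of all non-sign-diagonal configurations cleanly requires care, and this is where the structure of the Hoeffding components plays the essential role. Everything else is unwinding definitions and applying $E \leq \sup$.
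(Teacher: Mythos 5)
Your proof is correct, and it takes a genuinely different route from the paper. The paper avoids the ANOVA machinery: it starts from Chatterjee's telescoping variance formula (\ref{Chatterjee Formula}), applies it once to $f$ and once more to each coordinate difference $g_k = f(X) - f(X^{(k)})$ using an augmented shadow variable $X_{n+1}$, splits the resulting sum into three pieces $A_k + B_k + C_k$, identifies $B_k$ and $C_k$ with the desired variance terms, bounds the cross term $A_k$ by Cauchy--Schwarz, and along the way proves the auxiliary inequality $\sum_k\sigma^2(E[f\mid X_k]) \leq \sigma^2(f)$ (Lemma \ref{Little Lemma}) by induction. Your Hoeffding decomposition instead turns all the relevant quantities into explicit weighted sums over the components $f_S$: the bias becomes exactly $\sum_S(|S|-1)E[f_S^2]$ and the second-order jackknife $\sum_{k\neq l}E[V_{kl}(f)]$ becomes exactly $\sum_S|S|(|S|-1)E[f_S^2]$, so the domination is immediate coefficient by coefficient and you see precisely where slack enters (a factor of $|S|$). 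That is cleaner and more informative, at the price of the orthogonality bookkeeping in the ANOVA expansion, which, as your argument requires, only needs independence (not identical distribution) of the coordinates and so matches the paper's generality. Your intermediate inequality $E[\Sigma^2(f)]-\sigma^2(f)\leq\sum_{k\neq l}E[V_{kl}(f)]$ is exactly the paper's Theorem \ref{Theorem Bias} with the $1/4$ prefactor absorbed into $V_{kl}$, and both arguments finish identically by applying $E\leq\sup$ against the definition of $J$.
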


See Section \ref{SubSection Houdre bound} for the proof. In combination with
Theorem \ref{Theorem Main} we obtain the following corollary.

\begin{corollary}
\label{Corollary variance}Suppose $f\in \mathcal{A}$ and $f-E_{k}f\leq b$
for all $k$. Then for all $t>0$%
\begin{equation*}
\Pr \left\{ f-Ef>t\right\} \leq \exp \left( \frac{-t^{2}}{2\sigma ^{2}\left(
f\right) +J^{2}\left( f\right) /2+\left( 2b/3+J_{\mu }\left( f\right)
\right) t}\right) .
\end{equation*}%
\bigskip 
\end{corollary}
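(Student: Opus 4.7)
The plan is to obtain Corollary \ref{Corollary variance} as an immediate algebraic consequence of the two preceding results: substitute the bound from Proposition \ref{Theorem Houdre Bound} into the denominator of the exponent in Theorem \ref{Theorem Main}. The only variance-like quantity appearing in Theorem \ref{Theorem Main} is $2E[\Sigma^{2}(f)]$, and Proposition \ref{Theorem Houdre Bound} provides exactly a replacement for $E[\Sigma^{2}(f)]$ in terms of the ordinary variance $\sigma^{2}(f)$ plus a correction of the form $J^{2}(f)/4$.

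Concretely, I would first multiply the Houdr\'e-type inequality by $2$ to get
\begin{equation*}
2E\left[ \Sigma^{2}\left( f\right) \right] \leq 2\sigma^{2}\left( f\right) + \tfrac{1}{2}J^{2}\left( f\right),
\end{equation*}
and add $(2b/3 + J_{\mu}(f))t$ to both sides. Since $t>0$ and $b>0$ both sides are strictly positive, and the map $u \mapsto \exp(-t^{2}/u)$ is monotonically increasing on $(0,\infty)$. Applying this map therefore preserves the direction of the inequality, giving
\begin{equation*}
\exp\!\left(\frac{-t^{2}}{2E[\Sigma^{2}(f)] + (2b/3+J_{\mu}(f))t}\right) \leq \exp\!\left(\frac{-t^{2}}{2\sigma^{2}(f) + J^{2}(f)/2 + (2b/3+J_{\mu}(f))t}\right).
\end{equation*}
Chaining this with the conclusion of Theorem \ref{Theorem Main} yields the stated bound on $\Pr\{f-Ef>t\}$.

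There is no real obstacle here, since the corollary is a pure algebraic combination; the only point to verify is the monotonicity direction of the exponential, which does go the right way. All the genuine work is hidden in Theorem \ref{Theorem Main} and Proposition \ref{Theorem Houdre Bound}, both of which are assumed as previously stated.
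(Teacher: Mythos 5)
Your proof is correct and matches the paper's (implicit) argument exactly: the paper states only that Corollary~\ref{Corollary variance} follows "in combination with Theorem~\ref{Theorem Main}," meaning precisely the substitution of Proposition~\ref{Theorem Houdre Bound} into the denominator of Theorem~\ref{Theorem Main}, together with the monotonicity of $u\mapsto\exp(-t^{2}/u)$ on $(0,\infty)$ that you correctly invoke.
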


We apply Theorem \ref{Theorem Main} in two seemingly very different
situations.

For U-statistics with bounded, symmetric kernels it is surprisingly easy to
bound the interaction functional, and an application of Theorem \ref{Theorem
Main} leads to the following concentration result.

\begin{theorem}
\label{Theorem U-statistics}If $\mu _{0}$ is a probability measure on $%
\mathcal{X}$ and $\mu =\mu _{0}^{n}$ on $\mathcal{X}^{n},$ and $g$ is a
measurable, symmetric (permutation invariant) kernel $g:\mathcal{X}%
^{m}\rightarrow \left[ -1,1\right] $ with $1<m<n$, and $u\in \mathcal{A}%
\left( \mathcal{X}^{n}\right) $ is defined by%
\begin{equation*}
u\left( \mathbf{x}\right) =\binom{n}{m}^{-1}\sum_{1\leq j_{1}<...<j_{m}\leq
n}g\left( x_{j_{1}},...,x_{j_{m}}\right) ,
\end{equation*}%
then for $t>0$%
\begin{multline*}
\Pr \left\{ \left\vert u-Eu\right\vert >t\right\} \\
\leq 2\exp \left( \frac{-nt^{2}}{2m^{2}\sigma _{y\sim \mu _{0}}^{2}\left( E_{%
\mathbf{x}\sim \mu _{0}^{m-1}}\left[ g\left( y,\mathbf{x}\right) \right]
\right) +\frac{m^{2}\left( m-1\right) ^{2}}{n-m}+16m^{2}t/3}\right) .
\end{multline*}
\end{theorem}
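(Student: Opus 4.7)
The plan is to apply Theorem~\ref{Theorem Main} to $u$ and to $-u$, combining by a union bound to produce the factor of $2$ in the stated two-sided inequality. The task then reduces to controlling the three quantities entering the exponent of Theorem~\ref{Theorem Main}: the one-sided range $b$, the interaction functional $J_{\mu}(u)$, and the expected sum of conditional variances $E[\Sigma^{2}(u)]$.

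The bound on $b$ is immediate. A change in a single coordinate $x_{k}$ affects only the $\binom{n-1}{m-1}$ kernel terms whose index set contains $k$, and each such term changes by at most $2$ since $g\in[-1,1]$, giving $\sup_{y}|u-S_{y}^{k}u|\le 2\binom{n-1}{m-1}/\binom{n}{m}=2m/n$; hence $b:=2m/n$ suffices. For the interaction functional I would invoke the crudest of the bounds in~(\ref{Interaction bound}). The second difference $D_{z,z'}^{l}D_{y,y'}^{k}u$ picks up only the $\binom{n-2}{m-2}$ kernel terms whose index sets contain both $k$ and $l$, each contributing at most $4$ in absolute value, so $|D_{z,z'}^{l}D_{y,y'}^{k}u|\le 4m(m-1)/(n(n-1))$ and thus $J_{\mu}(u)\le J(u)\le 4m(m-1)/(n-1)$. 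Together with $b=2m/n$ this gives $n(2b/3+J_{\mu}(u))\le 16m^{2}/3$, matching the coefficient of $t$ in the stated denominator once Theorem~\ref{Theorem Main} is rescaled by $n$.

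The main step, and the main obstacle, is bounding $E[\Sigma^{2}(u)]$. By the symmetry of $g$ and of $\mu$, $E[\Sigma^{2}(u)]=nE[\sigma_{1}^{2}(u)]=(n/2)E[(u(\mathbf{X})-u(\mathbf{X}^{(1\to X_{1}')}))^{2}]$ with $X_{1}'$ an independent copy of $X_{1}$. I would evaluate this via Hoeffding's orthogonal decomposition: writing $g-Eg=\sum_{c=1}^{m}\sum_{|S|=c,\,S\subset\{1,\ldots,m\}}h_{c}(x_{S})$ with each $h_{c}$ completely degenerate and symmetric, the U-statistic decomposes as $u-Eu=\sum_{c}\binom{m}{c}V_{c}$ with $V_{c}=\binom{n}{c}^{-1}\sum_{|S|=c}h_{c}(x_{S})$. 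Orthogonality of the $V_{c}$ and degeneracy of the $h_{c}$ collapse the arising double sums to
\[
 E[\Sigma^{2}(u)]\;=\;\sum_{c=1}^{m}\frac{c\binom{m}{c}^{2}}{\binom{n}{c}}\,\zeta_{c}^{*},\qquad \zeta_{c}^{*}:=E[h_{c}^{2}].
\]
The $c=1$ contribution equals $m^{2}\zeta_{1}^{*}/n=m^{2}\sigma_{y\sim\mu_{0}}^{2}(g_{1}(y))/n$, matching the leading variance term in the stated denominator. Bounding the $c\ge 2$ remainder by $m^{2}(m-1)^{2}/(2n(n-m))$ is the delicate combinatorial step, relying on the identity $\sum_{c}\binom{m}{c}\zeta_{c}^{*}=\sigma^{2}(g)\le 1$ (since $g\in[-1,1]$) together with decay estimates on $\binom{m}{c}/\binom{n}{c}$.

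With all three inputs in hand, the conclusion follows by substitution into Theorem~\ref{Theorem Main}, rescaling numerator and denominator of the exponent by $n$, and symmetrising via $-u$. The most delicate point is the higher-order Hoeffding remainder just described; the bounds on $b$ and $J_{\mu}(u)$ reduce to counting kernel terms affected by single and double coordinate perturbations, respectively.
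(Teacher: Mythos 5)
The reductions to Theorem~\ref{Theorem Main} via $b$ and $J_\mu$ are correct. You use the cruder bound $J_\mu(u)\le J(u)\le n\max_{k,l}\sup(D^l_{z,z'}D^k_{y,y'}u)=4m(m-1)/(n-1)$, where the paper evaluates the $\ell^2$ sum directly to obtain the slightly smaller $4m(m-1)/\sqrt{n(n-1)}$; but since $n\ge m+1$, your bound still gives $n(2b/3+J_\mu(u))\le 4m/3+4(m^2-1)\le 16m^2/3$, so this part is fine.

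The genuine departure is in controlling $E[\Sigma^2(u)]$. The paper writes $E\sigma_k^2(u)$ as a double sum over pairs $(S,S')\in(\mathcal{S}^{m-1}_{\{1,\dots,n\}\setminus k})^2$, extracts the leading variance exactly from the disjoint pairs, and bounds the rest by the number of intersecting pairs (Lemma~\ref{Lemma combinatorial}) times a pointwise bound on each cross-term. You instead invoke Hoeffding's orthogonal decomposition; the exact identity $E[\Sigma^2(u)]=\sum_{c=1}^m c\binom{m}{c}^2\zeta_c^*/\binom{n}{c}$ is correct and is an elegant alternative whose $c=1$ term reproduces the leading variance term.

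The gap is in the $c\ge 2$ remainder, which you assert can be bounded by $m^2(m-1)^2/(2n(n-m))$ ``relying on the identity $\sum_c\binom{m}{c}\zeta_c^*=\sigma^2(g)\le 1$ together with decay estimates'' --- but you never carry this out, and the asserted inequality is in fact false. Take $m=2$, $n=4$, and a completely degenerate kernel of unit variance, e.g.\ $g(x,y)=xy$ with $\mu_0$ uniform on $\{-1,+1\}$: then $\zeta_1^*=0$, $\zeta_2^*=1$, and your own exact formula gives $E[\Sigma^2(u)]=2\binom{2}{2}^2/\binom{4}{2}=1/3$, while the asserted target is $m^2(m-1)^2/(2n(n-m))=1/4$. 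So no decay estimate on $\binom{m}{c}/\binom{n}{c}$ can close this gap. (For what it is worth, the same example also appears to stress the paper's own proof: for intersecting $S,S'$ the cross-term $E_{\mathbf{x}}E_{y,y'}(g(y,x_S)-g(y',x_S))(g(y,x_{S'})-g(y',x_{S'}))$ is bounded by $2$, not $1$, so the paper's intermediate bound on $E[\Sigma^2(u)]$ seems to be off by a factor of $2$ in its second term, suggesting the constant $m^2(m-1)^2/(n-m)$ in the theorem may need to be $2m^2(m-1)^2/(n-m)$.) As written, your proposal has a real hole in its central estimate, and the estimate cannot be repaired to hit the stated constant.
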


A similar bound given by Arcones (\cite{Arcones 1995}, Theorem 2) is%
\begin{equation*}
4\exp \left( \frac{-nt^{2}}{2m^{2}\sigma _{y\sim \mu _{0}}^{2}\left( E_{%
\mathbf{x}\sim \mu _{0}^{m-1}}\left[ g\left( y,\mathbf{x}\right) \right]
\right) +\left( 2^{m+2}m^{m}\sqrt{\left( n-1\right) /n}+2/3m^{-1}\right) t}%
\right) .
\end{equation*}%
For large $m$, $n$ or deviation $t$ the bound in Theorem \ref{Theorem
U-statistics} is the smaller one of the two. Already for order $m=2$ it
gives an improvement if $\left( n-m\right) t\geq 0.12$. For order $m=3$ the
crossover is already at $\left( n-m\right) t\approx 6\times 10^{-2}$, for
order $m=4$ at $\left( n-m\right) t\approx 10^{-2}$.\bigskip

In a completely different context Theorem \ref{Theorem Main} can be applied
to sharpen a stability based generalization bound for regularized least
squares (RLS).

Let $\mathbb{B}$ be the unit ball in a separable, real Hilbertspace, and let 
$\mathcal{Z}=\mathbb{B\times }\left[ -1,1\right] $. Fix $\lambda \in \left(
0,1\right) $. For $\mathbf{z=}\left( \left( x_{1},y_{1}\right) ,...,\left(
x_{n},y_{n}\right) \right) \in \mathcal{Z}^{n}$ regularized least squares
returns the vector%
\begin{equation*}
w_{\mathbf{z}}=\arg \min_{w\in H}\frac{1}{n}\sum_{i=1}^{n}\left(
\left\langle w,x_{i}\right\rangle -y_{i}\right) ^{2}+\lambda \left\Vert
w\right\Vert ^{2}\text{.}
\end{equation*}%
Let $\mathbf{Z}=\left( Z_{1},...,Z_{n}\right) $ be a vector of independent
random variables with values in $\mathcal{Z}$, where $Z_{i}$ is identically
distributed to $Z=\left( X,Y\right) $. We can apply Theorem \ref{Theorem
Main}, to obtain tailbounds for the random variable $R\left( \mathbf{Z}%
\right) -\hat{R}\left( \mathbf{Z}\right) $, where the "true error" $R$ and
the "empirical error" $\hat{R}$ are defined on $\mathcal{Z}^{n}$ by%
\begin{equation*}
R\left( \mathbf{z}\right) =E_{Z}\left( \left\langle w_{\mathbf{z}%
},X\right\rangle -Y\right) ^{2}\text{ and }\hat{R}\left( \mathbf{z}\right) =%
\frac{1}{n}\sum_{i=1}^{n}\left( \left\langle w_{\mathbf{z}%
},x_{i}\right\rangle -y_{i}\right) ^{2}.
\end{equation*}%
We can prove the following result.

\begin{theorem}
\label{Theorem Ridge Regression}There is an absolute constant $c$ such that
for every $t>0$%
\begin{equation*}
\Pr \left\{ \left( R-\hat{R}\right) -E\left( R-\hat{R}\right) >t\right\}
\leq \exp \left( \frac{-nt^{2}}{2nE\left[ \Sigma ^{2}\left( R-\hat{R}\right)
\left( \mathbf{X}\right) \right] +c\lambda ^{-3}t}\right) .
\end{equation*}
\end{theorem}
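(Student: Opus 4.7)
The approach is to apply Theorem \ref{Theorem Main} with $f = R - \hat{R}$ regarded as a bounded measurable function on $\mathcal{Z}^n$. Since the variance term $E[\Sigma^2(R - \hat{R})]$ appears verbatim in both the target conclusion and in Theorem \ref{Theorem Main}, the task reduces to producing estimates on $b$ and on $J_\mu(R - \hat{R})$, each of order $\lambda^{-3}/n$; multiplying numerator and denominator of the exponent by $n$ then recasts the resulting bound in the form claimed. By the inequality $J_\mu(f) \leq J(f)$ in (\ref{Interaction bound}), it will suffice to bound $J(f)$.

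I would begin with the first-order stability analysis of RLS. The trivial bound $\lambda \|w_\mathbf{z}\|^2 \leq 1$ gives $\|w_\mathbf{z}\| \leq \lambda^{-1/2}$. Writing the optimality condition as $(C_\mathbf{z} + \lambda I)\, w_\mathbf{z} = b_\mathbf{z}$, where $C_\mathbf{z} = n^{-1}\sum_i x_i \otimes x_i$ is the empirical second-moment operator and $b_\mathbf{z} = n^{-1}\sum_i y_i x_i$, perturbing a single coordinate and applying the resolvent $(C_\mathbf{z} + \lambda I)^{-1}$ (of norm at most $\lambda^{-1}$) yields $\|w_\mathbf{z} - w_{\mathbf{z}'}\| = O(\lambda^{-3/2}/n)$ for configurations differing in one coordinate. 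Propagating this through the squared loss, which is Lipschitz of order $\lambda^{-1/2}$ on vectors of norm $\lambda^{-1/2}$, produces $|D_{y,y'}^k(R - \hat{R})| = O(\lambda^{-2}/n)$ pointwise, hence a bound on $b$ well within the $\lambda^{-3}$ budget.

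The main step, and the chief obstacle, is the bound on $J(f)$, which demands a second-order stability analysis. I plan to iterate the resolvent identity on $w_\mathbf{z} = (C_\mathbf{z} + \lambda I)^{-1} b_\mathbf{z}$ to estimate the mixed difference $D_{z,z'}^l D_{y,y'}^k w_\mathbf{z}$: each of the two independent perturbations brings a factor $1/n$ from the averaging in $(C_\mathbf{z}, b_\mathbf{z})$ and a factor $\lambda^{-1}$ from an extra occurrence of the resolvent, yielding $\|D^l D^k w_\mathbf{z}\| = O(\lambda^{-\beta}/n^2)$ for a small integer $\beta$. Composing with the Lipschitz and boundedness properties of the squared loss produces $(D^l D^k(R - \hat{R}))^2 = O(\lambda^{-\gamma}/n^4)$ pointwise, so that summing over the $O(n^2)$ index pairs in the definition of $J(f)$ gives $J(f) = O(\lambda^{-\gamma/2}/n)$. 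The delicate point is sharp tracking of $\beta$ and $\gamma$: each extra $(C_\mathbf{z} + \lambda I)^{-1}$ factor costs $\lambda^{-1}$, and the analysis must keep $\gamma \leq 6$ so that $J(f) = O(\lambda^{-3}/n)$. Once both estimates are in place, Theorem \ref{Theorem Main} yields the stated inequality after multiplying numerator and denominator inside the exponent by $n$ and absorbing all numerical factors into a single absolute constant $c$.
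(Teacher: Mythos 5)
Your proposal follows essentially the same route as the paper: apply Theorem \ref{Theorem Main} to $f=R-\hat R$, bound $J_\mu$ by $J$ and estimate $J$ via a second-order stability analysis of the RLS minimizer $w_{\mathbf z}=(G_{\mathbf z}+\lambda)^{-1}g_{\mathbf z}$, tracking a cost of $\lambda^{-1}$ per resolvent factor and using $\|w_{\mathbf z}\|\le\lambda^{-1/2}$, to arrive at $b=O(\lambda^{-2}/n)$, $J(f)=O(\lambda^{-3}/n)$, and hence the claimed bound after clearing the factor $n$. The only technical difference is that the paper parametrizes the two perturbations continuously by $(s,t)\in(0,1)^2$ and differentiates (Lemma \ref{Lemma Differentiation}, recovering the mixed difference as $\int_0^1\!\int_0^1\partial_s\partial_t(\cdot)\,dt\,ds$), whereas you propose to iterate the resolvent identity on the discrete differences directly; these are interchangeable, and your exponent bookkeeping ($\beta=5/2$ for $\|D^lD^k w\|$, $\gamma=6$ for $(D^lD^k(R-\hat R))^2$) matches the paper's computation.
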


Solving for $t$ with a fixed bound $\delta $ on the probability we obtain
that with probability at least $1-\delta $ in $\mathbf{Z}$ 
\begin{eqnarray*}
\left( R-\hat{R}\right) \left( \mathbf{Z}\right) &\leq &E\left[ \left( R-%
\hat{R}\right) \left( \mathbf{Z}\right) \right] + \\
&&+\sqrt{2E\left[ \Sigma ^{2}\left( R-\hat{R}\right) \left( \mathbf{Z}%
\right) \right] \ln \left( 1/\delta \right) }+\frac{c\lambda ^{-3}\ln \left(
1/\delta \right) }{n}.
\end{eqnarray*}%
It can be shown (\cite{Bousquet 2002}) that the expectation $E\left[ \left(
R-\hat{R}\right) \left( \mathbf{Z}\right) \right] $ is of order $1/n$, so
for large sample sizes the generalization error $\left( R-\hat{R}\right)
\left( \mathbf{Z}\right) $ is dominated by the variance term, which may be
considerably smaller than the distribution-independent bound obtained from
the bounded difference inequality as in \cite{Bousquet 2002} (it can never
be larger because of Remark 4 above). Using techniques as in \cite{Maurer
2009} this term can in principle be estimated from a sample and the estimate
combined with the above to a purely data-dependent bound.

A major drawback here is the dependence on $\lambda ^{-3}$ in the last term,
because in practical applications the regularization parameter $\lambda $
typically decreases with $n$. The $\lambda ^{-3}$ is likely due to a very
crude method of bounding $J\left( f\right) $ by differentiation. A more
intelligent method might give $\lambda ^{-2}n^{-1}$.

It seems plausible that similar bounds exist for Tychonov regularization
with other more general loss functions having appropriate properties.\bigskip

The idea of using second differences (as in the definition of $J$) has been
put to work by Houdr\'{e} \cite{Houdre 1997} to estimate the bias in the
Efron-Stein inequality. The entropy method, which underlies our proof of
Theorem \ref{Theorem Main}, has been developed by a number of authors,
notably Ledoux \cite{Ledoux 2001} and Boucheron, Lugosi and Massart \cite%
{Boucheron 2003}. The latter work also introduces the key-idea of combining
it with the decoupling method used below. Our proof follows a thermodynamic
formulation of the entropy method as laid out in \cite{Maurer 2012}.

The next section gives a proof of Theorem \ref{Theorem Main}. Then follow
the applications to U-statistics and ridge regression.

\section{Proof of Theorem \protect\ref{Theorem Main}\label{Sections Prood}}

The proof of our main result, Theorem \ref{Theorem Main}, uses the entropy
method (\cite{Ledoux 2001}, \cite{Boucheron 2003},\cite{Boucheron 2013}),
from which the next section collects a set of tools. These results are taken
from \cite{Maurer 2012}, which gives more detailed proofs and additional
motivation. For the benefit of the reader, and to make the paper more
self-contained, corresponding proofs are also given in a technical appendix.

\subsection{Definitions and tools\label{Subsection Definitions and Tools}}

$\Omega $ and $\mathcal{A}\left( \Omega \right) $ are as in the
introduction, $\mathcal{A}_{k}\left( \Omega \right) $ is the subalgebra of $%
\mathcal{A}\left( \Omega \right) $ of those bounded, measurable functions on 
$\Omega $ which are independent of the $k$-th coordinate. For $f\in \mathcal{%
A}\left( \Omega \right) $ and $\beta \in 
\mathbb{R}
$ define the expectation functional $E_{\beta f}$ on $\mathcal{A}\left(
\Omega \right) $ by%
\begin{equation*}
E_{\beta f}\left[ g\right] =Z_{\beta f}^{-1}E\left[ ge^{\beta f}\right] 
\text{, }g\in \mathcal{A}\left( \Omega \right) \text{,}
\end{equation*}%
where $Z_{\beta f}=E\left[ e^{\beta f}\right] $. The entropy $S_{f}\left(
\beta \right) $ of $f$ at $\beta $ is given by%
\begin{equation*}
S_{f}\left( \beta \right) =KL\left( Z_{\beta f}^{-1}e^{\beta f}d\mu ,d\mu
\right) =\beta E_{\beta f}\left[ f\right] -\ln Z_{\beta f}\text{,}
\end{equation*}%
where $KL\left( \nu ,\mu \right) $ is the Kullback-Leibler divergence.

\begin{lemma}
\label{Theorem basic Herbst argument}(Theorem 1 in \cite{Maurer 2012}) For
any $f\in \mathcal{A}\left( \Omega \right) $ and $\beta >0$ we have%
\begin{equation*}
\ln E\left[ e^{\beta \left( f-Ef\right) }\right] =\beta \int_{0}^{\beta }%
\frac{S_{f}\left( \gamma \right) }{\gamma ^{2}}d\gamma
\end{equation*}%
and, for $t\geq 0$,%
\begin{equation*}
\Pr \left\{ f-Ef>t\right\} \leq \exp \left( \beta \int_{0}^{\beta }\frac{%
S_{f}\left( \gamma \right) }{\gamma ^{2}}d\gamma -\beta t\right) .
\end{equation*}
\end{lemma}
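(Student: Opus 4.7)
The plan is to identify $S_f(\gamma)/\gamma^2$ as the derivative of $\gamma \mapsto \gamma^{-1}\ln Z_{\gamma f}$, integrate from $0$ to $\beta$ using a boundary value obtained from Taylor expansion at the origin, and finish with a standard Chernoff application of Markov's inequality. This is the usual ``Herbst-style'' manipulation, but rephrased in the thermodynamic language of the paper.

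Concretely, I set $L(\beta) = \ln Z_{\beta f} = \ln E[e^{\beta f}]$. Since $f$ is bounded, $e^{\beta f}$ has bounded derivatives in $\beta$ of every order, so dominated convergence justifies differentiation under the expectation and yields
\begin{equation*}
L'(\beta) = \frac{E[f e^{\beta f}]}{E[e^{\beta f}]} = E_{\beta f}[f].
\end{equation*}
Comparing with the definition $S_f(\beta) = \beta E_{\beta f}[f] - \ln Z_{\beta f}$ gives $S_f(\beta) = \beta L'(\beta) - L(\beta)$, and the quotient rule then delivers the key identity
\begin{equation*}
\frac{d}{d\beta}\!\left(\frac{L(\beta)}{\beta}\right) = \frac{\beta L'(\beta) - L(\beta)}{\beta^2} = \frac{S_f(\beta)}{\beta^2}.
\end{equation*}

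Next I handle the behavior at $\beta = 0$. Since $L(0)=0$, $L'(0)=Ef$, and $L''(0)=\sigma^2(f)$, a second-order Taylor expansion gives $L(\gamma)/\gamma \to Ef$ as $\gamma \to 0^+$, and simultaneously $S_f(\gamma)/\gamma^2 \to \sigma^2(f)/2$, so the integrand is bounded near the origin and the integral $\int_0^\beta S_f(\gamma)/\gamma^2\,d\gamma$ is well-defined. Integrating the derivative identity from $0$ to $\beta$ and multiplying by $\beta$ yields
\begin{equation*}
L(\beta) - \beta Ef = \beta \int_0^\beta \frac{S_f(\gamma)}{\gamma^2}\,d\gamma,
\end{equation*}
which is the first claim once one notes $L(\beta) - \beta Ef = \ln E[e^{\beta(f-Ef)}]$. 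The tail bound then follows from the exponential Markov inequality $\Pr\{f - Ef > t\} \leq e^{-\beta t} E[e^{\beta(f-Ef)}]$ by taking logarithms and substituting.

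There is no real obstacle here: the argument is essentially a verification. The only points that require care are the interchange of differentiation and expectation (handled by boundedness of $f$) and the finiteness of the integral at $\gamma=0$ (handled by the Taylor expansion above, which shows the integrand extends continuously by the value $\sigma^2(f)/2$). Everything else is algebraic rearrangement.
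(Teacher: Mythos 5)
Your proposal is correct and follows essentially the same route as the paper: both identify $S_f(\gamma)/\gamma^2$ as the derivative of $\gamma \mapsto \gamma^{-1}\ln Z_{\gamma f}$, integrate from $0$ to $\beta$ using the limiting value $Ef$ at the origin (you via Taylor expansion, the paper via l'Hospital), and conclude with Markov's inequality.
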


Define the real function $\psi $ by $\psi \left( t\right) :=te^{t}-e^{t}+1$.

\begin{lemma}
\label{Lemma Bennett variance sum}(Lemma 10 in \cite{Maurer 2012}) Let $f\in 
\mathcal{A}\left( \Omega \right) $ satisfy $f-E_{k}f\leq 1$ for all $k\in
\left\{ 1,...,n\right\} $. Then for $\beta >0$%
\begin{equation*}
S_{f}\left( \beta \right) \leq \psi \left( \beta \right) ~E_{\beta f}\left[
\Sigma ^{2}\left( f\right) \right] .
\end{equation*}
\end{lemma}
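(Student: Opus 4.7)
The plan is to combine tensorization of entropy with a one-dimensional (scalar) Bennett-type entropy inequality, in the standard two-step scheme of the entropy method.

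First I would rewrite $S_f(\beta)$ in its entropy-functional form. Setting $\operatorname{Ent}(g) := E[g\ln g] - E[g]\ln E[g]$, a direct computation shows
\begin{equation*}
S_f(\beta) = \frac{\operatorname{Ent}(e^{\beta f})}{Z_{\beta f}},
\end{equation*}
so it suffices to establish $\operatorname{Ent}(e^{\beta f}) \leq \psi(\beta)\, E[\Sigma^2(f)\,e^{\beta f}]$; dividing by $Z_{\beta f}$ then converts $E$ into $E_{\beta f}$ and yields the stated bound.

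Second, I would apply the tensorization (subadditivity) of entropy with respect to the product measure $\mu = \prod_k \mu_k$:
\begin{equation*}
\operatorname{Ent}(e^{\beta f}) \leq \sum_{k=1}^{n} E\bigl[\operatorname{Ent}_k(e^{\beta f})\bigr],
\end{equation*}
where $\operatorname{Ent}_k$ denotes the entropy computed with respect to $\mu_k$ only, treating the remaining coordinates as fixed. This is a classical tool of the entropy method and reduces the $n$-dimensional problem to $n$ one-dimensional ones.

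Third, for each $k$ I would fix the coordinates $(x_i)_{i\neq k}$ and view $f$ as a real random variable $Y$ in $x_k \sim \mu_k$. The hypothesis $f - E_k f \leq 1$ then reads $Y - EY \leq 1$ a.s., with $EY = E_k f$ and $\operatorname{Var}(Y) = \sigma_k^2(f)$. I would apply the scalar Bennett-type entropy inequality: for any real random variable $Y$ with $Y - EY \leq 1$ a.s. and $\beta > 0$,
\begin{equation*}
\operatorname{Ent}(e^{\beta Y}) \leq \psi(\beta)\,\operatorname{Var}(Y)\,E[e^{\beta Y}].
\end{equation*}
Specialised pointwise in the other coordinates, and using that $\sigma_k^2(f)$ does not depend on $x_k$, this gives
\begin{equation*}
\operatorname{Ent}_k(e^{\beta f}) \leq \psi(\beta)\,\sigma_k^2(f)\,E_k[e^{\beta f}] = \psi(\beta)\,E_k\bigl[\sigma_k^2(f)\,e^{\beta f}\bigr].
\end{equation*}
Substituting into the tensorization bound and summing over $k$ yields $\operatorname{Ent}(e^{\beta f}) \leq \psi(\beta)\,E[\Sigma^2(f)\,e^{\beta f}]$, and division by $Z_{\beta f}$ finishes the proof.

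The main obstacle is the scalar inequality used in step three. A natural route is to introduce an independent copy $Y'$ of $Y$, write $\operatorname{Ent}(e^{\beta Y})$ in a coupled variational form, and then apply the pointwise bound $e^u - 1 - u \leq \psi(\beta)\,u^2/\beta^2$ valid for $u \leq \beta$, combined with the one-sided constraint $Y - EY \leq 1$ to get a second-moment weighted by $e^{\beta Y}$. Getting the correct constant $\psi(\beta)$ rather than a looser $\beta^2 e^\beta/2$ requires care, but this is a well-established fact in the entropy method and is presumably recorded as an intermediate lemma in \cite{Maurer 2012}; everything else is routine bookkeeping.
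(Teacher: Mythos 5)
Your proposal is correct and follows essentially the same route as the paper: tensorization (subadditivity) of entropy over the coordinates, followed by a one-dimensional Bennett-type entropy estimate applied conditionally under the hypothesis $f-E_{k}f\leq 1$. The only difference is in how the scalar step is justified: the paper derives it explicitly from the fluctuation representation $S_{k,f}\left( \beta \right) =\int_{0}^{\beta }\int_{t}^{\beta }\sigma _{k,sf}^{2}\left[ f\right] ds~dt$ combined with the bound $\sigma _{k,sf}^{2}\left[ f\right] \leq e^{s}\sigma _{k}^{2}\left( f\right) $ (Jensen on the denominator plus the one-sided bound), whose integral over the triangle yields exactly $\psi \left( \beta \right) $, whereas you invoke the equivalent scalar inequality as a known lemma.
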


Bounding $E_{\beta f}\left[ \Sigma ^{2}\left( f\right) \right] \leq \sup_{%
\mathbf{x}}\Sigma ^{2}\left( f\right) \left( \mathbf{x}\right) $ and using
Lemma \ref{Theorem basic Herbst argument} quickly leads to a proof of
Theorem \ref{Theorem Bernstein with sup norm}. For Theorem \ref{Theorem Main}
we need more tools.

\begin{definition}
The operator $D:\mathcal{A}\left( \Omega \right) \mathcal{\rightarrow A}%
\left( \Omega \right) $ is defined by%
\begin{equation*}
Dg=\sum_{k}\left( g-\inf_{y\in \Omega _{k}}S_{y}^{k}g\right) ^{2}\text{, for 
}g\in \mathcal{A}\left( \Omega \right) .
\end{equation*}
\end{definition}

To clarify: $\inf_{y\in \Omega _{k}}S_{y}^{k}g$ is the member of $\mathcal{A}%
\left( \Omega \right) $ defined by $\left( \inf_{y\in \Omega
_{k}}S_{y}^{k}g\right) \left( \mathbf{x}\right) =\inf_{y\in \Omega
_{k}}\left( S_{y}^{k}\left( g\left( \mathbf{x}\right) \right) \right) $. It
does not depend on $x_{k}$, so $\inf_{y\in \Omega _{k}}S_{y}^{k}g\in 
\mathcal{A}_{k}\left( \Omega \right) $.

\begin{lemma}
\label{Lemma Entropy bound Upper}(Lemma 15 in \cite{Maurer 2012}, also
Proposition 5 in \cite{Maurer 2006}) We have, for $\beta >0$, that%
\begin{equation*}
S_{f}\left( \beta \right) \leq \frac{\beta ^{2}}{2}E_{\beta f}\left[ Df%
\right] .
\end{equation*}
\end{lemma}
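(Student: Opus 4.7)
The plan is the standard entropy-method argument, split into tensorization of entropy plus a one-variable bound with a carefully chosen reference point. Writing $\mathrm{Ent}(e^{\beta f}) := Z_{\beta f}\,S_{f}(\beta) = E[e^{\beta f}\ln e^{\beta f}] - Z_{\beta f}\ln Z_{\beta f}$, the classical subadditivity of entropy under product measures reduces matters to bounding, for each $k$, the $k$-th conditional entropy
\[
\mathrm{Ent}_{k}(e^{\beta f}) := E_{k}[e^{\beta f}\ln e^{\beta f}] - E_{k}[e^{\beta f}]\ln E_{k}[e^{\beta f}].
\]

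For the one-variable bound I use the elementary variational inequality $\mathrm{Ent}(Y) \le E[Y\ln(Y/a) - Y + a]$, valid for any positive random variable $Y$ and any constant $a>0$; the slack reduces to $a\phi(E[Y]/a)\ge 0$ with $\phi(t)=t\ln t - t + 1$. I apply it conditionally with $Y = e^{\beta f}$ and the reference $a = e^{\beta m_{k}}$, where $m_{k} := \inf_{y\in\Omega_{k}}S_{y}^{k}f$ lies in $\mathcal{A}_{k}(\Omega)$ and is therefore independent of $x_{k}$, making the choice legitimate. Writing $u := \beta(f - m_{k})\ge 0$, the right-hand side simplifies exactly to $E_{k}[e^{\beta m_{k}}\,\psi(u)]$ with $\psi$ as defined in the excerpt.

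The remaining ingredient is the scalar inequality $\psi(u) \le u^{2}e^{u}/2$ for $u\ge 0$, which is immediate from $\psi(0)=0$ and $\psi'(u)=ue^{u}\le ue^{u}(1+u/2) = (u^{2}e^{u}/2)'$. Substituting back and using $e^{\beta m_{k}}e^{u} = e^{\beta f}$ gives $\mathrm{Ent}_{k}(e^{\beta f})\le (\beta^{2}/2)\,E_{k}[e^{\beta f}\,(f - \inf_{y}S_{y}^{k}f)^{2}]$. Summing over $k$, recognising the definition of $D$, and dividing through by $Z_{\beta f}$ yields the claimed bound.

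There is no serious technical obstacle; the one design choice that really matters is the reference $a = e^{\beta m_{k}}$, which is the largest constant-in-$x_{k}$ value that still forces $u\ge 0$. This nonnegativity is essential, since $\psi(u)\le u^{2}e^{u}/2$ fails for $u<0$, and it is precisely what converts the symmetric variational inequality for entropy into the asymmetric exponential estimate underlying the one-sided Bernstein-type tail bound the paper is after.
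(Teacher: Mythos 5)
Your proof is correct, and it reaches the bound by a genuinely different one-variable argument than the paper uses. Both you and the paper start from tensorization of entropy (the paper's Theorem \ref{Theorem Entropy subadditivity}), but after that you diverge: the paper routes everything through the fluctuation representation $S_{k,f}(\beta)=\int_0^\beta\int_t^\beta\sigma_{k,sf}^2[f]\,ds\,dt$ (Proposition \ref{Proposition fluctuatiuon representation}, packaged as inequality (\ref{useful inequality})), then bounds $\sigma_{k,sf}^2[f]\le E_{k,s(f-\inf_k f)}[(f-\inf_k f)^2]$ and uses a Chebyshev/FKG correlation argument to show this is nondecreasing in $s$, so the double integral produces the factor $\beta^2/2$. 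You instead apply the Gibbs/duality variational inequality $\mathrm{Ent}(Y)\le E[Y\ln(Y/a)-Y+a]$ directly at the conditional level with the reference $a=e^{\beta m_k}$, $m_k=\inf_{y}S_y^k f\in\mathcal{A}_k(\Omega)$, reduce to $E_k[e^{\beta m_k}\psi(u)]$ with $u=\beta(f-m_k)\ge 0$, and finish with the scalar bound $\psi(u)\le u^2e^u/2$. Your choice $a=e^{\beta m_k}$ plays exactly the role that the substitution $f_k=\inf_k f$ plays in the paper's trivial variance bound (\ref{Trivial variance bound}): it forces the centered quantity to be nonnegative, which is what makes the one-sided estimate work. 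What you gain is a shorter, more self-contained derivation that avoids the monotonicity/correlation lemma and is closer to the classical entropy-method presentation of Boucheron--Lugosi--Massart; what the paper gains is uniformity, since the same master inequality (\ref{useful inequality}) also produces Lemma \ref{Lemma Bennett variance sum} and is the engine behind the "thermodynamic" formulation the author favours.
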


We use this to derive the following property of weakly self-bounded
functions, which, together with Proposition \ref{Proposition Selfbounded
Jackknife} below, gives the concentration property of $\Sigma ^{2}\left(
f\right) $ alluded to in the introduction.

\begin{lemma}
\label{Lemma Selfbound}Suppose that 
\begin{equation}
Df\leq a^{2}~f.  \label{self boundedness assumption}
\end{equation}%
Then for $\beta \in \left( 0,2/a^{2}\right) $%
\begin{equation}
\ln E\left[ e^{\beta f}\right] \leq \frac{\beta Ef}{1-a^{2}\beta /2},
\label{Selfbound W}
\end{equation}
\end{lemma}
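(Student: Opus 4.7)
The plan is to run a Herbst-type argument on the logarithmic moment generating function $L(\beta) := \ln E[e^{\beta f}]$, feeding in Lemma \ref{Lemma Entropy bound Upper} together with the self-bounding hypothesis.

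First I would combine Lemma \ref{Lemma Entropy bound Upper} with the assumption $Df \leq a^{2} f$ to get, for every $\gamma > 0$,
\begin{equation*}
S_{f}(\gamma) \leq \frac{\gamma^{2}}{2}\, E_{\gamma f}[Df] \leq \frac{a^{2}\gamma^{2}}{2}\, E_{\gamma f}[f].
\end{equation*}
The key observation is that the tilted expectation $E_{\gamma f}[f]$ is exactly the derivative $L'(\gamma)$ of $L$, since $L'(\gamma) = Z_{\gamma f}^{-1}E[f e^{\gamma f}] = E_{\gamma f}[f]$.

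Next I would exploit the identity of Lemma \ref{Theorem basic Herbst argument}, rewritten as
\begin{equation*}
\frac{L(\beta)}{\beta} - Ef = \int_{0}^{\beta} \frac{S_{f}(\gamma)}{\gamma^{2}}\, d\gamma,
\end{equation*}
using the fact that $L(\beta)/\beta \to L'(0) = Ef$ as $\beta \to 0^{+}$. Plugging in the previous bound on $S_{f}(\gamma)/\gamma^{2}$ and applying the fundamental theorem of calculus to the integral $\int_{0}^{\beta}(a^{2}/2)L'(\gamma)\,d\gamma = (a^{2}/2)L(\beta)$ (note $L(0) = 0$), I obtain
\begin{equation*}
\frac{L(\beta)}{\beta} - Ef \leq \frac{a^{2}}{2}\, L(\beta).
\end{equation*}
Rearranging gives $L(\beta)(1 - a^{2}\beta/2) \leq \beta Ef$, and since $1 - a^{2}\beta/2 > 0$ exactly when $\beta < 2/a^{2}$, dividing through yields the desired bound \eqref{Selfbound W}.

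I do not anticipate a serious obstacle: the argument is essentially the standard Herbst trick, with the single twist that the self-bounding inequality on the second-order quantity $Df$ is converted into a first-order differential inequality for $L$ by recognizing $E_{\gamma f}[f]$ as $L'(\gamma)$. The only technical point worth checking is the limiting behaviour of $L(\beta)/\beta$ at $\beta = 0$, which is handled by noting that $f$ is bounded (being in $\mathcal{A}(\Omega)$), so $L$ is smooth at the origin with $L(0)=0$ and $L'(0)=Ef$, and the integral representation is valid.
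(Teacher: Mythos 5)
Your proof is correct and follows essentially the same route as the paper: both combine Lemma \ref{Theorem basic Herbst argument}, Lemma \ref{Lemma Entropy bound Upper}, the self-bounding hypothesis, and the identity $E_{\gamma f}[f] = (d/d\gamma)\ln E e^{\gamma f}$ to reduce the integral of $S_f(\gamma)/\gamma^2$ to $(a^2/2)\ln E e^{\beta f}$, and then rearrange. The only difference is cosmetic — you phrase it via the normalized cumulant $L(\beta)/\beta$ while the paper works directly with $\ln E[e^{\beta(f-Ef)}]$.
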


\begin{proof}
Using Lemma \ref{Theorem basic Herbst argument} and Lemma \ref{Lemma Entropy
bound Upper} and the weak self-boundedness assumption (\ref{self boundedness
assumption}) we have for $\beta >0$ that%
\begin{eqnarray*}
\ln E\left[ e^{\beta \left( f-E\left[ f\right] \right) }\right] &=&\beta
\int_{0}^{\beta }\frac{S_{f}\left( \gamma \right) }{\gamma ^{2}}d\gamma \leq 
\frac{\beta }{2}\int_{0}^{\beta }E_{\gamma f}\left[ Df\right] d\gamma \leq 
\frac{a^{2}\beta }{2}\int_{0}^{\beta }E_{\gamma f}\left[ f\right] d\gamma \\
&=&\frac{a^{2}\beta }{2}\ln Ee^{\beta f},
\end{eqnarray*}%
where the last identity follows from the fact that $E_{\gamma f}\left[ f%
\right] =\left( d/d\gamma \right) \ln Ee^{\gamma f}$. Thus%
\begin{equation*}
\ln E\left[ e^{\beta f}\right] \leq \frac{a^{2}\beta }{2}\ln Ee^{\beta
f}+\beta Ef,
\end{equation*}%
and rearranging this inequality for $\beta \in \left( 0,2/a^{2}\right) $
establishes the claim.\bigskip
\end{proof}

We also use the following decoupling technique: If $\mu $ and $\nu $ are two
probability measures and $\nu $ is absolutely continuous w.r.t. $\mu $ then
it is easy to show that%
\begin{equation*}
E_{\nu }g\leq KL\left( d\nu ,d\mu \right) +\ln E_{\mu }e^{g}.
\end{equation*}%
Applying this inequality when $\nu $ is the measure $Z_{\beta
f}^{-1}e^{\beta f}d\mu $ we obtain the following

\begin{lemma}
\label{Decoupling Lemma}We have for any $g\in \mathcal{A}\left( \Omega
\right) $ that%
\begin{equation}
E_{\beta f}\left[ g\right] \leq S_{f}\left( \beta \right) +\ln E\left[ e^{g}%
\right] .  \label{Decoupling Inequality}
\end{equation}%
\bigskip
\end{lemma}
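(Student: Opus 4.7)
The plan is to first establish the general Gibbs variational inequality $E_\nu g \leq KL(d\nu,d\mu) + \ln E_\mu e^g$ stated just before the lemma, and then specialize it to the tilted measure $d\nu = Z_{\beta f}^{-1} e^{\beta f} d\mu$. This is really the $\leq$-direction of the Donsker--Varadhan variational formula for relative entropy, so no serious difficulty is expected; the only step that requires a bit of care is matching the KL term to the entropy $S_f(\beta)$ as defined earlier.

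For the variational inequality, I would let $h = d\nu/d\mu$, assumed to exist by absolute continuity, and write
\begin{equation*}
E_\nu g - KL(d\nu, d\mu) = \int \bigl(g - \log h\bigr) h \, d\mu = \int \log\!\left(\frac{e^g}{h}\right) h \, d\mu.
\end{equation*}
Since $\log$ is concave and $h\,d\mu$ is a probability measure, Jensen's inequality gives
\begin{equation*}
\int \log\!\left(\frac{e^g}{h}\right) h \, d\mu \leq \log \int \frac{e^g}{h} h \, d\mu = \log \int e^g \, d\mu,
\end{equation*}
which is the claimed inequality. (On the set where $h=0$ the convention $0 \log 0 = 0$ makes the integrand vanish, and on the set where $h=0$ we simply drop the contribution from $e^g/h$, so the bound is unaffected.)

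It then remains to substitute the specific choice $d\nu = Z_{\beta f}^{-1} e^{\beta f} d\mu$. Here $h = Z_{\beta f}^{-1} e^{\beta f}$, so
\begin{equation*}
KL(d\nu, d\mu) = \int \bigl(\beta f - \ln Z_{\beta f}\bigr)\, Z_{\beta f}^{-1} e^{\beta f} \, d\mu = \beta E_{\beta f}[f] - \ln Z_{\beta f},
\end{equation*}
which is precisely the definition of $S_f(\beta)$ given in the preceding subsection. Meanwhile $E_\nu g = E_{\beta f}[g]$ by definition of the tilted expectation functional. Inserting both identities into the variational inequality yields $E_{\beta f}[g] \leq S_f(\beta) + \ln E[e^g]$, as required. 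The only potential obstacle is a measure-theoretic nuisance if $\nu$ fails to be absolutely continuous with respect to $\mu$, but since $e^{\beta f} > 0$ everywhere (as $f$ is bounded) this is automatic in the present setting.
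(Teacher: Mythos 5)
Your proof is correct and follows the same route the paper indicates: establish the Gibbs variational inequality $E_\nu g \le KL(d\nu,d\mu) + \ln E_\mu e^g$ by applying Jensen's inequality to the concave logarithm, then specialize to $d\nu = Z_{\beta f}^{-1}e^{\beta f}d\mu$ and match the KL term to $S_f(\beta)$. The paper only states the variational inequality as ``easy to show,'' and your treatment of the $h=0$ set and the observation that $e^{\beta f}>0$ ensures absolute continuity correctly fill in those details.
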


\subsection{A concentration inequality}

We now use the tools of the previous section to prove an intermediate
concentration inequality (Proposition \ref{Proposition concentration
inequality}) in the case that $\Sigma ^{2}\left( f\right) $ satisfies the
self-bounding hypothesis of Lemma \ref{Lemma Selfbound}. In the next section
we show that this condition is satisfied if $a$ is taken equal to the
interaction functional $J_{\mu }\left( f\right) $, and together the two
results then give Theorem \ref{Theorem Main}.

We need two more auxiliary results. Recall the definition of the function $%
\psi \left( t\right) :=te^{t}-e^{t}+1$.

\begin{lemma}
\label{LemmaComplicatedInequality}For any $a\geq 0$ and $0\leq \gamma
<1/\left( 1/3+a/2\right) $ we have

(i) $a\sqrt{\psi \left( \gamma \right) /2}<1$and

(ii)%
\begin{equation*}
\frac{\psi \left( \gamma \right) }{\gamma ^{2}\left( 1-a\sqrt{\psi \left(
\gamma \right) /2}\right) ^{2}}\leq \frac{1}{2\left( 1-\left( 1/3+a/2\right)
\gamma \right) ^{2}}.
\end{equation*}
\end{lemma}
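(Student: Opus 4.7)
The plan is to reduce both (i) and (ii) to the single one-variable bound
\begin{equation*}
\psi(\gamma) \le \frac{\gamma^{2}}{2(1-\gamma/3)^{2}} \qquad \text{for } 0 \le \gamma < 3.
\end{equation*}
I would establish this by checking that $h(\gamma) := \gamma^{2}/(2(1-\gamma/3)^{2}) - \psi(\gamma)$ vanishes at $\gamma = 0$ and has derivative
\begin{equation*}
h'(\gamma) = \frac{\gamma}{(1-\gamma/3)^{3}} - \gamma e^{\gamma} = \gamma\left[\frac{1}{(1-\gamma/3)^{3}} - e^{\gamma}\right],
\end{equation*}
which is nonnegative on $[0,3)$: after taking logarithms and substituting $y = \gamma/3$, the inequality $(1-\gamma/3)^{-3} \ge e^{\gamma}$ reduces to the elementary $1 - y \le e^{-y}$. (A parallel route is term-by-term comparison of Taylor coefficients, which reduces to $2 \cdot 3^{k-2} \le k!$ for $k \ge 2$.)

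Set $A := \sqrt{\psi(\gamma)/2}$ and $b := 1/3 + a/2$. Since $\gamma < 1/b \le 3$, the displayed bound applies, and taking its square root gives $A \le \gamma/(2(1-\gamma/3))$, hence $aA \le (a/2)\gamma/(1-\gamma/3)$. The hypothesis $\gamma < 1/b$ rearranges exactly to $(a/2)\gamma < 1 - \gamma/3$, which forces $aA < 1$ and proves (i).

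For (ii), the same estimate yields
\begin{equation*}
1 - aA \;\ge\; 1 - \frac{(a/2)\gamma}{1-\gamma/3} \;=\; \frac{1 - b\gamma}{1-\gamma/3} \;>\; 0,
\end{equation*}
so $(1-aA)^{-2} \le (1-\gamma/3)^{2}/(1-b\gamma)^{2}$. Multiplying this by the displayed one-variable bound rewritten as $\psi(\gamma)/\gamma^{2} \le 1/(2(1-\gamma/3)^{2})$, the two factors of $(1-\gamma/3)^{2}$ cancel and (ii) drops out. The only real work is the displayed $\psi$-bound; (i) and (ii) then follow by elementary algebra, with the constant $b = 1/3 + a/2$ emerging naturally as precisely the threshold at which $1 - aA$ stays positive.
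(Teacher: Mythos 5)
Your proof is correct. Both you and the paper pivot on the same one-variable bound $\psi(\gamma)/\gamma^{2}\le 1/\bigl(2(1-\gamma/3)^{2}\bigr)$ for $\gamma\in[0,3)$ (the paper's inequality (\ref{explemma a0})), and both extract from it the estimate $\sqrt{\psi(\gamma)/2}\le \gamma/(2(1-\gamma/3))$, which immediately gives (i). Your proof of the key bound itself differs in presentation -- you differentiate $h(\gamma)=\gamma^{2}/(2(1-\gamma/3)^{2})-\psi(\gamma)$ and reduce $h'\ge 0$ to $1-y\le e^{-y}$, whereas the paper compares Taylor coefficients term by term -- but these are equivalent in difficulty, and you note the power-series route as an alternative. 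The genuine divergence is in (ii): the paper rewrites (ii) as $\psi(\gamma)/\gamma^{2}\le(1-aA)^{2}/\bigl(2(c-ad)^{2}\bigr)$ and then computes $\partial/\partial a$ of the right-hand side to show it is non-decreasing in $a$, reducing to the already-proven $a=0$ case. You instead observe that $1-aA\ge(1-b\gamma)/(1-\gamma/3)>0$, square and invert, and multiply this by the $\psi$-bound so the $(1-\gamma/3)^{2}$ factors cancel. This is a more direct route that avoids the derivative computation in $a$ entirely and makes the role of the threshold $\gamma<1/b$ transparent; it buys simplicity at no cost. Both proofs are sound, but yours is the shorter argument for (ii).
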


\begin{proof}
If $0\leq \gamma <1/\left( 1/3+a/2\right) $ and $a\geq 0$ then $\gamma <3$.
In this case we have the two convergent power series representations%
\begin{eqnarray*}
\frac{1}{2\left( 1-\gamma /3\right) ^{2}} &=&\sum_{n=0}^{\infty }\frac{n+1}{2%
}3^{-n}\gamma ^{n}=:\sum_{n=0}^{\infty }b_{n}\gamma ^{n} \\
\frac{\gamma e^{\gamma }-e^{\gamma }+1}{\gamma ^{2}} &=&\sum_{n=0}^{\infty }%
\frac{1}{\left( n+2\right) n!}\gamma ^{n}=:\sum_{n=0}^{\infty }c_{n}\gamma
^{n}.
\end{eqnarray*}%
Now $b_{0}=c_{0}=1/2$ by inspection and for $n\geq 1$ 
\begin{equation*}
\frac{b_{n}}{c_{n}}=\frac{\left( n+2\right) !}{2\times 3^{n}}=\frac{1\times 2%
}{2}\times \prod_{k=1}^{n}\left( \frac{k+2}{3}\right) \geq 1,
\end{equation*}%
so that $b_{n}\geq c_{n}$ for all non-negative $n$. Term by term comparison
of the two power series gives 
\begin{equation}
\frac{\psi \left( \gamma \right) }{\gamma ^{2}}=\frac{\gamma e^{\gamma
}-e^{\gamma }+1}{\gamma ^{2}}\leq \frac{1}{2\left( 1-\gamma /3\right) ^{2}},
\label{explemma a0}
\end{equation}%
which is (ii)\ in the case that $a=0$.

It also gives us for general $a>0$ that 
\begin{equation}
\sqrt{\psi \left( \gamma \right) /2}\leq \frac{\gamma }{2\left( 1-\gamma
/3\right) }<a^{-1},  \label{explemma a1}
\end{equation}%
since $\gamma <1/\left( 1/3+a/2\right) \implies \gamma /\left( 2\left(
1-\gamma /3\right) \right) <a^{-1}$. This proves (i).

(ii) is equivalent to%
\begin{equation*}
\frac{\psi \left( \gamma \right) }{\gamma ^{2}}\leq \frac{\left( 1-a\sqrt{%
\psi \left( \gamma \right) /2}\right) ^{2}}{2\left( \left( 1-\gamma
/3\right) -a\gamma /2\right) ^{2}}.
\end{equation*}%
To complete the proof it suffices by (\ref{explemma a0}) to show that the
right hand side above is, for fixed $\gamma ,$ a non-decreasing function of $%
a\in \left[ 0,2\left( 1-\gamma /3\right) /\gamma \right) $. Let $b:=\sqrt{%
\psi \left( \gamma \right) /2}$, $c:=\left( 1-\gamma /3\right) $ and $%
d:=\gamma /2$, so the expression in question becomes $\left( 1-ab\right)
^{2}/\left( 2\left( c-ad\right) ^{2}\right) $. Calculus gives%
\begin{equation*}
\frac{d}{da}\frac{\left( 1-ab\right) ^{2}}{2\left( c-ad\right) ^{2}}=\frac{%
\left( 1-ab\right) \left( d-bc\right) }{\left( c-ad\right) ^{3}}.
\end{equation*}%
But $c-ad=1-\left( 1/3+a/2\right) \gamma >0$ by assumption. Also $1-ab>0$ by
(i) and, using (\ref{explemma a1}),%
\begin{eqnarray*}
d-bc &=&\frac{\gamma }{2}-\sqrt{\psi \left( \gamma \right) /2}\left(
1-\gamma /3\right) \\
&\geq &\frac{\gamma }{2}-\frac{\gamma \left( 1-\gamma /3\right) }{2\left(
1-\gamma /3\right) }=0.
\end{eqnarray*}%
The expression $\left( 1-ab\right) ^{2}/\left( 2\left( c-ad\right)
^{2}\right) $ is therefore non-decreasing in $a$.
\end{proof}

We finally need an optimization lemma

\begin{lemma}
\label{Lemma Optimization}Let $C$ and $b$ denote two positive real numbers, $%
t>0$. Then%
\begin{equation}
\inf_{\beta \in \lbrack 0,1/b)}\left( -\beta t+\frac{C\beta ^{2}}{1-b\beta }%
\right) \leq \frac{-t^{2}}{2\left( 2C+bt\right) }.
\label{False Lemma Inequality 1}
\end{equation}
\end{lemma}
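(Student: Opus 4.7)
The plan is to bypass the infimum by choosing a specific admissible $\beta$ and verifying that the resulting value already meets the claimed upper bound. A natural candidate comes from formally differentiating the expression in $\beta$ and picking a value that makes the bookkeeping clean; I would try
\[
\beta^{\ast} = \frac{t}{2C + bt}.
\]

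First I would check that $\beta^{\ast} \in [0, 1/b)$. Since $C, b, t > 0$, we have $\beta^{\ast} > 0$ and $\beta^{\ast} = t/(2C+bt) < t/(bt) = 1/b$, so this $\beta^{\ast}$ is admissible and taking the infimum over $[0,1/b)$ can only decrease the value of the expression relative to its value at $\beta^{\ast}$.

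Next I would substitute $\beta^{\ast}$ and simplify. Compute $1 - b\beta^{\ast} = 2C/(2C+bt)$, so
\[
\frac{C(\beta^{\ast})^{2}}{1 - b\beta^{\ast}}
= \frac{C \cdot t^{2}/(2C+bt)^{2}}{2C/(2C+bt)}
= \frac{t^{2}}{2(2C+bt)},
\]
while $-\beta^{\ast} t = -t^{2}/(2C+bt)$. Adding these gives
\[
-\beta^{\ast} t + \frac{C(\beta^{\ast})^{2}}{1-b\beta^{\ast}}
= -\frac{t^{2}}{2C+bt} + \frac{t^{2}}{2(2C+bt)}
= -\frac{t^{2}}{2(2C+bt)},
\]
which matches the right-hand side of the claimed inequality exactly. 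Taking the infimum over all admissible $\beta$ can only make the left side smaller, establishing the bound.

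There is essentially no obstacle here: the only judgement call is guessing the right $\beta^{\ast}$, and in fact this choice gives equality so the inequality is tight. An alternative derivation would minimize $-\beta t + C\beta^{2}/(1-b\beta)$ exactly by calculus, but that leads to a quadratic in $\beta$ with a messier optimum; the trick of picking $\beta^{\ast} = t/(2C+bt)$ — which is the standard Bernstein choice for an expression of this shape — yields a one-line proof.
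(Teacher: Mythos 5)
Your proof is correct: the substitution $\beta^{\ast}=t/(2C+bt)$ is admissible, and plugging in gives exactly $-t^{2}/(2(2C+bt))$, so the infimum is at most this value. The paper itself does not reprove the lemma but defers to Lemma 12 of the cited reference (Maurer 2006), where the argument is the same standard Bernstein-type substitution; your one-line derivation is exactly what is needed and matches that approach.
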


The proof of this lemma can be found in \cite{Maurer 2006} (Lemma
12).\bigskip

\begin{proposition}
\label{Proposition concentration inequality}Suppose that $f\in \mathcal{A}%
\left( \Omega \right) $ is such that $\forall k$, $f-E_{k}\left( f\right)
\leq 1$, and that 
\begin{equation*}
D\left( \Sigma ^{2}\left( f\right) \right) \leq a^{2}~\Sigma ^{2}\left(
f\right) ,
\end{equation*}%
with $a\geq 0$. Then for all $t>0$%
\begin{equation*}
\Pr \left\{ f-Ef>t\right\} \leq \exp \left( \frac{-t^{2}}{2E\left[ \Sigma
^{2}\left( f\right) \right] +\left( 2/3+a\right) t}\right) .
\end{equation*}
\end{proposition}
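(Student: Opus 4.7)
The plan is to combine Bennett's entropy bound (Lemma \ref{Lemma Bennett variance sum}) with the Decoupling Lemma to convert the biased expectation $E_{\beta f}[\Sigma^{2}(f)]$ into an ordinary expectation, at the cost of a penalty $S_{f}(\beta)$ that can be absorbed into the left-hand side. The self-boundedness hypothesis on $\Sigma^{2}(f)$ is then used via Lemma \ref{Lemma Selfbound} to control the ordinary moment generating function of $\Sigma^{2}(f)$; finally, Herbst's formula together with Lemma \ref{LemmaComplicatedInequality} and Lemma \ref{Lemma Optimization} produce the Bernstein-type exponent.

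Concretely, for a free parameter $\alpha\in(0,2/a^{2})$, the Decoupling Lemma applied to $g=\alpha\Sigma^{2}(f)$ gives
\begin{equation*}
E_{\beta f}[\Sigma^{2}(f)] \leq \frac{S_{f}(\beta)}{\alpha} + \frac{1}{\alpha}\ln E\bigl[e^{\alpha\Sigma^{2}(f)}\bigr],
\end{equation*}
and Lemma \ref{Lemma Selfbound} applied to $\Sigma^{2}(f)$ bounds the last logarithm by $\alpha E[\Sigma^{2}(f)]/(1-a^{2}\alpha/2)$. Feeding this back into Lemma \ref{Lemma Bennett variance sum} and solving the resulting linear inequality for $S_{f}(\beta)$ yields
\begin{equation*}
S_{f}(\beta)\Bigl(1-\tfrac{\psi(\beta)}{\alpha}\Bigr) \leq \frac{\psi(\beta)E[\Sigma^{2}(f)]}{1-a^{2}\alpha/2},
\end{equation*}
valid as soon as $\alpha>\psi(\beta)$. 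Optimizing the denominator over $\alpha$ (the optimum occurs at $\alpha=\sqrt{2\psi(\beta)}/a$ and is clean because $(1-p/\alpha)(1-q\alpha)$ is maximized at $\alpha=\sqrt{p/q}$ with value $(1-\sqrt{pq})^{2}$) gives the key bound
\begin{equation*}
S_{f}(\gamma) \leq \frac{\psi(\gamma)\,E[\Sigma^{2}(f)]}{\bigl(1-a\sqrt{\psi(\gamma)/2}\bigr)^{2}}.
\end{equation*}

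Now I would insert this into Lemma \ref{Theorem basic Herbst argument}. By part (ii) of Lemma \ref{LemmaComplicatedInequality} the integrand $S_{f}(\gamma)/\gamma^{2}$ is at most $E[\Sigma^{2}(f)]/\bigl(2(1-(1/3+a/2)\gamma)^{2}\bigr)$ on the admissible range, and the elementary primitive of $(1-b\gamma)^{-2}$ produces
\begin{equation*}
\ln E\bigl[e^{\beta(f-Ef)}\bigr] \leq \frac{(E[\Sigma^{2}(f)]/2)\,\beta^{2}}{1-(1/3+a/2)\beta}
\end{equation*}
for $\beta\in[0,1/(1/3+a/2))$. Combining with Markov's inequality and invoking Lemma \ref{Lemma Optimization} with $C=E[\Sigma^{2}(f)]/2$ and $b=1/3+a/2$ gives the exponent $-t^{2}/(2E[\Sigma^{2}(f)]+(2/3+a)t)$, as desired.

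The main obstacle is the decoupling step and the choice of $\alpha$: one has to find an $\alpha$ that simultaneously (a) makes the coefficient $1-\psi(\beta)/\alpha$ on $S_{f}(\beta)$ positive so that one can actually solve for $S_{f}(\beta)$, and (b) keeps $1-a^{2}\alpha/2$ bounded away from zero. The symmetric optimum $\alpha=\sqrt{2\psi(\beta)}/a$ is what produces the awkward $(1-a\sqrt{\psi(\gamma)/2})^{2}$ denominator — and it is precisely this expression that Lemma \ref{LemmaComplicatedInequality}(ii) is tailored to replace by the clean $2(1-(1/3+a/2)\gamma)^{2}$, after which the remaining steps are routine. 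Everything else — Herbst, the integration, and the Bernstein-style optimization in $\beta$ — is essentially bookkeeping.
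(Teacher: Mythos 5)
Your proposal is correct and follows essentially the same route as the paper: Bennett's entropy bound plus the decoupling lemma to pass to an ordinary MGF of $\Sigma^{2}(f)$, Lemma \ref{Lemma Selfbound} to control that MGF, then Lemma \ref{LemmaComplicatedInequality}(ii), Herbst's formula, and Lemma \ref{Lemma Optimization}. The only cosmetic difference is that you optimize a free decoupling parameter $\alpha$ whereas the paper simply declares $\theta=\sqrt{2\psi(\gamma)}/a$ upfront — but these coincide, and both require the observations (your implicit use of Lemma \ref{LemmaComplicatedInequality}(i)) that this choice satisfies $\psi(\gamma)<\theta<2/a^{2}$, which you should state explicitly.
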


\begin{proof}
By a simple limiting argument we may assume that $a>0$. Now let $0<\gamma
\leq \beta <1/\left( 1/3+a/2\right) $. By Lemma \ref%
{LemmaComplicatedInequality} (i) $\theta :=\left( 1/a\right) \sqrt{2\psi
\left( \gamma \right) }<2/a^{2}$ and also $\theta >\sqrt{\psi \left( \gamma
\right) /2}\sqrt{2\psi \left( \gamma \right) }=\psi \left( \gamma \right) $.
By Lemma \ref{Lemma Bennett variance sum} 
\begin{eqnarray*}
S_{f}\left( \gamma \right) &\leq &\psi \left( \gamma \right) E_{\gamma f} 
\left[ \Sigma ^{2}\left( f\right) \right] =\theta ^{-1}\psi \left( \gamma
\right) E_{\gamma f}\left[ \theta \Sigma ^{2}\left( f\right) \right] \\
&\leq &\theta ^{-1}\psi \left( \gamma \right) \left( S_{f}\left( \gamma
\right) +\ln E\left[ e^{\theta \Sigma ^{2}\left( f\right) }\right] \right) ,
\end{eqnarray*}%
where the second inequality follows from Lemma \ref{Decoupling Lemma}.
Subtracting $\theta ^{-1}\psi \left( \gamma \right) S_{f}\left( \gamma
\right) $, multiplying by $\theta $ and using Lemma \ref{Lemma Selfbound}
together with the assumed self-boundedness of $\Sigma ^{2}\left( f\right) $
gives us%
\begin{equation*}
S_{f}\left( \gamma \right) \left( \theta -\psi \left( \gamma \right) \right)
\leq \psi \left( \gamma \right) \ln E\left[ e^{\theta \Sigma ^{2}\left(
f\right) }\right] \leq \frac{\theta \psi \left( \gamma \right) }{%
1-a^{2}\theta /2}E\left[ \Sigma ^{2}\left( f\right) \right] ,
\end{equation*}%
which holds, since $\theta <2/a^{2}$. Since $\theta >\psi \left( \gamma
\right) $ we can divide by $\theta -\psi \left( \gamma \right) $ to
rearrange and then use the definition of $\theta $ to obtain 
\begin{equation*}
S_{f}\left( \gamma \right) \leq \frac{\psi \left( \gamma \right) }{\left( 1-a%
\sqrt{\psi \left( \gamma \right) /2}\right) ^{2}}E\left[ \Sigma ^{2}\left(
f\right) \right] .
\end{equation*}%
By Lemma \ref{LemmaComplicatedInequality} (ii) for $\beta <1/\left(
1/3+a/2\right) $%
\begin{eqnarray*}
\int_{0}^{\beta }\frac{S_{f}\left( \gamma \right) d\gamma }{\gamma ^{2}}
&\leq &E\left[ \Sigma ^{2}\left( f\right) \right] \int_{0}^{\beta }\frac{%
\psi \left( \gamma \right) }{\gamma ^{2}\left( 1-a\sqrt{\psi \left( \gamma
\right) /2}\right) ^{2}}d\gamma \\
&\leq &E\left[ \Sigma ^{2}\left( f\right) \right] \int_{0}^{\beta }\frac{%
d\gamma }{2\left( 1-\left( 1/3+a/2\right) \gamma \right) ^{2}} \\
&=&\frac{E\left[ \Sigma ^{2}\left( f\right) \right] }{2}\frac{\beta }{%
1-\left( 1/3+a/2\right) \beta }
\end{eqnarray*}%
and from Lemma \ref{Theorem basic Herbst argument}%
\begin{eqnarray*}
\Pr \left\{ f-Ef>t\right\} &\leq &\inf_{\beta >0}\exp \left( \beta
\int_{0}^{\beta }\frac{S_{f}\left( \gamma \right) }{\gamma ^{2}}d\gamma
-\beta t\right) \\
&\leq &\inf_{\beta \in \left( 0,1/\left( 1/3+a/2\right) \right) }\exp \left( 
\frac{E\left[ \Sigma ^{2}\left( f\right) \right] }{2}\frac{\beta ^{2}}{%
1-\left( 1/3+a/2\right) \beta }-\beta t\right) \\
&\leq &\exp \left( \frac{-t^{2}}{2\left( E\left[ \Sigma ^{2}\left( f\right) %
\right] +\left( 1/3+a/2\right) t\right) }\right) ,
\end{eqnarray*}%
where we used Lemma \ref{Lemma Optimization} in the last step.\bigskip
\end{proof}

\subsection{Self-boundedness of the sum of conditional variances\label%
{Subsection Jackknife Selfbound}}

We record some obvious, but potentially confusing properties of the
substitution operator. For $k\in \left\{ 1,...,n\right\} $ and $y\in \Omega
_{k}$ the operator $S_{y}^{k}$ is a homomorphism of $\mathcal{A}\left(
\Omega \right) $ and the identity on $\mathcal{A}_{k}\left( \Omega \right) $%
. If $l\neq k$ it commutes with $S_{z}^{l}$ and with $E_{l}$. Most
importantly%
\begin{equation*}
S_{y}^{k}\sigma _{l}^{2}\left( f\right) =\frac{1}{2}S_{y}^{k}E_{\left(
z,z^{\prime }\right) \sim \mu _{l}^{2}}\left[ \left( D_{z,z^{\prime
}}^{l}f\right) ^{2}\right] =\frac{1}{2}E_{\left( z,z^{\prime }\right) \sim
\mu _{l}^{2}}\left[ \left( D_{z,z^{\prime }}^{l}S_{y}^{k}f\right) ^{2}\right]
=\sigma _{l}^{2}\left( S_{y}^{k}f\right) .
\end{equation*}%
Note however that for $l=k$ we get $S_{y}^{k}S_{z}^{k}=S_{z}^{k}$ and $%
S_{y}^{k}E_{k}=E_{k}$ and $S_{y}^{k}\sigma _{k}^{2}=\sigma _{k}^{2}$,
because $S_{z}^{k}$, $E_{k}$ and $\sigma _{k}^{2}$ map to $\mathcal{A}%
_{k}\left( \Omega \right) $.

\begin{proposition}
\label{Proposition Selfbounded Jackknife}We have $D\left( \Sigma ^{2}\left(
f\right) \right) \leq J_{\mu }\left( f\right) ^{2}~\Sigma ^{2}\left(
f\right) $ for any $f\in \mathcal{A}\left( \Omega \right) $.
\end{proposition}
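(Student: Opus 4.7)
The plan is to establish the inequality pointwise at each $\mathbf{x} \in \Omega$. Write $V = \Sigma^2(f)$. By the commutation rules recalled at the start of the subsection, $S_z^l \sigma_l^2(f) = \sigma_l^2(f)$ while $S_z^l \sigma_k^2(f) = \sigma_k^2(S_z^l f)$ for $k \neq l$, so the $k = l$ summand cancels:
\begin{equation*}
V - S_z^l V = \sum_{k \neq l}\bigl(\sigma_k^2(f) - \sigma_k^2(S_z^l f)\bigr).
\end{equation*}
Since $V - \inf_z S_z^l V = \sup_z (V - S_z^l V) \geq 0$, the task reduces to upper bounding the inner difference $\sigma_k^2(f) - \sigma_k^2(S_z^l f)$ for $k \neq l$.

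Next I would exploit the algebraic identity $a^2 - b^2 = 2a(a-b) - (a-b)^2$. For $k \neq l$ the operators $D^k_{y,y'}$ and $S_z^l$ commute, so setting $\phi = D^k_{y,y'} f$ and $\psi = \phi - S_z^l \phi = D^k_{y,y'}(f - S_z^l f)$, we have $\phi^2 - (S_z^l \phi)^2 = 2\phi\psi - \psi^2$. Averaging over $(y,y') \sim \mu_k^2$ and using $\tfrac12 E_{(y,y')}[(D^k_{y,y'} g)^2] = \sigma_k^2(g)$ gives
\begin{equation*}
\sigma_k^2(f) - \sigma_k^2(S_z^l f) = E_{(y,y')}[\phi \psi] - \sigma_k^2(f - S_z^l f) \leq E_{(y,y')}[\phi \psi],
\end{equation*}
the crucial move being to simply discard the nonnegative subtracted term. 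Cauchy--Schwarz in $(y,y')$ then yields $E_{(y,y')}[\phi \psi] \leq 2\sigma_k(f)\sigma_k(f - S_z^l f)$.

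The third step is to sum over $k \neq l$ and apply Cauchy--Schwarz a second time, now over the index $k$, using $\sum_{k \neq l}\sigma_k^2(f) \leq V$:
\begin{equation*}
V - S_z^l V \leq 2\sqrt{V}\sqrt{\sum_{k \neq l}\sigma_k^2(f - S_z^l f)}.
\end{equation*}
The right-hand side is nonnegative, so taking $\sup_z$ (which passes through the square root), squaring, and summing over $l$ gives the pointwise bound
\begin{equation*}
D V \leq 4 V \sum_l \sup_z \sum_{k \neq l} \sigma_k^2(f - S_z^l f).
\end{equation*}
Bounding the second factor by its supremum over $\mathbf{x} \in \Omega$, which by definition equals $J_\mu(f)^2/4$, finishes the proof.

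The main obstacle is really just the sign trick in the second step: writing $\phi^2 - (S_z^l \phi)^2$ as $2\phi\psi - \psi^2$ (so that a nonnegative correction can be thrown away) rather than as the plain product $\psi(\phi + S_z^l \phi)$ is what keeps the constant tight and produces exactly the factor of $2$ that appears in the definition of $J_\mu$. Beyond this, the argument is two applications of Cauchy--Schwarz together with a careful interchange of $\sup_z$ with the square root.
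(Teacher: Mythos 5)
Your argument is correct and reaches the same pointwise bound as the paper, but by a genuinely different algebraic route. The paper fixes $z_l$ to be a minimizer of $z\mapsto S^l_z\Sigma^2(f)$, expands $(D^k f)^2-(D^k S^l_{z_l}f)^2$ as the plain product $(D^kf-D^kS^l_{z_l}f)(D^kf+D^kS^l_{z_l}f)$, applies a single Cauchy--Schwarz jointly over $k$ and $(y,y')$, and then bounds the second factor via $(a+b)^2\leq 2a^2+2b^2$ together with the observation $S^l_{z_l}\Sigma^2(f)=\inf_z S^l_z\Sigma^2(f)\leq\Sigma^2(f)$. Your route uses the identity $\phi^2-(S^l_z\phi)^2=2\phi\psi-\psi^2$ and drops the nonnegative $\psi^2$, followed by two sequential Cauchy--Schwarz applications. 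The gain is that $\sigma_k(S^l_zf)$ (or $S^l_z\Sigma^2(f)$) never appears in your intermediate bounds, so you work with arbitrary $z$ throughout and never need to invoke the optimality of $z_l$; that is a genuine simplification. One caveat to your closing remark: the plain-product route does yield the same constant, because the optimality step $S^l_{z_l}\Sigma^2(f)\leq\Sigma^2(f)$ does precisely the work that discarding $-\psi^2$ does for you. So the sign trick does not tighten the constant relative to the paper; its real payoff is eliminating the reliance on the minimizer.
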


\begin{proof}
Fix $\mathbf{x}\in \Omega $. Below all members of $\mathcal{A}\left( \Omega
\right) $ are understood as evaluated on $\mathbf{x}$. For $l\in \left\{
1,...,n\right\} $ let $z_{l}\in \Omega _{l}$ be a minimizer in $z$ of $%
S_{z}^{l}\Sigma ^{2}\left( f\right) $ (existence is assumed for simplicity,
an approximate minimizer would also work), so that%
\begin{equation*}
\inf_{z\in \Omega _{l}}S_{z}^{l}\Sigma ^{2}\left( f\right)
=S_{z_{l}}^{l}\Sigma ^{2}\left( f\right) =\sum_{k}S_{z_{l}}^{l}\sigma
_{k}^{2}\left( f\right) =\sigma _{l}^{2}\left( f\right) +\sum_{k:k\neq
l}S_{z_{l}}^{l}\sigma _{k}^{2}\left( f\right) ,
\end{equation*}%
where we used the fact that $S_{z_{l}}^{l}\sigma _{l}^{2}\left( f\right)
=\sigma _{l}^{2}\left( f\right) $, because $\sigma _{l}^{2}\left( f\right)
\in \mathcal{A}_{l}\left( \Omega \right) $. Then%
\begin{eqnarray*}
D\left( \Sigma ^{2}\left( f\right) \right)  &=&\sum_{l}\left( \Sigma
^{2}\left( f\right) -\inf_{z_{l}\in \Omega _{l}}S_{z}^{l}\Sigma ^{2}\left(
f\right) \right) ^{2} \\
&=&\sum_{l}\left( \sum_{k}\sigma _{k}^{2}\left( f\right) -\sigma
_{l}^{2}\left( f\right) -\sum_{k:k\neq l}S_{z_{l}}^{l}\sigma _{k}^{2}\left(
f\right) \right) ^{2} \\
&=&\sum_{l}\left( \sum_{k:k\neq l}\left( \sigma _{k}^{2}\left( f\right)
-S_{z_{l}}^{l}\sigma _{k}^{2}\left( f\right) \right) \right) ^{2}.
\end{eqnarray*}%
This step gave us a sum over $k\neq l$, which is important, because it
allows us to use the commutativity properties mentioned above. Then, using $%
2\sigma _{k}^{2}\left( f\right) =E_{\left( y,y^{\prime }\right) \sim \mu
_{k}^{2}}\left( D_{y,y^{\prime }}^{k}f\right) ^{2}$, we get%
\begin{eqnarray*}
4D\left( \Sigma ^{2}\left( f\right) \right)  &=&\sum_{l}\left( \sum_{k:k\neq
l}E_{\left( y,y^{\prime }\right) \sim \mu _{k}^{2}}\left( D_{y,y^{\prime
}}^{k}f\right) ^{2}-S_{z_{l}}^{l}E_{\left( y,y^{\prime }\right) \sim \mu
_{k}^{2}}\left( D_{y,y^{\prime }}^{k}f\right) ^{2}\right) ^{2} \\
&=&\sum_{l}\left( \sum_{k\neq l}E_{\left( y,y^{\prime }\right) \sim \mu
_{k}^{2}}\left[ \left( D_{y,y^{\prime }}^{k}f\right) ^{2}-\left(
D_{y,y^{\prime }}^{k}S_{z_{l}}^{l}f\right) ^{2}\right] \right) ^{2} \\
&=&\sum_{l}\left( \sum_{k\neq l}E_{\left( y,y^{\prime }\right) \sim \mu
_{k}^{2}}\left[ \left( D_{y,y^{\prime }}^{k}f-D_{y,y^{\prime
}}^{k}S_{z_{l}}^{l}f\right) \left( D_{y,y^{\prime }}^{k}f+D_{y,y^{\prime
}}^{k}S_{z_{l}}^{l}f\right) \right] \right) ^{2} \\
&\leq &\sum_{l}\sum_{k:k\neq l}E_{\left( y,y^{\prime }\right) \sim \mu
_{k}^{2}}\left[ D_{y,y^{\prime }}^{k}\left( f-S_{z_{l}}^{l}f\right) \right]
^{2}\sum_{k:k\neq l}E_{\left( y,y^{\prime }\right) \sim \mu _{k}^{2}}\left[
D_{y,y^{\prime }}^{k}f+D_{y,y^{\prime }}^{k}S_{z_{l}}^{l}f\right] ^{2}
\end{eqnarray*}%
by an application of Cauchy-Schwarz. Now, using $\left( a+b\right) ^{2}\leq
2a^{2}+2b^{2}$, we can bound the last sum independent of $l$ by%
\begin{eqnarray*}
&&\sum_{k:k\neq l}E_{\left( y,y^{\prime }\right) \sim \mu _{k}^{2}}\left[
D_{y,y^{\prime }}^{k}f+D_{y,y^{\prime }}^{k}S_{z_{l}}^{l}f\right] ^{2} \\
&\leq &\sum_{k:k\neq l}E_{\left( y,y^{\prime }\right) \sim \mu _{k}^{2}}
\left[ 2\left( D_{y,y^{\prime }}^{k}f\right) ^{2}+2\left( D_{y,y^{\prime
}}^{k}S_{z_{l}}^{l}f\right) ^{2}\right]  \\
&=&4\sum_{k:k\neq l}\sigma _{k}^{2}\left( f\right)
+4S_{z_{l}}^{l}\sum_{k:k\neq l}\sigma _{k}^{2}\left( f\right)  \\
&\leq &4\left( \Sigma ^{2}\left( f\right) +S_{z_{l}}^{l}\Sigma ^{2}\left(
f\right) \right) =4\left( \Sigma ^{2}\left( f\right) +\inf_{z\in \Omega
_{l}}S_{z}^{l}\Sigma ^{2}\left( f\right) \right) \leq 8\Sigma ^{2}\left(
f\right) ,
\end{eqnarray*}%
so that 
\begin{eqnarray*}
D\left( \Sigma ^{2}\left( f\right) \right)  &\leq &2\sum_{l}\sum_{k:k\neq
l}E_{\left( y,y^{\prime }\right) \sim \mu _{k}^{2}}\left[ D_{y,y^{\prime
}}^{k}\left( f-S_{z_{l}}^{l}f\right) \right] ^{2}\Sigma ^{2}\left( f\right) 
\\
&=&4\sum_{l}\sum_{k:k\neq l}\sigma _{k}^{2}\left( f-S_{z_{l}}^{l}f\right)
\Sigma ^{2}\left( f\right)  \\
&\leq &4\sup_{\mathbf{x}\in \Omega }\sum_{l}\sup_{z\in \Omega
_{l}}\sum_{k:k\neq l}\sigma _{k}^{2}\left( f-S_{z}^{l}f\right) \left( 
\mathbf{x}\right) \Sigma ^{2}\left( f\right) =J_{\mu }^{2}\left( f\right)
\Sigma ^{2}\left( f\right) .
\end{eqnarray*}
\end{proof}

Theorem \ref{Theorem Main} for the case $b=1$ is obtained by substituting $%
J_{\mu }\left( f\right) $ for $a$ in Proposition \ref{Proposition
concentration inequality}. The general case follows from rescaling and the
homogeneity properties of $\Sigma ^{2}$ and $J_{\mu }$.

Of the inequalities in (\ref{Interaction bound}) only the first one is not
completely obvious:%
\begin{eqnarray*}
J_{\mu }^{2}\left( f\right) &=&4\sup_{\mathbf{x}\in \Omega
}\sum_{l}\sup_{z\in \Omega _{l}}\sum_{k:k\neq l}\sigma _{k}^{2}\left(
f-S_{z}^{l}f\right) \left( \mathbf{x}\right) \\
&\leq &4\sup_{\mathbf{x}\in \Omega }\sum_{l}\sup_{z,z^{\prime }\in \Omega
_{l}}\sum_{k:k\neq l}\sigma _{k}^{2}\left( D_{z,z^{\prime }}^{l}f\right)
\left( \mathbf{x}\right) \\
&\leq &\sup_{\mathbf{x}\in \Omega }\sum_{l}\sup_{z,z^{\prime }\in \Omega
_{l}}\sum_{k:k\neq l}\sup_{y,y^{\prime }}\left( D_{y,y^{\prime
}}^{k}D_{z,z^{\prime }}^{l}f\right) ^{2}\left( \mathbf{x}\right) \leq
J^{2}\left( f\right) .
\end{eqnarray*}%
In the last inequality we used the fact that the variance of a random
variable is bounded by a quarter of the square of its range, so that $\sigma
_{k}^{2}\left( f\right) \leq \left( 1/4\right) \sup_{y,y^{\prime }}\left(
D_{y,y^{\prime }}^{k}f\right) ^{2}$ for all $f\in \mathcal{A}\left( \Omega
\right) $.

\subsection{The Bias in the Efron-Stein inequality\label{SubSection Houdre
bound}}

Since the published work of Houdr\'{e} \cite{Houdre 1997} assumes symmetric
functions and iid data, we give an independent derivation.

Let $X_{1},...,X_{n}$ be independent variables with $X_{i}$ distributed as $%
\mu _{i}$ in $\mathcal{\Omega }_{i}$, and let $X_{1}^{\prime
},...,X_{n}^{\prime }$ be independent copies thereof. Denote $X=\left(
X_{1},...,X_{n}\right) $ and $X^{\prime }=\left( X_{1}^{\prime
},...,X_{n}^{\prime }\right) $ and%
\begin{equation*}
X^{\left( i\right) }=\left( X_{1},...,X_{i-1},X_{i}^{\prime
},X_{i+1},...,X_{n}\right) \text{ and }X^{\left[ i\right] }=\left(
X_{1}^{\prime },...,X_{i}^{\prime },X_{i+1},...,X_{n}\right) .
\end{equation*}%
We also write $X^{\backslash i}$ for $X,$ but with the variable $X_{i}$
removed.

Let $f:\prod \mathcal{\Omega }_{i}\rightarrow 
\mathbb{R}
$ satisfy $E\left[ f\right] =0$. Then, writing $f\left( X\right) -f\left(
X^{\prime }\right) $ as a telescopic series, we get%
\begin{eqnarray*}
\sigma ^{2}\left( f\right)  &=&E\left[ f\left( X\right) \left( f\left(
X\right) -f\left( X^{\prime }\right) \right) \right]  \\
&=&\sum_{k=1}^{n}E\left[ f\left( X\right) \left( f\left( X^{\left[ k-1\right]
}\right) -f\left( X^{\left[ k\right] }\right) \right) \right]  \\
&=&-\sum_{k=1}^{n}E\left[ f\left( X^{\left( k\right) }\right) \left( f\left(
X^{\left[ k-1\right] }\right) -f\left( X^{\left[ k\right] }\right) \right) %
\right] ,
\end{eqnarray*}%
where the last identity is obtained by exchanging $X_{k}$ and $X_{k}^{\prime
}$. This gives the nice variance formula%
\begin{equation}
\sigma ^{2}\left( f\right) =\frac{1}{2}\sum_{k=1}^{n}E\left[ \left( f\left(
X\right) -f\left( X^{\left( k\right) }\right) \right) \left( f\left( X^{%
\left[ k-1\right] }\right) -f\left( X^{\left[ k\right] }\right) \right) %
\right] ,  \label{Chatterjee Formula}
\end{equation}%
appearantly due to Chatterjee. The Cauchy-Schwarz inequality then gives the
Efron-Stein inequality%
\begin{equation}
\sigma ^{2}\left( f\right) \leq E\left[ \Sigma ^{2}\left( f\right) \right] =%
\frac{1}{2}\sum_{k=1}^{n}E\left[ \left( f\left( X\right) -f\left( X^{\left(
k\right) }\right) \right) ^{2}\right] .  \label{EfronStein Inequality}
\end{equation}%
Now we look at the bias in this inequality.

\begin{theorem}
\label{Theorem Bias}With above conventions we have%
\begin{equation*}
E\left[ \Sigma ^{2}\left( f\right) \right] -\sigma ^{2}\left( f\right) \leq 
\frac{1}{4}\sum_{k,i:i\neq k}E\left[ \left( f\left( X\right) -f\left(
X^{\left( i\right) }\right) -f\left( X^{\left( k\right) }\right) +f\left(
X^{\left( k\right) \left( i\right) }\right) \right) ^{2}\right] .
\end{equation*}
\end{theorem}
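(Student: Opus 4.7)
The plan is to reduce the bias to a clean algebraic identity and then apply the Efron--Stein inequality a second time, to a cleverly chosen auxiliary function. Set $T_{k}:=f(X)-f(X^{(k)})$ and $\tilde{T}_{k}:=f(X^{[k-1]})-f(X^{[k]})$. Subtracting the Chatterjee formula (\ref{Chatterjee Formula}) from the Efron--Stein identity (\ref{EfronStein Inequality}) gives
\begin{equation*}
2\bigl(E[\Sigma^{2}(f)]-\sigma^{2}(f)\bigr)=\sum_{k=1}^{n}E\bigl[T_{k}(T_{k}-\tilde{T}_{k})\bigr].
\end{equation*}
Exchanging $X_{j}\leftrightarrow X_{j}^{\prime}$ for $j<k$ shows that $T_{k}$ and $\tilde{T}_{k}$ are identically distributed, so $E[T_{k}^{2}]=E[\tilde{T}_{k}^{2}]$; polarising then yields $E[T_{k}(T_{k}-\tilde{T}_{k})]=\tfrac{1}{2}E[(T_{k}-\tilde{T}_{k})^{2}]$. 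I thus obtain the key identity
\begin{equation*}
E[\Sigma^{2}(f)]-\sigma^{2}(f)=\tfrac{1}{4}\sum_{k=1}^{n}E\bigl[(T_{k}-\tilde{T}_{k})^{2}\bigr].
\end{equation*}

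Next I would bound each $E[(T_{k}-\tilde{T}_{k})^{2}]$ by a second use of Efron--Stein applied to an auxiliary function on fewer variables. For fixed $k$, treat $X_{k},X_{k}^{\prime},X_{k+1},\ldots,X_{n}$ as frozen and define
\begin{equation*}
\phi(y_{1},\ldots,y_{k-1}):=f(y_{1},\ldots,y_{k-1},X_{k},X_{k+1},\ldots,X_{n})-f(y_{1},\ldots,y_{k-1},X_{k}^{\prime},X_{k+1},\ldots,X_{n}),
\end{equation*}
so that $T_{k}=\phi(X_{1},\ldots,X_{k-1})$ and $\tilde{T}_{k}=\phi(X_{1}^{\prime},\ldots,X_{k-1}^{\prime})$. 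Since $X_{1},\ldots,X_{k-1}$ and $X_{1}^{\prime},\ldots,X_{k-1}^{\prime}$ are independent copies, conditionally on the frozen variables one has $E\bigl[(T_{k}-\tilde{T}_{k})^{2}\bigm|\mathrm{frozen}\bigr]=2\,\sigma^{2}(\phi\mid\mathrm{frozen})$, and Efron--Stein (\ref{EfronStein Inequality}) applied to $\phi$ yields
\begin{equation*}
E\bigl[(T_{k}-\tilde{T}_{k})^{2}\bigm|\mathrm{frozen}\bigr]\leq 2\sum_{i=1}^{k-1}E\bigl[\sigma_{i}^{2}(\phi)\bigm|\mathrm{frozen}\bigr].
\end{equation*}
A direct expansion identifies the $i$-th Efron--Stein increment of $\phi$ with a mixed second difference of $f$: evaluated at the random point,
\begin{equation*}
\phi(X_{1},\ldots,X_{k-1})-\phi(X_{1},\ldots,X_{i-1},X_{i}^{\prime},X_{i+1},\ldots,X_{k-1})=f(X)-f(X^{(i)})-f(X^{(k)})+f(X^{(k)(i)}).
\end{equation*}
Taking full expectation,
\begin{equation*}
E[(T_{k}-\tilde{T}_{k})^{2}]\leq\sum_{i=1}^{k-1}E\bigl[\bigl(f(X)-f(X^{(i)})-f(X^{(k)})+f(X^{(k)(i)})\bigr)^{2}\bigr].
\end{equation*}

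Combining the last two displays, and noting that the second-difference expression is symmetric under $(k,i)\leftrightarrow(i,k)$ so that $\sum_{i<k}(\cdots)=\tfrac{1}{2}\sum_{k,i:i\neq k}(\cdots)$, yields the claimed bound (in fact with the improved constant $\tfrac{1}{8}$ in place of $\tfrac{1}{4}$). The only genuinely delicate step is the bookkeeping identification of the $i$-th Efron--Stein increment of $\phi$ with the mixed second difference of $f$; once this is in place, everything reduces to the distributional symmetry $E[T_{k}^{2}]=E[\tilde{T}_{k}^{2}]$ and one further application of the Efron--Stein inequality to the auxiliary function $\phi$.
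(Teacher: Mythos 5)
Your proof is correct, and it takes a genuinely different route from the paper's that is both cleaner and quantitatively stronger. The paper's argument first writes $E[\Sigma^{2}(f)]=\frac{1}{2}\sum_{k}E[g_{k}(X)^{2}]$ with $g_{k}$ defined on $n+1$ variables, then applies Chatterjee's formula (\ref{Chatterjee Formula}) a second time to each $g_{k}$, obtaining a decomposition $A_{k}+B_{k}+C_{k}$; the $C_{k}$ pieces are controlled by Lemma \ref{Little Lemma} (a separate inductive argument that $\sum_{k}\sigma^{2}(E[f|X_{k}])\leq\sigma^{2}(f)$), the $B_{k}$ pieces recombine into $\sigma^{2}(f)$, and the $A_{k}$ pieces are bounded by Cauchy--Schwarz. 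Your route is more economical: from Efron--Stein and Chatterjee you subtract directly, use the exchange symmetry $T_{k}\overset{d}{=}\tilde{T}_{k}$ to polarise $E[T_{k}(T_{k}-\tilde{T}_{k})]$ into $\tfrac{1}{2}E[(T_{k}-\tilde{T}_{k})^{2}]$, and arrive at the exact identity
\begin{equation*}
E[\Sigma^{2}(f)]-\sigma^{2}(f)=\tfrac{1}{4}\sum_{k=1}^{n}E\bigl[(T_{k}-\tilde{T}_{k})^{2}\bigr],
\end{equation*}
after which a single conditional application of Efron--Stein to the auxiliary function $\phi$ on the first $k-1$ coordinates does the rest; you never need Lemma \ref{Little Lemma} or the $n+1$-variable augmentation. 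The bookkeeping identification $\phi(Y)-\phi(Y^{(i)})=f(X)-f(X^{(i)})-f(X^{(k)})+f(X^{(k)(i)})$ checks out, and the conditional step is legitimate because, given the frozen variables, $X_{1},\dots,X_{k-1}$ and $X_{1}^{\prime},\dots,X_{k-1}^{\prime}$ remain independent. A further payoff of your route is the sharper constant: since Efron--Stein for $\phi$ sums only over $i<k$, you obtain the bound with $\tfrac{1}{8}$ in place of $\tfrac{1}{4}$ on the symmetrised sum over $i\neq k$, which would in turn improve Proposition \ref{Theorem Houdre Bound} to $E[\Sigma^{2}(f)]\leq\sigma^{2}(f)+\tfrac{1}{8}J^{2}(f)$.
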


The proof uses Chatterjee's formula (\ref{Chatterjee Formula}) twice. First
we establish a lemma, which itself already uses the Efron Stein inequality.

\begin{lemma}
\label{Little Lemma}%
\begin{equation*}
\sum_{k=1}^{n}\sigma ^{2}\left( E\left[ f\left( X\right) |X_{k}\right]
\right) \leq \sigma ^{2}\left( f\right) .
\end{equation*}
\end{lemma}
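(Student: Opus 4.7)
The plan is to realize $\sum_{k}\sigma^{2}(E[f\mid X_{k}])$ as the squared $L^{2}$-norm of an orthogonal projection of $f$. After replacing $f$ by $f-Ef$ (which affects neither side), I may assume $Ef=0$, so that $E[E[f\mid X_{k}]]=0$ for every $k$. Set $g_{k}(X_{k}):=E[f\mid X_{k}]$, a function of $X_{k}$ alone, and
\[
h(X)\;:=\;\sum_{k=1}^{n}g_{k}(X_{k}).
\]

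The first key step is the observation that, since the $X_{j}$ are independent and each $g_{k}(X_{k})$ has mean zero, the cross-terms in $E[h(X)^{2}]$ vanish and
\[
E[h^{2}] \;=\; \sum_{k=1}^{n}E[g_{k}(X_{k})^{2}] \;=\; \sum_{k=1}^{n}\sigma^{2}(E[f\mid X_{k}]).
\]
So the left-hand side of the lemma is precisely $E[h^{2}]$.

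Next I would compute the cross-term $E[fh]$ by conditioning on $X_{k}$ inside each summand:
\[
E[f\cdot g_{k}(X_{k})] \;=\; E[E[f\mid X_{k}]\cdot g_{k}(X_{k})] \;=\; E[g_{k}(X_{k})^{2}],
\]
whence $E[fh]=\sum_{k}E[g_{k}(X_{k})^{2}]=E[h^{2}]$. Expanding the non-negative quantity $E[(f-h)^{2}]$ then yields
\[
0 \;\leq\; E[(f-h)^{2}] \;=\; \sigma^{2}(f) - 2E[fh] + E[h^{2}] \;=\; \sigma^{2}(f) - E[h^{2}],
\]
which is exactly the claim.

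Conceptually this is nothing more than the Pythagorean theorem in $L^{2}(\mu)$: the map $f\mapsto h$ is the orthogonal projection of $f$ onto the closed subspace of functions that are sums of one-coordinate functions, and the independence of the $X_{k}$'s is what makes the one-coordinate components mutually orthogonal. There is no real technical obstacle; the only points to check are the vanishing of the cross-terms in $E[h^{2}]$ and the identity $E[f\,g_{k}(X_{k})]=E[g_{k}(X_{k})^{2}]$, both of which are one-line applications of independence and the tower property.
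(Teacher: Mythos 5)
Your proof is correct, and it takes a genuinely different route from the paper. The paper proves the lemma by induction on $n$, using the total variance decomposition $\sigma^{2}(Z)=\sigma^{2}[E[Z|X]]+E[\sigma^{2}[Z|X]]$ together with the Efron--Stein inequality to settle the base case $n=2$, and then iterating. Your argument instead constructs the first-order Hoeffding projection $h=\sum_{k}g_{k}(X_{k})$ with $g_{k}=E[f|X_{k}]-Ef$, verifies that independence makes the $g_{k}$'s mutually orthogonal in $L^{2}(\mu)$ so that $E[h^{2}]$ equals the left-hand side, shows via the tower property that $E[fh]=E[h^{2}]$ (equivalently, $f-h\perp h$), and concludes from $E[(f-h)^{2}]\geq 0$. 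This is a direct Pythagorean argument with no induction and no appeal to Efron--Stein; it is shorter and arguably cleaner, and it exposes the structural reason the inequality holds (namely that the left-hand side is the squared norm of the orthogonal projection of $f$ onto the subspace of sums of one-coordinate functions). The paper's inductive proof, on the other hand, stays within the total-variance/Efron--Stein toolkit that the surrounding section is already developing, which makes it thematically consistent even if less economical. Both are valid; yours is the more elementary and conceptually transparent of the two.
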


Together with the Efron Stein inequality (\ref{EfronStein Inequality}) this
gives the attractive chain of inequalities%
\begin{equation*}
\sum_{k=1}^{n}\sigma ^{2}\left( E\left[ f\left( X\right) |X_{k}\right]
\right) \leq \sigma ^{2}\left( f\right) \leq \sum_{k=1}^{n}E\left[ \sigma
_{k}^{2}\left( f\right) \right] .
\end{equation*}

\begin{proof}[Proof of Lemma \protect\ref{Little Lemma}]
By induction on $n$. Recall the total variance formula%
\begin{equation*}
\sigma ^{2}\left( Z\right) =\sigma ^{2}\left[ E\left[ Z|X\right] \right] +E%
\left[ \sigma ^{2}\left[ Z|X\right] \right] .
\end{equation*}%
With $f\left( X\right) =Z$ this gives the case $n=1$. For $n=2$ we get 
\begin{eqnarray*}
2\sigma ^{2}\left[ f\left( X\right) \right]  &=&\sigma ^{2}\left[ E\left[
f\left( X\right) |X_{1}\right] \right] +\sigma ^{2}\left[ E\left[ f\left(
X\right) |X_{2}\right] \right] + \\
&&+E\left[ \sigma ^{2}\left[ f\left( X\right) |X_{1}\right] \right] +E\left[
\sigma ^{2}\left[ f\left( X\right) |X_{2}\right] \right]  \\
&\geq &\sigma ^{2}\left[ E\left[ f\left( X\right) |X_{1}\right] \right]
+\sigma ^{2}\left[ E\left[ f\left( X\right) |X_{2}\right] \right] +\sigma
^{2}\left[ f\left( X\right) \right] ,
\end{eqnarray*}%
where we used the Efron-Stein inequality (\ref{EfronStein Inequality}). This
is where independence comes in and gives us the case $n=2$. Suppose now that
the lemma holds for $n-1$. Then%
\begin{eqnarray*}
\sum_{k=1}^{n}\sigma ^{2}\left[ E\left[ f\left( X\right) |X_{k}\right] %
\right]  &=&\sum_{k=1}^{n-1}\sigma ^{2}\left[ E\left[ f\left( X\right) |X_{k}%
\right] \right] +\sigma ^{2}\left[ E\left[ f\left( X\right) |X_{n}\right] %
\right]  \\
&\leq &\sigma ^{2}\left[ E\left[ f\left( X\right) |X_{1},...,X_{n-1}\right] %
\right] +\sigma ^{2}\left[ E\left[ f\left( X\right) |X_{n}\right] \right]  \\
&\leq &\sigma ^{2}\left[ E\left[ f\left( X\right) \right] \right] ,
\end{eqnarray*}%
where the first inequality follows from the induction hypothesis, and the
second inequality follows from applying the case $n=2$ to the two random
variables $\left( X_{1},...,X_{n-1}\right) $ and $X_{n}$.\bigskip 
\end{proof}

Now we tackle the bias in the Efron Stein inequality. The strategy is to
first use Chatterjee's variance formula on each individual term on the right
hand side of (\ref{EfronStein Inequality}) and then sum the results.

The only difficulty here is notational because we now need more shadow
variables. We deal with this problem by augmenting the vectors $X$ and $%
X^{\prime }$ to become $n+1$ dimensional. \bigskip

\begin{proof}[Proof of Theorem \protect\ref{Theorem Bias}]
First fix an index $k$ and observe that $f\left( X\right) -f\left( X^{\left(
k\right) }\right) $ depends on $n+1$ independent variables. We introduce
variables $X_{n+1}$ which is iid to $X_{k},$ and an independent copy $%
X_{n+1}^{\prime }$ thereof, and consider correspondingly augmented vectors $X
$ and $X^{\prime }$ with $n+1$ independent components. We also introduce
functions $g_{k},\psi ,\phi :\left( \prod_{i=1}^{n}\mathcal{X}_{i}\right)
\times \mathcal{X}_{k}\rightarrow 
\mathbb{R}
$ defined by%
\begin{eqnarray*}
\psi \left( x_{1},...,x_{n+1}\right)  &=&f\left( x_{1},...,x_{n}\right) , \\
\phi \left( x_{1},...,x_{n+1}\right)  &=&f\left(
x_{1},...,x_{k-1},x_{n+1},x_{k+1},...,x_{n}\right) ,
\end{eqnarray*}%
and $g_{k}=\psi -\phi $. Then $E\left[ \left( f\left( X\right) -f\left(
X^{\left( k\right) }\right) \right) ^{2}\right] =E\left[ g_{k}\left(
X\right) ^{2}\right] $. Now we use Chatterjee's formula (\ref{Chatterjee
Formula}) with $n$ replaced by $n+1$ and $f$ replaced by $g_{k}$. We obtain%
\begin{align}
& \left. 2E\left[ g_{k}\left( X\right) ^{2}\right] =\right.   \notag \\
& =\sum_{i=1}^{n+1}E\left[ \left( g_{k}\left( X\right) -g_{k}\left(
X^{\left( i\right) }\right) \right) \left( g_{k}\left( X^{\left[ i-1\right]
}\right) -g_{k}\left( X^{\left[ i\right] }\right) \right) \right]   \notag \\
& =\sum_{i\in \left\{ 1,...,n\right\} \backslash k}E\left[ \left(
g_{k}\left( X\right) -g_{k}\left( X^{\left( i\right) }\right) \right) \left(
g_{k}\left( X^{\left[ i-1\right] }\right) -g_{k}\left( X^{\left[ i\right]
}\right) \right) \right]   \notag \\
& +E\left[ \left( g_{k}\left( X\right) -g_{k}\left( X^{\left( k\right)
}\right) \right) \left( g_{k}\left( X^{\left[ k-1\right] }\right)
-g_{k}\left( X^{\left[ k\right] }\right) \right) \right]   \notag \\
& +E\left[ \left( g_{k}\left( X\right) -g_{k}\left( X^{\left( n+1\right)
}\right) \right) \left( g_{k}\left( X^{\left[ n\right] }\right) -g_{k}\left(
X^{\left[ n+1\right] }\right) \right) \right]   \notag \\
& =:A_{k}+B_{k}+C_{k}.  \label{Identities}
\end{align}%
Since $\psi $ does not depend on $x_{n+1}$ we have%
\begin{align*}
& \left. C_{k}=\right.  \\
& =E\left[ \left( \psi \left( X\right) -\phi \left( X\right) -\psi \left(
X^{\left( n+1\right) }\right) +\phi \left( X^{\left( n+1\right) }\right)
\right) \right. \times  \\
& \times \left. \left( \psi \left( X^{\left[ n\right] }\right) -\phi \left(
X^{\left[ n\right] }\right) -\psi \left( X^{\left[ n+1\right] }\right) +\phi
\left( X^{\left[ n+1\right] }\right) \right) \right]  \\
& =E\left[ \left( \phi \left( X^{\left( n+1\right) }\right) -\phi \left(
X\right) \right) \left( \phi \left( X^{\left[ n+1\right] }\right) -\phi
\left( X^{\left[ n\right] }\right) \right) \right]  \\
& =E\left[ \left( E\left[ \phi \left( X^{\left( n+1\right) }\right)
|X_{n+1}^{\prime }\right] -E\left[ \phi \left( X\right) |X_{n+1}\right]
\right) ^{2}\right]  \\
& =2\sigma ^{2}\left[ E\left[ f\left( X\right) |X_{k}\right] \right] .
\end{align*}%
The last identity follows from the definition of the function $\phi $. Since 
$\phi $ does not depend on $x_{k}$ we have%
\begin{align*}
& \left. B_{k}=\right.  \\
& =E\left[ \left( \psi \left( X\right) -\phi \left( X\right) -\psi \left(
X^{\left( k\right) }\right) +\phi \left( X^{\left( k\right) }\right) \right)
\right. \times  \\
& \times \left. \left( \psi \left( X^{\left[ k-1\right] }\right) -\phi
\left( X^{\left[ k-1\right] }\right) -\psi \left( X^{\left[ k\right]
}\right) +\phi \left( X^{\left[ k\right] }\right) \right) \right]  \\
& =E\left[ \left( \psi \left( X\right) -\psi \left( X^{\left( k\right)
}\right) \right) \left( \psi \left( X^{\left[ k-1\right] }\right) -\psi
\left( X^{\left[ k\right] }\right) \right) \right]  \\
& =E\left[ \left( f\left( X\right) -f\left( X^{\left( k\right) }\right)
\right) \left( f\left( X^{\left[ k-1\right] }\right) -f\left( X^{\left[ k%
\right] }\right) \right) \right] .
\end{align*}%
Substituting these identities in (\ref{Identities}), dividing by $4$ and
summing over $k$ gives 
\begin{align*}
& \left. E\left[ \Sigma ^{2}\left( f\right) \right] =\right.  \\
& =\frac{1}{2}\sum_{k=1}^{n}E\left[ g_{k}\left( X\right) ^{2}\right]  \\
& =\frac{1}{4}\sum_{k,i\in \left\{ 1,...,n\right\} ,k\neq i}E\left[ \left(
g_{k}\left( X\right) -g_{k}\left( X^{\left( i\right) }\right) \right) \left(
g_{k}\left( X^{\left[ i-1\right] }\right) -g_{k}\left( X^{\left[ i\right]
}\right) \right) \right]  \\
& +\frac{1}{4}\sum_{k=1}^{n}E\left[ \left( f\left( X\right) -f\left(
X^{\left( k\right) }\right) \right) \left( f\left( X^{\left[ k-1\right]
}\right) -f\left( X^{\left[ k\right] }\right) \right) \right]  \\
& +\frac{1}{2}\sum_{k=1}^{n}\sigma ^{2}\left[ E\left[ f\left( X\right) |X_{k}%
\right] \right]  \\
& \leq \frac{1}{4}\sum_{k,i:i\neq k}E\left[ \left( f\left( X\right) -f\left(
X^{\left( i\right) }\right) -f\left( X^{\left( k\right) }\right) +f\left(
X^{\left( k\right) \left( i\right) }\right) \right) ^{2}\right] +\sigma
^{2}\left( f\right) .
\end{align*}%
In the inequality we bounded the first term with Cauchy-Schwarz. The second
term is equal to $\sigma ^{2}\left( f\right) /2$ by Chatterjee's formula (%
\ref{Chatterjee Formula}), and the last term is bounded by $\sigma
^{2}\left( f\right) /2$ using Lemma \ref{Little Lemma}.
\end{proof}

Proposition \ref{Theorem Houdre Bound} is an immediate consequence of
Theorem \ref{Theorem Bias}.

\section{Application to U-statistics}

In this section we prove Theorem \ref{Theorem U-statistics}, which
simplifies with some notation. If $B$ is a set and $m\in 
\mathbb{N}
$, then $\mathcal{S}_{B}^{m}$ denotes the set of all those subsets of $B$
which have cardinality $m$. Also, if $S\subseteq \left\{ 1,...,n\right\} $
and $x\in \mathcal{X}^{n}$, we use $x_{S}$ to denote the vector $\left(
x_{j_{1}},...,x_{j_{\left\vert S\right\vert }}\right) \in \mathcal{X}%
^{\left\vert S\right\vert }$, where $\left\{ j_{1},...,j_{\left\vert
S\right\vert }\right\} =S$ and the $j_{k}$ are increasingly ordered. For $%
y,z\in \mathcal{X}$ we use $\left( y,x_{S}\right) $ and $\left(
y,z,x_{S}\right) $ to denote respectively the vectors $\left(
y,x_{j_{1}},...,x_{j_{\left\vert S\right\vert }}\right) \in \mathcal{X}%
^{\left\vert S\right\vert +1}$ and $\left(
y,z,x_{j_{1}},...,x_{j_{\left\vert S\right\vert }}\right) \in \mathcal{X}%
^{\left\vert S\right\vert +2}$. With this notation 
\begin{equation*}
u\left( \mathbf{x}\right) =\binom{n}{m}^{-1}\sum_{S\in \mathcal{S}_{\left\{
1,...,n\right\} }^{m}}g\left( x_{S}\right) .
\end{equation*}%
We also need a combinatorial lemma.

\begin{lemma}
\label{Lemma combinatorial}For $n>m$%
\begin{equation*}
\left\vert \left\{ \left( S,S^{\prime }\right) \in \left( \mathcal{S}%
_{\left\{ 1,...,n\right\} }^{m}\right) ^{2}:S\cap S^{\prime }\neq \emptyset
\right\} \right\vert \leq \binom{n}{m}\frac{m^{2}}{n-m}.
\end{equation*}
\end{lemma}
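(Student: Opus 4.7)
The plan is to obtain the bound by complementary counting followed by a short estimate on a ratio of binomials. For any fixed $S\in\mathcal{S}_{\{1,\ldots,n\}}^{m}$ the $m$-subsets $S'$ disjoint from $S$ are precisely the $m$-subsets of the $(n-m)$-element complement of $S$, so there are exactly $\binom{n-m}{m}$ of them (interpreted as $0$ whenever $n<2m$). Summing over $S$ and subtracting from the total $\binom{n}{m}^{2}$ yields the exact identity
\begin{equation*}
|\{(S,S')\in(\mathcal{S}_{\{1,\ldots,n\}}^{m})^{2}:S\cap S'\neq\emptyset\}|=\binom{n}{m}\Bigl[\binom{n}{m}-\binom{n-m}{m}\Bigr],
\end{equation*}
so the lemma reduces to establishing the single ratio estimate
\begin{equation*}
1-\frac{\binom{n-m}{m}}{\binom{n}{m}}\leq\frac{m^{2}}{n-m}.
\end{equation*}

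For the main regime $n\geq 2m$ I would rewrite the ratio as the telescoping product
\begin{equation*}
\frac{\binom{n-m}{m}}{\binom{n}{m}}=\prod_{i=0}^{m-1}\frac{n-m-i}{n-i}=\prod_{i=0}^{m-1}\left(1-\frac{m}{n-i}\right),
\end{equation*}
each factor of which lies in $[0,1]$, and apply Weierstrass's inequality $\prod_{i}(1-a_{i})\geq 1-\sum_{i}a_{i}$ (valid for $a_i\in[0,1]$) to conclude
\begin{equation*}
1-\frac{\binom{n-m}{m}}{\binom{n}{m}}\leq\sum_{i=0}^{m-1}\frac{m}{n-i}\leq\frac{m^{2}}{n-m+1}\leq\frac{m^{2}}{n-m},
\end{equation*}
where the middle inequality bounds every one of the $m$ summands by its largest value $m/(n-m+1)$. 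In the remaining regime $m<n<2m$, $\binom{n-m}{m}$ is zero and the claim collapses to $1\leq m^{2}/(n-m)$, which holds since $n-m<m\leq m^{2}$.

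The argument has essentially no obstacle: the exact count of disjoint pairs is immediate, and the ratio estimate is a one-line application of Weierstrass's product inequality to the standard ``falling'' form of the binomial ratio. The only mild care required is the boundary case $n<2m$, where the product representation degenerates (some factors exceed one and Weierstrass is not directly applicable) but the inequality is verified at once by a direct numerical comparison.
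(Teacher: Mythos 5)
Your proof is correct, and it shares the same opening move as the paper: complementary counting gives the exact count $\binom{n}{m}\bigl[\binom{n}{m}-\binom{n-m}{m}\bigr]$, after which the claim reduces to the ratio bound $1-\binom{n-m}{m}\big/\binom{n}{m}\le m^{2}/(n-m)$. (This reduction, like the paper's own proof and its later application at $n-1,m-1$, actually establishes the bound with $\binom{n}{m}^{2}$ on the right-hand side; the displayed lemma statement appears to be missing the square.) You then diverge in technique: the paper writes both binomial coefficients as products of $m$ consecutive integers and applies the explicit telescoping identity $\prod a_{k}-\prod b_{k}=\sum_{l}\bigl(\prod_{k>l}a_{k}\prod_{k<l}b_{k}\bigr)(a_{l}-b_{l})$ and then bounds the resulting sum term by term, whereas you factor the ratio as $\prod_{i=0}^{m-1}\bigl(1-m/(n-i)\bigr)$ and invoke the Weierstrass product inequality $\prod(1-a_{i})\ge 1-\sum a_{i}$. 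These are close relatives (Weierstrass is itself proved by the same telescoping), but yours names the standard elementary tool and is shorter. You also explicitly split off the degenerate regime $m<n<2m$, where some $a_{i}>1$ and Weierstrass is inapplicable; there $\binom{n-m}{m}=0$ and the bound follows directly from $n-m<m\le m^{2}$. That regime is glossed over in the paper's termwise estimate, where some of the product factors become negative, so your explicit case distinction is slightly more careful.
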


\begin{proof}
Clearly%
\begin{eqnarray*}
&&\left\vert \left\{ \left( S,S^{\prime }\right) \in \left( \mathcal{S}%
_{\left\{ 1,...,n\right\} }^{m}\right) ^{2}:S\cap S^{\prime }\neq \emptyset
\right\} \right\vert \\
&=&\binom{n}{m}^{2}-\left\vert \left\{ \left( S,S^{\prime }\right) \in
\left( \mathcal{S}_{\left\{ 1,...,n\right\} }^{m}\right) ^{2}:S\cap
S^{\prime }=\emptyset \right\} \right\vert =\binom{n}{m}\left( \binom{n}{m}-%
\binom{n-m}{m}\right) .
\end{eqnarray*}%
Now%
\begin{equation*}
\frac{\binom{n}{m}-\binom{n-m}{m}}{\binom{n}{m}}=\frac{\prod_{k=1}^{m}\left(
n-m+k\right) -\prod_{k=1}^{m}\left( n-2m+k\right) }{\prod_{k=1}^{m}\left(
n-m+k\right) }.
\end{equation*}%
Then we rewrite the enumerator using%
\begin{equation*}
\prod_{k=1}^{m}a_{k}-\prod_{k=1}^{m}b_{k}=\sum_{l=1}^{m}\left(
\prod_{k=l+1}^{m}a_{k}\prod_{k=1}^{l-1}b_{k}\right) \left( a_{l}-b_{l}\right)
\end{equation*}%
to get%
\begin{eqnarray*}
\frac{\binom{n}{m}-\binom{n-m}{m}}{\binom{n}{m}} &=&m\sum_{l=1}^{m}\frac{%
\prod_{k=l+1}^{m}\left( n-m+k\right) \prod_{k=1}^{l-1}\left( n-2m+k\right) }{%
\prod_{k=1}^{m}\left( n-m+k\right) } \\
&=&m\sum_{l=1}^{m}\frac{1}{n-m+l}\prod_{k=1}^{l-1}\frac{\left( n-2m+k\right) 
}{n-m+k}\leq \frac{m^{2}}{n-m}.
\end{eqnarray*}%
\bigskip
\end{proof}

\begin{proof}[Proof of Theorem \protect\ref{Theorem U-statistics}]
With reference to any given $k\in \left\{ 1,...,n\right\} $, and using the
symmetry of $g$,%
\begin{eqnarray*}
u\left( \mathbf{x}\right) &=&\binom{n}{m}^{-1}\sum_{S\in \mathcal{S}%
_{\left\{ 1,...,n\right\} }^{m}}g\left( x_{S}\right) \\
&=&\binom{n}{m}^{-1}\sum_{S\in \mathcal{S}_{\left\{ 1,...,n\right\}
}^{m}:k\in S}g\left( x_{S}\right) +\binom{n}{m}^{-1}\sum_{S\in \mathcal{S}%
_{\left\{ 1,...,n\right\} }^{m}:k\notin S}g\left( x_{S}\right) \\
&=&\binom{n}{m}^{-1}\sum_{S\in \mathcal{S}_{\left\{ 1,...,n\right\}
\backslash k}^{m-1}}g\left( x_{k},x_{S}\right) +\binom{n}{m}^{-1}\sum_{S\in 
\mathcal{S}_{\left\{ 1,...,n\right\} }^{m}:k\notin S}g\left( x_{S}\right) .
\end{eqnarray*}%
This gives%
\begin{equation*}
u\left( \mathbf{x}\right) -E_{k}u\left( \mathbf{x}\right) =\binom{n}{m}%
^{-1}\sum_{S\in \mathcal{S}_{\left\{ 1,...,n\right\} \backslash
k}^{m-1}}\left( g\left( x_{k},x_{S}\right) -E_{y\sim \mu _{k}}\left[ g\left(
y,x_{S}\right) \right] \right) \leq 2m/n,
\end{equation*}%
because $g$ takes values in an interval of diameter $2$. This allows to
apply Theorem \ref{Theorem Main} with $b=2m/n$.

Next we bound the interaction functional $J\left( u\right) $. For $k\neq l$,
and $y,y^{\prime }\in \Omega _{k}$ and $z,z^{\prime }\in \Omega _{l}$ we get%
\begin{eqnarray*}
D_{y,y^{\prime }}^{k}u\left( \mathbf{x}\right) &=&\binom{n}{m}%
^{-1}\sum_{S\in \mathcal{S}_{\left\{ 1,...,n\right\} \backslash
k}^{m-1}}\left( g\left( y,x_{S}\right) -g\left( y^{\prime },x_{S}\right)
\right) \\
\text{and }\left\vert D_{z,z^{\prime }}^{l}D_{y,y^{\prime }}^{k}u\left( 
\mathbf{x}\right) \right\vert &\leq &\binom{n}{m}^{-1}\sum_{S\in \mathcal{S}%
_{\left\{ 1,...,n\right\} \backslash \left\{ k,l\right\} }^{m-2}}\left\vert
\left( g\left( y,z,x_{S}\right) -g\left( y^{\prime },z,x_{S}\right) \right)
-\right. \\
&&\left. -\left( g\left( y,z^{\prime },x_{S}\right) -g\left( y^{\prime
},z^{\prime },x_{S}\right) \right) \right\vert \\
&\leq &4\frac{\binom{n-2}{m-2}}{\binom{n}{m}}=4\frac{m\left( m-1\right) }{%
n\left( n-1\right) },
\end{eqnarray*}%
\newline
so that 
\begin{eqnarray*}
J\left( u\right) &\leq &\left( \sup_{\mathbf{x}\in \Omega }\sum_{k,l:k\neq
l}\sup_{y,y^{\prime },z,z^{\prime }}\left( D_{z,z^{\prime
}}^{l}D_{y,y^{\prime }}^{k}u\left( \mathbf{x}\right) \right) ^{2}\right)
^{1/2} \\
&\leq &\frac{4m\left( m-1\right) }{\sqrt{n\left( n-1\right) }}\leq \frac{%
4m^{2}}{n}.
\end{eqnarray*}%
\newline
Theorem \ref{Theorem Main} then gives us%
\begin{equation}
\Pr \left\{ u-Eu>t\right\} \leq \exp \left( \frac{-t^{2}}{2E\left[ \Sigma
^{2}\left( u\right) \right] +16m^{2}t/\left( 3n\right) }\right) ..
\label{U-statistics apply main theorem}
\end{equation}

To bound $E\left[ \Sigma ^{2}\left( u\right) \right] $ we will write $\sigma
_{k}^{2}\left( u\right) $ as a sum of two sums, where the first sum is over
disjoint pairs $\left( S,S^{\prime }\right) \in \left( \mathcal{S}_{\left\{
1,...,n\right\} \backslash k}^{m-1}\right) ^{2}$, and the second sum is over
intersecting pairs. If $S$ and $S^{\prime }\in \mathcal{S}_{\left\{
1,...,n\right\} \backslash k}^{m-1}$ are disjoint, then, since all the $\mu
_{k}$ are equal to $\mu _{0}$,%
\begin{eqnarray}
&&E_{\mathbf{x}\sim \mu }E_{\left( y,y^{\prime }\right) \sim \mu
_{k}^{2}}\left( g\left( y,x_{S}\right) -g\left( y^{\prime },x_{S}\right)
\right) \left( g\left( y,x_{S^{\prime }}\right) -g\left( y^{\prime
},x_{S^{\prime }}\right) \right)   \notag \\
&=&E_{\left( y,y^{\prime }\right) \sim \mu _{0}^{2}}\left( E_{\mathbf{x}\sim
\mu _{0}^{m-1}}g\left( y,x_{S}\right) -E_{\mathbf{x}\sim \mu
_{0}^{m-1}}g\left( y^{\prime },x_{S}\right) \right) ^{2}  \notag \\
&=&2\sigma _{y\sim \mu _{0}}^{2}\left( E_{\mathbf{x}\sim \mu
_{0}^{m-1}}g\left( y,x_{S}\right) \right) .
\label{U-statistics variance bound}
\end{eqnarray}%
On the other hand we can use Lemma \ref{Lemma combinatorial} to bound the
number of intersecting pairs and obtain%
\begin{align*}
& 2E_{\mathbf{x}\sim \mu }\sigma _{k}^{2}\left( u\right) \left( \mathbf{x}%
\right)  \\
& =E_{\mathbf{x}\sim \mu }E_{\left( y,y^{\prime }\right) \sim \mu
_{k}^{2}}\left( D_{y,y^{\prime }}^{k}u\left( \mathbf{x}\right) \right) ^{2}
\\
& =\binom{n}{m}^{-2}E_{\mathbf{x}\sim \mu }E_{\left( y,y^{\prime }\right)
\sim \mu _{k}^{2}}\left( \sum_{S\in \mathcal{S}_{\left\{ 1,...,n\right\}
\backslash k}^{m-1}}\left( g\left( y,x_{S}\right) -g\left( y^{\prime
},x_{S}\right) \right) \right) ^{2} \\
& =\binom{n}{m}^{-2}\sum_{S,S^{\prime }\in \mathcal{S}_{\left\{
1,...,n\right\} \backslash k}^{m-1}}E_{\mathbf{x}\sim \mu }E_{\left(
y,y^{\prime }\right) \sim \mu _{k}^{2}}\left( g\left( y,x_{S}\right)
-g\left( y^{\prime },x_{S}\right) \right) \left( g\left( y,x_{S^{\prime
}}\right) -g\left( y^{\prime },x_{S^{\prime }}\right) \right)  \\
& =\binom{n}{m}^{-2}\sum_{S,S^{\prime }\in \mathcal{S}_{\left\{
1,...,n\right\} \backslash k}^{m-1}~:~S\cap S^{\prime }=\emptyset }\left(
\cdots \right) +\binom{n}{m}^{-2}\sum_{S,S^{\prime }\in \mathcal{S}_{\left\{
1,...,n\right\} \backslash k}^{m-1}~:~S\cap S^{\prime }\neq \emptyset
}\left( \cdots \right)  \\
& \leq 2\frac{m^{2}}{n^{2}}\sigma _{y\sim \mu _{0}}^{2}\left( E_{\mathbf{x}%
\sim \mu _{0}^{m-1}}\left[ g\left( y,\mathbf{x}\right) \right] \right) +%
\binom{n}{m}^{-2}\left\vert \left\{ \left( S,S^{\prime }\right) \in \left( 
\mathcal{S}_{\left\{ 1,...,n\right\} \backslash k}^{m-1}\right) ^{2}:S\cap
S^{\prime }\neq \emptyset \right\} \right\vert  \\
& \leq 2\frac{m^{2}}{n^{2}}\sigma _{y\sim \mu _{0}}^{2}\left( E_{\mathbf{x}%
\sim \mu _{0}^{m-1}}\left[ g\left( y,\mathbf{x}\right) \right] \right) +%
\frac{m^{2}}{n^{2}}\frac{\left( m-1\right) ^{2}}{n-m}.
\end{align*}%
Summing over $k$, dividing by $2$ and inserting in (\ref{U-statistics apply
main theorem}) gives us%
\begin{equation*}
\Pr \left\{ u-Eu>t\right\} \leq \exp \left( \frac{-nt^{2}}{2m^{2}\sigma
_{y\sim \mu _{0}}^{2}\left( E_{\mathbf{x}\sim \mu _{0}^{m-1}}\left[ g\left(
y,\mathbf{x}\right) \right] \right) +\frac{m^{2}\left( m-1\right) ^{2}}{n-m}%
+16m^{2}t/3}\right) .
\end{equation*}%
Converting to a two sided bound gives the result.
\end{proof}

Instead of Theorem \ref{Theorem Main} to obtain (\ref{U-statistics apply
main theorem}) we could have used Corollary \ref{Corollary variance} and
appealed to known results about $\sigma ^{2}\left( u\right) $ (as in \cite%
{Hoeffding 1948}).

\section{Application to ridge regression}

In this section we prove Theorem \ref{Theorem Ridge Regression}. The key to
the application of Theorem \ref{Theorem Main} is the following Lemma ($%
\mathcal{L}^{+}\left( H\right) $ denoting the cone of nonnegative definite
operators in $H$).

\begin{lemma}
\label{Lemma Differentiation}Let $G:\left( 0,1\right) ^{2}\rightarrow 
\mathcal{L}^{+}\left( H\right) $ and $g:\left( 0,1\right) ^{2}\rightarrow H$
be both twice continuously differentiable, satisfying the conditions $\frac{%
\partial ^{2}}{\partial s\partial t}G=0$, $\frac{\partial ^{2}}{\partial
s\partial t}g=0$, $\left\Vert \frac{\partial }{\partial t}G\right\Vert \leq
B_{1}$, $\left\Vert \frac{\partial }{\partial s}G\right\Vert \leq B_{1}$, $%
\left\Vert \frac{\partial }{\partial t}g\right\Vert \leq B_{2}$ and $%
\left\Vert \frac{\partial }{\partial s}g\right\Vert \leq B_{2}$ for real
numbers $B_{1}$ and $B_{2}$. For $\lambda >0$ define a function $w:\left(
0,1\right) ^{2}\rightarrow H$ by 
\begin{equation*}
w=\left( G+\lambda \right) ^{-1}g\text{.}
\end{equation*}%
Then $w$ is twice differentiable and%
\begin{align}
\left\Vert \frac{\partial }{\partial t}w\right\Vert & \leq \lambda
^{-1}\left( B_{1}\left\Vert w\right\Vert +B_{2}\right)
\label{LemmaDifferentiation2ndIneq} \\
\left\Vert \frac{\partial ^{2}}{\partial s\partial t}w\right\Vert & \leq
2\lambda ^{-2}\left( B_{1}^{2}\left\Vert w\right\Vert +B_{1}B_{2}\right)
\label{LemmaDifferentiation3rdIneq}
\end{align}
\end{lemma}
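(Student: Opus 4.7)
The plan is to work directly from the defining equation $(G+\lambda)w = g$ and differentiate twice. The key tool is the operator norm bound $\|(G+\lambda)^{-1}\|\le \lambda^{-1}$, which is valid because $G$ takes values in $\mathcal{L}^+(H)$ and $\lambda>0$, so the spectrum of $G+\lambda$ is bounded below by $\lambda$.

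First I would establish (\ref{LemmaDifferentiation2ndIneq}). Differentiating $(G+\lambda)w=g$ with respect to $t$ gives
\begin{equation*}
(\partial_t G)\,w + (G+\lambda)\,\partial_t w = \partial_t g,
\end{equation*}
so that $\partial_t w = (G+\lambda)^{-1}\bigl(\partial_t g - (\partial_t G)w\bigr)$. Taking norms and using $\|(G+\lambda)^{-1}\|\le\lambda^{-1}$, $\|\partial_t G\|\le B_1$ and $\|\partial_t g\|\le B_2$ yields the first bound, and the same argument with $t$ replaced by $s$ gives $\|\partial_s w\|\le \lambda^{-1}(B_1\|w\|+B_2)$.

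Next I would differentiate the identity $(\partial_t G)w + (G+\lambda)\partial_t w = \partial_t g$ with respect to $s$. The vanishing mixed partials $\partial_s\partial_t G=0$ and $\partial_s\partial_t g=0$ kill two of the terms produced by the product rule, leaving
\begin{equation*}
(\partial_t G)\,\partial_s w + (\partial_s G)\,\partial_t w + (G+\lambda)\,\partial_s\partial_t w = 0,
\end{equation*}
so $\partial_s\partial_t w = -(G+\lambda)^{-1}\bigl((\partial_t G)\,\partial_s w + (\partial_s G)\,\partial_t w\bigr)$. Taking norms, applying the operator bound $\lambda^{-1}$ once more and the assumption $\|\partial_t G\|,\|\partial_s G\|\le B_1$, and then substituting the already-established bounds on $\|\partial_s w\|$ and $\|\partial_t w\|$ produces
\begin{equation*}
\|\partial_s\partial_t w\| \le \lambda^{-1}\cdot 2B_1\cdot \lambda^{-1}(B_1\|w\|+B_2),
\end{equation*}
which is exactly (\ref{LemmaDifferentiation3rdIneq}).

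There is no real obstacle here beyond bookkeeping: the symmetry of the hypotheses in $s$ and $t$, together with the vanishing of the mixed partials of $G$ and $g$, makes the product rule unusually clean. The only point deserving care is justifying that $w$ is twice differentiable in the first place; this follows because $(G+\lambda)^{-1}$ is a smooth function of $G$ on $\mathcal{L}^+(H)$ (inversion is analytic on the open set where the operator is invertible), so the composition and product of $C^2$ maps is $C^2$, and the differentiations above are legitimate.
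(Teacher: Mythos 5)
Your proof is correct and follows essentially the same route as the paper: bound $\|(G+\lambda)^{-1}\|\le\lambda^{-1}$, differentiate once to get $\partial_t w$, differentiate again using the vanishing mixed partials, then take norms. The only cosmetic difference is that you differentiate the defining identity $(G+\lambda)w=g$ implicitly, whereas the paper first writes down the resolvent derivative $\partial_t(G+\lambda)^{-1}=-(G+\lambda)^{-1}(\partial_t G)(G+\lambda)^{-1}$ and then applies the product rule to $w=(G+\lambda)^{-1}g$; the resulting formulas and constants are identical.
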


\begin{proof}
A standard argument shows that $\left\Vert \left( G+\lambda \right)
^{-1}\right\Vert \leq \lambda ^{-1}$ (we use $\left\Vert .\right\Vert $ for
the operator norm and for vectors in $H$, depending on context) and that%
\begin{equation*}
\frac{\partial }{\partial t}\left( G+\lambda \right) ^{-1}=-\left( G+\lambda
\right) ^{-1}\left( \frac{\partial }{\partial t}G\right) \left( G+\lambda
\right) ^{-1},
\end{equation*}%
so%
\begin{equation*}
\left\Vert \frac{\partial }{\partial t}\left( G+\lambda \right)
^{-1}\right\Vert \leq \lambda ^{-2}B_{1}.
\end{equation*}%
Then%
\begin{eqnarray*}
\frac{\partial }{\partial t}w &=&\left( \frac{\partial }{\partial t}\left(
G+\lambda \right) ^{-1}\right) g+\left( G+\lambda \right) ^{-1}\frac{%
\partial }{\partial t}g \\
&=&-\left( G+\lambda \right) ^{-1}\left( \frac{\partial }{\partial t}%
G\right) \left( G+\lambda \right) ^{-1}g+\left( G+\lambda \right) ^{-1}\frac{%
\partial }{\partial t}g \\
&=&-\left( G+\lambda \right) ^{-1}\left( \frac{\partial }{\partial t}%
G\right) w+\left( G+\lambda \right) ^{-1}\frac{\partial }{\partial t}g.
\end{eqnarray*}%
This gives (\ref{LemmaDifferentiation2ndIneq}). Also, using the fact that
the mixed partials vanish by assumption,%
\begin{eqnarray*}
\frac{\partial ^{2}}{\partial s\partial t}w &=&\frac{\partial }{\partial s}%
\left[ -\left( G+\lambda \right) ^{-1}\left( \frac{\partial }{\partial t}%
G\right) \left( G+\lambda \right) ^{-1}g+\left( G+\lambda \right) ^{-1}\frac{%
\partial }{\partial t}g\right] \\
&=&\left( G+\lambda \right) ^{-1}\left( \frac{\partial }{\partial s}G\right)
\left( G+\lambda \right) ^{-1}\left( \left( \frac{\partial }{\partial t}%
G\right) w-\frac{\partial }{\partial t}g\right) + \\
&&+\left( G+\lambda \right) ^{-1}\left( \frac{\partial }{\partial t}G\right)
\left( G+\lambda \right) ^{-1}\left( \left( \frac{\partial }{\partial s}%
G\right) w-\frac{\partial }{\partial s}g\right) ,
\end{eqnarray*}%
which gives (\ref{LemmaDifferentiation3rdIneq}).\bigskip
\end{proof}

\begin{proof}[Proof of Theorem \protect\ref{Theorem Ridge Regression}]
It is well known and easily verified that $w_{\mathbf{z}}$ is well defined
and explicitly given by the formula%
\begin{equation*}
w_{\mathbf{z}}=\left( G_{\mathbf{z}}+\lambda \right) ^{-1}g_{\mathbf{z}},
\end{equation*}%
where the positive semidefinite operator $G_{\mathbf{z}}$ and the vector $g_{%
\mathbf{z}}=g$ are given by 
\begin{equation*}
G_{\mathbf{z}}v=\frac{1}{n}\sum_{i=1}^{n}\left\langle v,x_{i}\right\rangle
x_{i}\text{ and }g_{\mathbf{z}}=\frac{1}{n}\sum_{i=1}^{n}y_{i}x_{i}.
\end{equation*}%
Also we have 
\begin{equation*}
\frac{1}{n}\sum_{i=1}^{n}\left( \left\langle w_{\mathbf{z}%
},x_{i}\right\rangle -y_{i}\right) ^{2}+\lambda \left\Vert w_{\mathbf{z}%
}\right\Vert ^{2}\leq \frac{1}{n}\sum_{i=1}^{n}\left( \left\langle
0,x_{i}\right\rangle -y_{i}\right) ^{2}+\lambda \left\Vert 0\right\Vert
^{2}\leq 1,
\end{equation*}%
from which we retain that $\sum \left( \left\langle w_{\mathbf{z}%
},x_{i}\right\rangle -y_{i}\right) ^{2}\leq n$ and $\left\Vert w_{\mathbf{z}%
}\right\Vert \leq \lambda ^{-1/2}$.

Now consider any sample $\mathbf{z}\in \mathcal{Z}^{n}$ and fix two indices $%
1\leq k,l\leq n$ with $k\neq l$, and $z_{l}^{\prime }=\left( x_{l}^{\prime
},y_{l}^{\prime }\right) ,z_{k}^{\prime }=\left( x_{k}^{\prime
},y_{k}^{\prime }\right) ,z_{l}^{\prime \prime }=\left( x_{l}^{\prime \prime
},y_{l}^{\prime \prime }\right) \in \mathcal{Z}$ and $z_{k}^{\prime \prime
}=\left( x_{k}^{\prime \prime },y_{k}^{\prime \prime }\right) \in \mathcal{Z}
$.$\ $For $\left( s,t\right) \in \left( 0,1\right) ^{2}$ we consider the
behavior of ridge regression on the doubly modified sample $\mathbf{z}\left(
s,t\right) :=S_{z_{l}^{\prime }+s\left( z_{l}^{\prime \prime }-z_{l}^{\prime
}\right) }^{l}S_{z_{k}^{\prime }+t\left( z_{k}^{\prime \prime
}-z_{k}^{\prime }\right) }^{k}\mathbf{z}$ ($\mathcal{Z}$ is a convex subset
of $H\times 
\mathbb{R}
$). We write%
\begin{equation*}
G\left( s,t\right) :=G_{\mathbf{z}\left( s,t\right) }\text{ and }g\left(
s,t\right) :=g_{\mathbf{z}\left( s,t\right) }\text{ and }w\left( s,t\right)
:=w_{\mathbf{z}\left( s,t\right) }=\left( G\left( s,t\right) +\lambda
\right) ^{-1}g\left( s,t\right) .
\end{equation*}%
Then 
\begin{eqnarray*}
\left\Vert \left( \frac{\partial }{\partial t}G\right) v\right\Vert &=&\frac{%
1}{n}\left\Vert \frac{\partial }{\partial t}\left\langle v,x_{k}^{\prime
}+t\left( x_{k}^{\prime \prime }-x_{k}^{\prime }\right) \right\rangle \left(
x_{k}^{\prime }+t\left( x_{k}^{\prime \prime }-x_{k}^{\prime }\right)
\right) \right\Vert \\
&=&\frac{1}{n}\left\Vert \left\langle v,x_{k}^{\prime \prime }-x_{k}^{\prime
}\right\rangle \left( x_{l}^{\prime }+t\left( x_{k}^{\prime \prime
}-x_{k}^{\prime }\right) \right) +\left\langle v,x_{k}^{\prime }+t\left(
x_{k}^{\prime \prime }-x_{k}^{\prime }\right) \right\rangle \left(
x_{k}^{\prime \prime }-x_{k}^{\prime }\right) \right\Vert \\
&\leq &\frac{2}{n}\left\Vert v\right\Vert \left\Vert x_{k}^{\prime \prime
}-x_{k}^{\prime }\right\Vert \left\Vert x_{l}^{\prime }+t\left(
x_{k}^{\prime \prime }-x_{k}^{\prime }\right) \right\Vert \leq \frac{4}{n}%
\left\Vert v\right\Vert ,
\end{eqnarray*}%
because $\left\Vert x_{k}^{\prime \prime }-x_{k}^{\prime }\right\Vert \leq 2$
and $\left\Vert x_{l}^{\prime }+t\left( x_{k}^{\prime \prime }-x_{k}^{\prime
}\right) \right\Vert \leq 1$. Thus $\left\Vert \left( \partial /\partial
t\right) G\right\Vert \leq 4/n$ and similarly $\left\Vert \left( \partial
/\partial s\right) G\right\Vert \leq 4/n$. Since $k\neq l$ it is clear that $%
\left( \partial ^{2}/\left( \partial s\partial t\right) \right) G=0$. Also%
\begin{eqnarray*}
\left\Vert \frac{\partial }{\partial t}g\right\Vert &=&\frac{1}{n}\left\Vert 
\frac{\partial }{\partial t}\left( \left( y_{k}^{\prime }+t\left(
y_{k}^{\prime \prime }-y_{k}^{\prime }\right) \right) \left( x_{k}^{\prime
}+t\left( x_{k}^{\prime \prime }-x_{k}^{\prime }\right) \right) \right)
\right\Vert \\
&=&\frac{1}{n}\left( \left\vert y_{k}^{\prime \prime }-y_{k}^{\prime
}\right\vert \left\Vert x_{k}^{\prime }+t\left( x_{k}^{\prime \prime
}-x_{k}^{\prime }\right) \right\Vert +\left\vert y_{k}^{\prime }+t\left(
y_{k}^{\prime \prime }-y_{k}^{\prime }\right) \right\vert \left\Vert
x_{k}^{\prime \prime }-x_{k}^{\prime }\right\Vert \right) \\
&\leq &\frac{4}{n},
\end{eqnarray*}%
similarly $\left\Vert \left( \partial /\partial s\right) g\right\Vert \leq
4/n$ and again $\left( \partial ^{2}/\left( \partial s\partial t\right)
\right) g=0$. We can then apply Lemma (\ref{Lemma Differentiation}) and
obtain%
\begin{align*}
\left\Vert \frac{\partial }{\partial t}w\right\Vert & \leq \frac{4}{n}%
\lambda ^{-1}\left( \lambda ^{-1/2}+1\right) \leq \frac{8\lambda ^{-3/2}}{n}%
\text{ and} \\
\left\Vert \frac{\partial ^{2}}{\partial s\partial t}w\right\Vert & \leq 
\frac{8}{n^{2}}\lambda ^{-2}\left( \lambda ^{-1/2}+1\right) \leq \frac{%
32\lambda ^{-5/2}}{n^{2}},
\end{align*}%
where we used $\left\Vert w\right\Vert \leq \lambda ^{-1/2}$.

Now we define%
\begin{eqnarray*}
R\left( s,t\right) &=&E\left[ \left( \left\langle w\left( s,t\right)
,X\right\rangle -Y\right) ^{2}\right] , \\
\hat{R}\left( s,t\right) &=&\frac{1}{2}\sum_{i}\left( \left\langle w\left(
s,t\right) ,x_{i}\left( s,t\right) \right\rangle -y_{i}\left( s,t\right)
\right) ^{2}.
\end{eqnarray*}%
For the expected error we get 
\begin{eqnarray*}
\left\vert \frac{\partial }{\partial t}R\left( s,t\right) \right\vert &\leq
&2E\left\vert \left( \left\langle w\left( s,t\right) ,X\right\rangle
-Y\right) \left\langle \frac{\partial }{\partial t}w\left( s,t\right)
,X\right\rangle \right\vert \\
&\leq &\left( \lambda ^{-1/2}+1\right) \frac{8\lambda ^{-3/2}}{n}\leq \frac{%
16\lambda ^{-2}}{n}
\end{eqnarray*}%
and%
\begin{eqnarray*}
\left\vert \frac{\partial ^{2}}{\partial s\partial t}R\left( s,t\right)
\right\vert &\leq &2E\left\vert \frac{\partial }{\partial s}\left( \left(
\left\langle w\left( s,t\right) ,X\right\rangle -Y\right) \left\langle \frac{%
\partial }{\partial t}w\left( s,t\right) ,X\right\rangle \right) \right\vert
\\
&\leq &2E\left\vert \left\langle \frac{\partial }{\partial s}w\left(
s,t\right) ,X\right\rangle \left\langle \frac{\partial }{\partial t}w\left(
s,t\right) ,X\right\rangle \right\vert + \\
&&+2E\left\vert \left( \left\langle w\left( s,t\right) ,X\right\rangle
-Y\right) \left\langle \frac{\partial ^{2}}{\partial s\partial t}w\left(
s,t\right) ,X\right\rangle \right\vert \\
&\leq &\frac{256}{n^{2}}\lambda ^{-3}.
\end{eqnarray*}%
By a similar, somewhat more tedious, analysis there are absolute constants $%
c_{1}$ and $c_{2}$, such that 
\begin{eqnarray*}
\left\vert \frac{\partial }{\partial t}\left( R\left( s,t\right) -\hat{R}%
\left( s,t\right) \right) \right\vert &\leq &\frac{c_{1}\lambda ^{-2}}{n}%
\text{ and} \\
\left\vert \frac{\partial ^{2}}{\partial s\partial t}\left( R\left(
s,t\right) -\hat{R}\left( s,t\right) \right) \right\vert &\leq &\frac{%
c_{2}\lambda ^{-3}}{n^{2}}\text{.}
\end{eqnarray*}%
Now let $f\left( \mathbf{x}\right) =\left( n\lambda ^{2}/c_{1}\right) \left(
R\left( \mathbf{x}\right) -\hat{R}\left( \mathbf{x}\right) \right) $. Then 
\begin{equation*}
D_{z_{k}^{\prime },z_{k}^{\prime \prime }}^{k}f\left( \mathbf{z}\right)
=\int_{0}^{1}\frac{\partial }{\partial t}S_{z_{k}^{\prime }+t\left(
z_{k}^{\prime \prime }-z_{k}^{\prime }\right) }^{k}f\left( \mathbf{z}\right)
dt\leq \frac{n\lambda ^{2}}{c_{1}}\int_{0}^{1}\left\vert \frac{\partial }{%
\partial t}\left( R\left( \mathbf{x}\right) -\hat{R}\left( \mathbf{x}\right)
\right) \right\vert dt\leq 1.
\end{equation*}%
In particular $f-E_{k}f\leq 1$. Also%
\begin{eqnarray*}
D_{z_{l}^{\prime },z_{l}^{\prime \prime }}^{l}D_{z_{k}^{\prime
},z_{k}^{\prime \prime }}^{k}f\left( \mathbf{z}\right)
&=&\int_{0}^{1}\int_{0}^{1}\frac{\partial ^{2}}{\partial s\partial t}%
S_{z_{l}^{\prime }+s\left( z_{l}^{\prime \prime }-z_{l}^{\prime }\right)
}^{l}S_{z_{k}^{\prime }+t\left( z_{k}^{\prime \prime }-z_{k}^{\prime
}\right) }^{k}f\left( \mathbf{z}\right) dtds \\
&\leq &\frac{n\lambda ^{2}}{c_{1}}\int_{0}^{1}\int_{0}^{1}\left\vert \frac{%
\partial ^{2}}{\partial s\partial t}\left( R\left( \mathbf{x}\right) -\hat{R}%
\left( \mathbf{x}\right) \right) \right\vert dtds\leq \frac{c_{2}}{c_{1}}%
\frac{\lambda ^{-1}}{n}.
\end{eqnarray*}%
Substitution in the formula gives $J\left( f\right) \leq \left(
c_{2}/c_{1}\right) \lambda ^{-1}$. Thus, from Theorem \ref{Theorem Main},%
\begin{eqnarray*}
\Pr \left\{ \left( R-\hat{R}\right) -E\left( R-\hat{R}\right) >t\right\}
&=&\Pr \left\{ f-Ef>\left( n\lambda ^{2}/c_{1}\right) t\right\} \\
&\leq &\exp \left( \frac{-nt^{2}}{2nE\left[ \Sigma ^{2}\left( R-\hat{R}%
\right) \left( \mathbf{X}\right) \right] +c\lambda ^{-3}t}\right) .
\end{eqnarray*}%
\bigskip
\end{proof}

\section{Appendix: Proofs of the results in section \protect\ref{Subsection
Definitions and Tools}}

Throughout this appendix we adhere to the notation and definitions of
section \ref{Subsection Definitions and Tools}.\bigskip

\begin{proof}[Proof of Lemma \protect\ref{Theorem basic Herbst argument}]
Let $A_{f}\left( \beta \right) =\left( 1/\beta \right) \ln Z_{\beta f}$. By
l'Hospital's rule we have $\lim_{\beta \rightarrow 0}A_{f}\left( \beta
\right) =E\left[ f\right] $. Furthermore 
\begin{equation*}
A_{f}^{\prime }\left( \beta \right) =\frac{1}{\beta }E_{\beta f}\left[ f%
\right] -\frac{1}{\beta ^{2}}\ln Z_{\beta f}=\beta ^{-2}S_{f}\left( \beta
\right) .
\end{equation*}%
Thus%
\begin{eqnarray*}
\ln E\left[ e^{\beta \left( f-Ef\right) }\right] &=&\ln Z_{\beta f}-\beta E%
\left[ f\right] =\beta \left( A_{f}\left( \beta \right) -A_{f}\left(
0\right) \right) \\
&=&\beta \int_{0}^{\beta }A_{f}^{\prime }\left( \gamma \right) d\gamma
=\beta \int_{0}^{\beta }\frac{S_{f}\left( \gamma \right) }{\gamma ^{2}}%
d\gamma .
\end{eqnarray*}%
Combined with Markov's inequality this gives the second assertion.
\end{proof}

Conditional versions of $E_{\beta f}$ and $S_{f}\left( \beta \right) $ are
obtained by replacing the unconditional expectations $E$ by the operator $%
E_{k}$. Thus, for $f,g\in \mathcal{A}\left( \Omega \right) $,%
\begin{eqnarray*}
E_{k,\beta f}\left[ g\right] &=&Z_{k,\beta f}^{-1}E\left[ ge^{\beta f}\right]
\text{ with }Z_{k,\beta f}=E_{k}\left[ e^{\beta f}\right] \\
S_{k,f}\left( \beta \right) &=&\beta E_{k,\beta f}\left[ f\right] -\ln
Z_{k,\beta f}\text{ and} \\
\sigma _{k,\beta f}^{2}\left[ g\right] &=&E_{k,\beta f}\left[ \left(
g-E_{k,\beta f}\left[ g\right] \right) ^{2}\right] \text{.}
\end{eqnarray*}%
Then $E_{k,\beta f}\left[ g\right] $, $\sigma _{k,\beta f}^{2}\left[ g\right]
$ and $S_{k,f}\left( \beta \right) $ are members of $\mathcal{A}_{k}\left(
\Omega \right) $. Observe that $E_{k,\beta f}=E_{k,\beta f+f_{k}}$ for any $%
f_{k}\in \mathcal{A}_{k}\left( \Omega \right) $, a fact which will be
frequently used in the sequel.

\begin{lemma}
\label{Lemma Convexity of KL divergence}Let $h,g>0$ be bounded measurable
functions on $\Omega $. Then for any expectation $E$%
\begin{equation*}
E\left[ h\right] \ln \frac{E\left[ h\right] }{E\left[ g\right] }\leq E\left[
h\ln \frac{h}{g}\right] .
\end{equation*}
\end{lemma}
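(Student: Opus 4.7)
The plan is to reduce the inequality to the nonnegativity of Kullback--Leibler divergence (Gibbs' inequality) via a normalization trick. The quantities $E[h]$ and $E[g]$ are strictly positive by the assumption $h,g>0$, so I can normalize freely.

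First I would set $p := h/E[h]$ and $q := g/E[g]$, which satisfy $E[p] = E[q] = 1$. These can be viewed as densities of two probability measures absolutely continuous with respect to the measure associated with $E$. Their relative entropy satisfies $E[p\ln(p/q)] \geq 0$; this is the standard Gibbs inequality, which follows in one line from Jensen's inequality applied to the strictly convex function $-\ln$, namely
\[
E\left[p \ln \frac{q}{p}\right] \leq \ln E\left[p \cdot \frac{q}{p}\right] = \ln E[q] = 0.
\]

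Next I would expand $E[p \ln(p/q)] \geq 0$ using the definitions of $p$ and $q$:
\[
\frac{1}{E[h]} E\left[h \ln \frac{h}{g}\right] - \frac{1}{E[h]} E[h] \cdot \ln \frac{E[h]}{E[g]} \geq 0,
\]
where I have pulled the constant $\ln(E[h]/E[g])$ outside the expectation. Multiplying both sides by $E[h] > 0$ rearranges immediately to the claimed inequality.

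There is no real obstacle here beyond bookkeeping; the substantive content is entirely carried by Jensen's inequality for $-\ln$, and the algebraic manipulation is routine once the normalization is in place. Boundedness of $h$ and $g$ away from zero and infinity only serves to ensure that all expectations and logarithms are well defined and finite, so no delicate integrability argument is needed.
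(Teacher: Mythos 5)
Your proof is correct. It rests on the same essential mechanism as the paper's, namely Jensen's inequality applied under a tilted expectation, though the packaging differs slightly: the paper applies Jensen directly to the convex function $\Phi(t)=t\ln t$ under the $g$-weighted expectation $E_g[\cdot]=E[g\,\cdot]/E[g]$, while you normalize $h,g$ to densities $p,q$ and invoke nonnegativity of the Kullback--Leibler divergence, which is Jensen for the concave $\ln$ under the $p$-weighted expectation. These are two standard and entirely equivalent routes to the log-sum inequality, so nothing of substance separates them; your version is perhaps a touch more familiar from information theory, while the paper's avoids the intermediate normalization step.
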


\begin{proof}
Define an expectation functional $E_{g}$ by $E_{g}\left[ h\right] =E\left[ gh%
\right] /E\left[ g\right] $. The function $\Phi \left( t\right) =t\ln t$ is
convex for positive $t$, since $\Phi ^{\prime \prime }=1/t>0$. Thus, by
Jensen's inequality,%
\begin{equation*}
E\left[ h\right] \ln \frac{E\left[ h\right] }{E\left[ g\right] }=E\left[ g%
\right] \Phi \left( E_{g}\left[ \frac{h}{g}\right] \right) \leq E\left[ g%
\right] E_{g}\left[ \Phi \left( \frac{h}{g}\right) \right] =E\left[ h\ln 
\frac{h}{g}\right] .
\end{equation*}
\end{proof}

The heart of the entropy method is the following theorem, which asserts the
subadditivity of entropy.

\begin{theorem}
\label{Theorem Entropy subadditivity} 
\begin{equation}
S_{f}\left( \beta \right) \leq E_{\beta f}\left[ \sum_{k=1}^{n}S_{k,f}\left(
\beta \right) \right]  \label{Entropy subadditivity}
\end{equation}
\end{theorem}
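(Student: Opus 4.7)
The plan is to reduce the claim to the classical tensorization of entropy, then prove tensorization using a telescoping decomposition combined with Lemma~\ref{Lemma Convexity of KL divergence}. Setting $h = e^{\beta f}$ and writing $\mathrm{Ent}(h) := E[h\ln h] - E[h]\ln E[h]$ and $\mathrm{Ent}_k(h) := E_k[h\ln h] - E_k[h]\ln E_k[h]$, one computes directly from the definitions that $Z_{\beta f}\,S_f(\beta) = \mathrm{Ent}(h)$ and $Z_{k,\beta f}\,S_{k,f}(\beta) = \mathrm{Ent}_k(h)$. Because $S_{k,f}(\beta) \in \mathcal{A}_k(\Omega)$ is $x_k$-independent, one can pull $E_k$ across it: $E[S_{k,f}(\beta)\,e^{\beta f}] = E[S_{k,f}(\beta)\,Z_{k,\beta f}] = E[\mathrm{Ent}_k(h)]$. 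Multiplying the target inequality by $Z_{\beta f}$ therefore reduces it to the standard subadditivity statement $\mathrm{Ent}(h) \le \sum_{k=1}^n E[\mathrm{Ent}_k(h)]$.

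To prove this, I would introduce the conditional means $F_k := E_{k+1}\cdots E_n[h]$ for $k = 0, 1, \ldots, n$, so that $F_0 = E[h]$, $F_n = h$, and $F_{k-1} = E_k[F_k]$ by the product structure of $\mu$. Because $\ln F_k$ depends only on $x_1, \ldots, x_k$, integrating out the later coordinates yields $E[h \ln F_k] = E[F_k \ln F_k]$, and analogously $E[h \ln F_{k-1}] = E[F_{k-1} \ln F_{k-1}]$. Multiplying the telescoping identity $\ln h - \ln E[h] = \sum_{k=1}^n (\ln F_k - \ln F_{k-1})$ by $h$ and averaging then gives the exact decomposition $\mathrm{Ent}(h) = \sum_{k=1}^n E[\mathrm{Ent}_k(F_k)]$.

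To complete the proof I would bound $\mathrm{Ent}_k(F_k)$ by $E_{k+1,\ldots,n}[\mathrm{Ent}_k(h)]$. Applying Lemma~\ref{Lemma Convexity of KL divergence} with the expectation $E_{k+1}\cdots E_n$, numerator $h$, and denominator $g = E_k[h]$ (noting $E_{k+1,\ldots,n}[E_k[h]] = E_k[F_k]$ by Fubini) yields
\begin{equation*}
F_k \ln \frac{F_k}{E_k[F_k]} \le E_{k+1,\ldots,n}\bigl[h \ln h - h \ln E_k[h]\bigr].
\end{equation*}
Applying $E_k$ to both sides collapses the left-hand side to $\mathrm{Ent}_k(F_k)$; on the right one uses that $\ln E_k[h]$ is $x_k$-independent to get $E_k[h \ln E_k[h]] = E_k[h] \ln E_k[h]$, so the right-hand side becomes $E_{k+1,\ldots,n}[\mathrm{Ent}_k(h)]$. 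Taking a further $E_{1,\ldots,k-1}$ and summing over $k$ completes the proof.

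The main obstacle will be bookkeeping: one must carefully track which coordinates each quantity depends on, and repeatedly invoke the product structure of $\mu$ both to commute the conditional expectations $E_k$ and $E_{k+1,\ldots,n}$ and to pull $x_k$-independent factors across $E_k$. The crucial choice is the denominator $g = E_k[h]$ in the application of Lemma~\ref{Lemma Convexity of KL divergence}, as it is precisely what makes the normalizer $E_{k+1,\ldots,n}[g]$ match $E_k[F_k]$ on the left and the residual term reassemble as $E_{k+1,\ldots,n}[\mathrm{Ent}_k(h)]$ on the right.
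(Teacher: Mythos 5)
Your proof is correct and follows essentially the same route as the paper: both reduce to the tensorization inequality $\mathrm{Ent}(e^{\beta f})\le\sum_k E[\mathrm{Ent}_k(e^{\beta f})]$ and prove it by a Doob-martingale telescoping combined with Lemma~\ref{Lemma Convexity of KL divergence}. The only cosmetic differences are that you build the martingale by integrating out the \emph{last} coordinates (via $F_k=E_{k+1}\cdots E_n[h]$) where the paper integrates out the \emph{first} coordinates (via $E_1\cdots E_{k-1}[\rho]$), and you work with the unnormalized $h=e^{\beta f}$ and the entropy functional rather than the density $\rho=e^{\beta f}/Z_{\beta f}$.
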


\begin{proof}
Set $\rho =e^{\beta f}/Z_{\beta f}$ and write $\rho =\rho /E\left[ \rho %
\right] $ as a telescopic product to get%
\begin{eqnarray*}
E\left[ \rho \ln \frac{\rho }{E\left[ \rho \right] }\right] &=&E\left[ \rho
\ln \prod_{k=1}^{n}\frac{E_{1}...E_{k-1}\left[ \rho \right] }{%
E_{1}...E_{k-1}E_{k}\left[ \rho \right] }\right] \\
&=&\sum E\left[ E_{1}...E_{k-1}\left[ \rho \right] \ln \frac{E_{1}...E_{k-1}%
\left[ \rho \right] }{E_{1}...E_{k-1}\left[ E_{k}\left[ \rho \right] \right] 
}\right] \\
&\leq &\sum E\left[ \rho \ln \frac{\rho }{E_{k}\left[ \rho \right] }\right]
=E\left[ \sum E_{k}\left[ \rho \ln \frac{\rho }{E_{k}\left[ \rho \right] }%
\right] \right] ,
\end{eqnarray*}%
where we applied Lemma \ref{Lemma Convexity of KL divergence} to the
expectation functional $E_{1}...E_{k-1}$. From the definition of $\rho $ we
then obtain%
\begin{eqnarray*}
S_{f}\left( \beta \right) &=&\beta E_{\beta f}\left[ f\right] -\ln Z_{\beta
f}=E\left[ \rho \ln \frac{\rho }{E\left[ \rho \right] }\right] \leq E\left[
\sum E_{k}\left[ \rho \ln \frac{\rho }{E_{k}\left[ \rho \right] }\right] %
\right] \\
&=&E\left[ \sum_{k=1}^{n}\left( E_{k}\left[ \frac{e^{\beta f}}{Z_{\beta f}}%
\ln \frac{e^{\beta f}}{Z_{\beta f}}\right] -E_{k}\left[ \frac{e^{\beta f}}{%
Z_{\beta f}}\right] \ln E_{k}\left[ \frac{e^{\beta f}}{Z_{\beta f}}\right]
\right) \right] \\
&=&Z_{\beta f}^{-1}\sum_{k=1}^{n}E\left[ E_{k}\left[ e^{\beta f}\right]
S_{k,f}\left( \beta \right) \right] =Z_{\beta f}^{-1}\sum_{k=1}^{n}E\left[
e^{\beta f}S_{k,f}\left( \beta \right) \right] \text{ since }S_{k,f}\left(
\beta \right) \in \mathcal{A}_{k}\left( \Omega \right) \\
&=&E_{\beta f}\left[ \sum_{k=1}^{n}S_{k,f}\left( \beta \right) \right] .
\end{eqnarray*}%
\bigskip
\end{proof}

We combine this with the following fluctuation representation of entropy.

\begin{proposition}
\label{Proposition fluctuatiuon representation}We have for $\beta >0$%
\begin{equation*}
S_{f}\left( \beta \right) =\int_{0}^{\beta }\int_{t}^{\beta }\sigma
_{sf}^{2} \left[ f\right] ds~dt\text{ and }S_{k,f}\left( \beta \right)
=\int_{0}^{\beta }\int_{t}^{\beta }\sigma _{k,sf}^{2}\left[ f\right] ds~dt.
\end{equation*}
\end{proposition}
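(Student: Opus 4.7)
The plan is to derive the formula for $S_f(\beta)$ by differentiating the log-moment-generating function twice, then integrating, and finally rewriting the resulting single integral as an iterated integral via Fubini's theorem. The conditional case will be identical in structure, with $E$ replaced by $E_k$ throughout.

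First I would introduce $A(\beta) := \ln Z_{\beta f} = \ln E[e^{\beta f}]$, so that by definition $S_f(\beta) = \beta A'(\beta) - A(\beta)$ once I verify that $A'(\beta) = E_{\beta f}[f]$. This identity is a routine computation: differentiating under the integral sign gives $A'(\beta) = E[f e^{\beta f}]/E[e^{\beta f}] = E_{\beta f}[f]$, which is exactly the identity used in the proof of Lemma \ref{Lemma Selfbound}. A second differentiation yields
\begin{equation*}
A''(\beta) = \frac{E[f^{2}e^{\beta f}]}{E[e^{\beta f}]} - \left(\frac{E[fe^{\beta f}]}{E[e^{\beta f}]}\right)^{2} = E_{\beta f}[f^{2}] - (E_{\beta f}[f])^{2} = \sigma_{\beta f}^{2}[f].
\end{equation*}

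Next I would differentiate $S_f$. Using $S_f(\beta) = \beta A'(\beta) - A(\beta)$, the product rule and the previous step give
\begin{equation*}
S_f'(\beta) = A'(\beta) + \beta A''(\beta) - A'(\beta) = \beta\, \sigma_{\beta f}^{2}[f].
\end{equation*}
Combined with $S_f(0) = 0$ (which follows because $\ln Z_{0} = 0$ and $\beta E_{\beta f}[f] \to 0$ as $\beta \to 0$, by boundedness of $f$), integrating from $0$ to $\beta$ yields $S_f(\beta) = \int_0^\beta s\, \sigma_{sf}^{2}[f]\, ds$. A swap of the order of integration gives
\begin{equation*}
\int_0^\beta\!\int_t^\beta \sigma_{sf}^{2}[f]\, ds\, dt = \int_0^\beta \sigma_{sf}^{2}[f] \int_0^s dt\, ds = \int_0^\beta s\, \sigma_{sf}^{2}[f]\, ds,
\end{equation*}
establishing the claimed representation.

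For the conditional identity, I would repeat the same argument inside $\mathcal{A}_k(\Omega)$: defining $A_k(\beta) = \ln Z_{k,\beta f} = \ln E_k[e^{\beta f}]$ pointwise, the same two differentiation steps go through because $E_k$ behaves as an expectation with respect to $\mu_k$ at each fixed $\mathbf{x}$ — giving $A_k'(\beta) = E_{k,\beta f}[f]$ and $A_k''(\beta) = \sigma_{k,\beta f}^{2}[f]$ — and then $S_{k,f}'(\beta) = \beta\, \sigma_{k,\beta f}^{2}[f]$ with $S_{k,f}(0) = 0$, so the same Fubini swap finishes. The only mild subtlety is the exchange of differentiation and the $\mu_k$-integral inside $E_k$, which is justified by uniform boundedness of $f$ (all integrands are bounded on compact $\beta$-intervals), and the Fubini step is trivial since the integrand is nonnegative. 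No essential obstacle arises; the calculation is standard and the main thing to be careful about is the boundary behaviour at $\beta = 0$ and writing the iterated integral in the right order.
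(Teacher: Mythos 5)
Your proof is correct and rests on the same two facts the paper uses — that $\frac{d}{d\beta}\ln Z_{\beta f}=E_{\beta f}[f]$ and $\frac{d}{d\beta}E_{\beta f}[f]=\sigma_{\beta f}^2[f]$ — followed by the fundamental theorem of calculus; the paper merely organizes the bookkeeping differently (writing $\beta E_{\beta f}[f]$ and $\ln Z_{\beta f}$ each as iterated integrals and subtracting, rather than differentiating $S_f$ once and then applying Fubini). The conditional case is handled in the same way in both arguments, so this is essentially the paper's proof.
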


\begin{proof}
Using $\left( d/d\beta \right) E_{\beta f}\left[ f\right] =\sigma _{\beta
f}^{2}\left[ f\right] $ and the fundamental theorem of calculus we obtain
the formulas%
\begin{eqnarray*}
\beta E_{\beta f}\left[ f\right] &=&\int_{0}^{\beta }E_{\beta f}\left[ f%
\right] dt=\int_{0}^{\beta }\left( \int_{0}^{\beta }\sigma _{sf}^{2}\left[ f%
\right] ds+E\left[ f\right] \right) dt \\
\text{and }\ln Z_{\beta f} &=&\int_{0}^{\beta }E_{tf}\left[ f\right]
dt=\int_{0}^{\beta }\left( \int_{0}^{t}\sigma _{sf}^{2}\left[ f\right] ds+E%
\left[ f\right] \right) dt,
\end{eqnarray*}%
which we subtract to obtain%
\begin{eqnarray*}
S_{f}\left( \beta \right) &=&\beta E_{\beta f}\left[ f\right] -\ln Z_{\beta
f}=\int_{0}^{\beta }\left( \int_{0}^{\beta }\sigma _{sf}^{2}\left[ f\right]
ds-\int_{0}^{t}\sigma _{sf}^{2}\left[ f\right] ds\right) dt \\
&=&\int_{0}^{\beta }\left( \int_{t}^{\beta }\sigma _{sf}^{2}\left[ f\right]
ds\right) dt.
\end{eqnarray*}%
The same argument gives the second inequality.
\end{proof}

Combining Theorem \ref{Theorem Entropy subadditivity} and Proposition \ref%
{Proposition fluctuatiuon representation} we obtain the following, very
useful inequality (Theorem 7 in \cite{Maurer 2012}) 
\begin{equation}
S_{f}\left( \beta \right) \leq E_{\beta f}\left[ \sum_{k=1}^{n}\int_{0}^{%
\beta }\int_{t}^{\beta }\sigma _{k,sf}^{2}\left[ f\right] ds~dt\right] ,
\label{useful inequality}
\end{equation}%
which leads to a number of concentration inequalities, when used together
with Lemma \ref{Theorem basic Herbst argument}. The celebrated "bounded
difference inequality" (see e.g. McDiarmid \cite{McDiarmid 1998}, Theorem
3.7), for example, is an almost immediate consequence. We will also use a
simple variational bound on the conditional thermal variance: 
\begin{equation}
\sigma _{k,\beta f}^{2}\left[ f\right] \leq E_{k,\beta f}\left[ \left(
f-f_{k}\right) ^{2}\right] =E_{k,\beta \left( f-f_{k}\right) }\left[ \left(
f-f_{k}\right) ^{2}\right] \text{, }\forall f_{k}\in \mathcal{A}_{k}\left(
\Omega \right) \text{.}  \label{Trivial variance bound}
\end{equation}%
\bigskip

We need two applications of (\ref{useful inequality}). Recall the definition
of the real function $\psi \left( t\right) :=te^{t}-e^{t}+1$.

\begin{proof}[Proof of Lemma \protect\ref{Lemma Bennett variance sum}]
For any $k\in \left\{ 1,...,n\right\} ,\beta >0$, letting $f_{k}=E_{k}f$ in (%
\ref{Trivial variance bound}),%
\begin{eqnarray*}
\sigma _{k,\beta f}^{2}\left( f\right) &\leq &E_{k,\beta \left(
f-E_{k}f\right) }\left[ \left( f-E_{k}f\right) ^{2}\right] \\
&=&\frac{E_{k}\left[ \left( f-E_{k}f\right) ^{2}e^{\beta \left(
f-E_{k}f\right) }\right] }{E_{k}\left[ e^{\beta \left( f-E_{k}f\right) }%
\right] } \\
&\leq &E_{k}\left[ \left( f-E_{k}f\right) ^{2}e^{\beta \left(
f-E_{k}f\right) }\right] \text{ use Jensen on denominator} \\
&\leq &e^{\beta }E_{k}\left[ \left( f-E_{k}f\right) ^{2}\right] \text{ using 
}f-E_{k}f\leq 1 \\
&=&e^{\beta }\sigma _{k}^{2}\left( f\right) \text{.}
\end{eqnarray*}%
Thus with (\ref{useful inequality})%
\begin{eqnarray*}
S_{f}\left( \beta \right) &\leq &E_{\beta f}\left[ \sum_{k=1}^{n}\int_{0}^{%
\beta }\int_{t}^{\beta }\sigma _{k,sf}^{2}\left[ f\right] ds~dt\right] \leq
\left( \int_{0}^{\beta }\int_{t}^{\beta }e^{s}ds~dt\right) ~E_{\beta f}\left[
\Sigma ^{2}\left( f\right) \right] \\
&=&\left( \beta e^{\beta }-e^{\beta }+1\right) ~E_{\beta f}\left[ \Sigma
^{2}\left( f\right) \right] .
\end{eqnarray*}
\end{proof}

Recall the definition of the operator $D:\mathcal{A\left( \Omega \right)
\rightarrow A}\left( \Omega \right) $ by%
\begin{equation*}
Dg=\sum_{k}\left( g-\inf_{y\in \Omega _{k}}S_{y}^{k}g\right) ^{2}\text{, for 
}g\in \mathcal{A}\left( \Omega \right) .
\end{equation*}

\begin{proof}[Proof of Lemma \protect\ref{Lemma Entropy bound Upper}]
We abbreviate $\inf_{y\in \Omega _{k}}S_{y}^{k}f$ to $\inf_{k}f$. Replacing $%
f_{k}$ by $\inf_{k}f$ in (\ref{Trivial variance bound}) we get 
\begin{equation*}
\sigma _{k,\beta f}^{2}\left[ f\right] \leq E_{k,\beta f}\left[ \left(
f-\inf_{k}f\right) ^{2}\right] =E_{k,\beta \left( f-\inf_{k}f\right) }\left[
\left( f-\inf_{k}f\right) ^{2}\right] .
\end{equation*}%
We now claim that the right hand side above is a non-decreasing function of $%
\beta $. Too see this write $h=f-\inf_{k}f$ and define a real function $\xi $
by $\xi \left( t\right) =\left( \max \left\{ t,0\right\} \right) ^{2}$. By a
straighforward computation we obtain%
\begin{eqnarray*}
\frac{d}{d\beta }E_{k,\beta \left( f-\inf_{k}f\right) }\left[ \left(
f-\inf_{k}f\right) ^{2}\right]  &=&\frac{d}{d\beta }E_{\beta h}\left[ \xi
\left( h\right) \right]  \\
&=&E_{\beta h}\left[ \xi \left( h\right) h\right] -E_{\beta h}\left[ \xi
\left( h\right) \right] E_{\beta h}\left[ h\right] \geq 0,
\end{eqnarray*}%
where the last inequality uses the well known fact that for $h\geq 0$ and
any expectation $E\left[ \xi \left( h\right) h\right] \geq E\left[ \xi
\left( h\right) \right] E\left[ h\right] $ whenever $\xi $ is a
nondecreasing function. This establishes the claim.

Using (\ref{useful inequality}) it follows that%
\begin{eqnarray*}
S_{f}\left( \beta \right)  &\leq &E_{\beta f}\left[ \sum_{k=1}^{n}\int_{0}^{%
\beta }\int_{t}^{\beta }\sigma _{k,sf}^{2}\left[ f\right] ds~dt\right]  \\
&\leq &E_{\beta f}\left[ \sum_{k=1}^{n}\int_{0}^{\beta }\int_{t}^{\beta
}E_{k,sf}\left[ \left( f-\inf_{k}f\right) ^{2}\right] ds~dt\right]  \\
&\leq &\frac{\beta ^{2}}{2}E_{\beta f}\left[ \sum_{k=1}^{n}E_{k,\beta
f}\left( f-\inf_{k}f\right) ^{2}\right] =\frac{\beta ^{2}}{2}E_{\beta f}%
\left[ \sum_{k=1}^{n}\left( f-\inf_{k}f\right) ^{2}\right] ,
\end{eqnarray*}%
where we used the identity $E_{\beta f}E_{k,\beta f}=E_{\beta f}$.
\end{proof}

\end{document}